\newtheorem{theorem}{Theorem}
\theoremstyle{plain}
\newtheorem{corollary}{Corollary}
\newtheorem{definition}{Definition}
\newtheorem{lemma}{Lemma}
\newtheorem{proposition}{Proposition}
\newtheorem{remark}{Remark}
\DeclareMathOperator{\Div}{div}
\numberwithin{equation}{section}
 \numberwithin{theorem}{section}
 \numberwithin{proposition}{section}
 \numberwithin{remark}{section}
 \numberwithin{definition}{section}
 \numberwithin{lemma}{section}
 \numberwithin{corollary}{section}
 \numberwithin{example}{section}
 \numberwithin{claim}{section}
\begin{document}
\title[Nonlocal Stokes-Vlasov system]{Nonlocal Stokes-Vlasov system:
Existence and deterministic homogenization results}
\author{Gabriel Nguetseng}
\address{G. Nguetseng, Department of Mathematics, University of Yaounde 1,
P.O. Box 812, Yaounde, Cameroon}
\email{nguetseng@yahoo.fr, nguetseng@uy1.uninet.cm}
\author{Celestin Wafo Soh}
\address{C. Wafo Soh, Department of Mathematics and Statistical Sciences,
College of Science, Engineering and Technology, Jackson State University,
JSU Box 17610, 1400 J R Lynch St., Jackson, MS 39217, USA}
\email{celestin.wafo\_soh@jsums.edu}
\author{Jean Louis Woukeng}
\address{J. L. Woukeng, Department of Mathematics and Computer Science,
University of Dschang, P.O. Box 67, Dschang, Cameroon}
\email{jwoukeng@yahoo.fr}
\date{June, 2014}
\subjclass[2000]{35B40, 35Q35, 46J10, 76T20, 76D05}
\keywords{Nonlocal Stokes-Vlasov system, Deterministic homogenization,
Introverted algebras with mean value, sigma-convergence, Convolution}

\begin{abstract}
Our work deals with the systematic study of the coupling between the
nonlocal Stokes system and the Vlasov equation. The coupling is due to a
drag force generated by the fluid-particles interaction. We establish the
existence of global weak solutions for the nonlocal Stokes-Vlasov system in
dimensions two and three without resorting to assumptions on higher-order
velocity moments of the initial distribution of particles. We then study by
the means of the sigma-convergence method, the asymptotic behavior in the
general deterministic framework, of the sequence of solutions to the
nonlocal Stokes-Vlasov system. In guise of illustration, we provide several
physical applications of the homogenization result including periodic,
almost-periodic and weakly almost-periodic settings.
\end{abstract}

\maketitle

\section{Introduction}

This paper is concerned with the rigorous asymptotic analysis of a system of
integro-differential equations modeling the evolution of a cloud of
particles immersed in an incompressible viscous fluid. We neglect
particle-particle collisions in such a way that at the microscale level,
particles' distribution, $f_{\varepsilon }$, satisfies the Vlasov equation 
\begin{equation}
\frac{\partial f_{\varepsilon }}{\partial t}+\varepsilon v\cdot \nabla
f_{\varepsilon }+\Div_{v}\left( (\boldsymbol{u}_{\varepsilon
}-v)f_{\varepsilon }\right) =0\mbox{ in }Q\times \mathbb{R}^{N}  \label{1.1}
\end{equation}%
in which the non-dimensional small parameter $\varepsilon >0$ represents the
scale of inhomogeneities, $\boldsymbol{u}_{\varepsilon }(t,x)$ is fluid's
velocity at time $t$ and position $x$, $f_{\varepsilon }(t,x,v)dv$ is
roughly the odd of finding a particle with velocity $v$ near $x$ at time $t$%
, $Q=(0,T)\times \Omega $, $\Omega \subset \mathbb{\ R}^{N}$ ($N=2,3$) is a
bounded domain with smooth boundary , $T$ is a given positive real number
representing the final time, the operator $\nabla $ (resp. $\Div_{v}$)
denotes the gradient operator with respect to $x\in \Omega $ (resp. the
divergence operator in $\mathbb{R}^{N}$ with respect to $v\in \mathbb{R}^{N}$%
). We posit that the cloud of particles is highly diluted in such a way that
we may assume that the density of the fluid is constant. Thus, the particles
evolve in a Newtonian fluid governed by the Stokes system. The viscoelastic
constitutive law associated to the momentum balance and the fluid-particles
interaction give rise to the following Stokes system: 
\begin{equation}
\frac{\partial \boldsymbol{u}_{\varepsilon }}{\partial t}-\Div\left(
A_{0}^{\varepsilon }\nabla \boldsymbol{u}_{\varepsilon
}+\int_{0}^{t}A_{1}^{\varepsilon }(t-\tau ,x)\nabla \boldsymbol{u}%
_{\varepsilon }(\tau ,x)d\tau \right) +\nabla p_{\varepsilon }=-\int_{%
\mathbb{R}^{N}}(\boldsymbol{u}_{\varepsilon }-v)f_{\varepsilon }dv\mbox{ in }%
Q,  \label{1.2}
\end{equation}%
\begin{equation}
\Div\boldsymbol{u}_{\varepsilon }=0\mbox{ in }Q,  \label{1.3}
\end{equation}%
where $p_{\varepsilon }$ is pressure and the oscillating viscosities $%
A_{0}^{\varepsilon }$ and $A_{1}^{\varepsilon }$ are defined by $%
A_{i}^{\varepsilon }(t,x)=A_{i}\left( t,x,\frac{t}{\varepsilon },\frac{x}{%
\varepsilon }\right) $ ($(t,x)\in Q$ and $i=0,1$), with the $A_{i}$s
constrained as follows:

\begin{itemize}
\item[(\textbf{A1})] $A_{i}\in \mathcal{C}(\overline{Q};L^{\infty }(\mathbb{R%
}_{y,\tau }^{N+1})^{N^{2}})$ are symmetric matrices with $A_{0}$ satisfying
the following condition: 
\begin{equation*}
A_{0}\xi \cdot \xi \geq \alpha \left\vert \xi \right\vert ^{2}%
\mbox{ for all
}\xi \in \mathbb{R}^{N}\mbox{ and a.e. in }\overline{Q}\times \mathbb{R}%
_{y,\tau }^{N+1}
\end{equation*}%
with $\alpha >0$ a given constant not depending on $x,t,y,\tau $ and $\xi $.
\end{itemize}

The system (\ref{1.1})-(\ref{1.3}) is supplemented with the initial data 
\begin{equation}
\boldsymbol{u}_{\varepsilon }(0,x)=\boldsymbol{u}^{0}(x),\ f_{\varepsilon
}(0,x,v)=f^{0}(x,v),\ x\in \Omega ,v\in \mathbb{R}^{N},  \label{1.4}
\end{equation}%
and the boundary conditions 
\begin{equation}
\boldsymbol{u}_{\varepsilon }=0\mbox{ on }\partial \Omega \mbox{ and }%
f_{\varepsilon }(t,x,v)=f_{\varepsilon }(t,x,v^{\ast })\mbox{ for }x\in
\partial \Omega \mbox{ with }v\cdot \nu (x)<0,  \label{1.5}
\end{equation}%
where $v^{\ast }=v-2(v\cdot \nu (x))\nu (x)$ is the specular velocity, $\nu
(x)$ is the outward normal to $\Omega $ at $x\in \partial \Omega $ and the
functions $\boldsymbol{u}^{0}$ and $f^{0}$ are chosen as follows:

\begin{itemize}
\item[(\textbf{A2})] $\boldsymbol{u}^{0}\in L^{2}(\Omega )^{N}$ with $\Div%
\boldsymbol{u}^{0}=0$, $f^{0}\geq 0$, $f^{0}\in L^{\infty }(\Omega \times 
\mathbb{R}^{N})\cap L^{1}(\Omega \times \mathbb{R}^{N})$.
\end{itemize}

It is opportune to stress that we have not imposed the constraint $|v|^5f^0
\in L^1(\Omega \times \mathbb{R}^N)$ as suggested by Yu \cite{JMPA}. Indeed,
we are going to see that the Lemma 2.1 of Hamdache \cite{Hamdache} renders
such assumption superfluous provided appropriate regularization and
truncation are performed. In particular, the truncation of the initial
distribution of particles in the $v$-direction relieves us from the
assumption on moments. Contrary to contemporary approaches, ours permeates
initial distribution of the form $\alpha(x)/ (1 + |v|^2)$, where $\alpha \ge
0$ and $\alpha \in L^{\infty}(\Omega)$.

The system (\ref{1.1})-(\ref{1.5}) arises in several applications comprising
the modeling of reactive flows of sprays \cite{Amsden, Oran}, atmospheric
pollution modeling \cite{Greenberg}, and waste water treatment \cite{Gokhale}%
. When there is no particle evolving in the fluid (i.e. when $f_{\varepsilon
}\equiv 0$) the asymptotic analysis of Eqs. (\ref{1.1})-(\ref{1.5}) reduces
to the study of the asymptotics of (\ref{1.2})-(\ref{1.5}) (with of course $%
f_{\varepsilon }\equiv 0$ therein), which has been very recently undertaken
by Woukeng \cite{M2AS} in the almost periodic framework. 

There is a fairly extensive literature concerned with the asymptotic
analysis of the Vlasov equations coupled with other equations: the works 
\cite{Crous, Frenod1, Frenod} deal with the periodic homogenization of the
Vlasov equations, the main thrust of the papers \cite{Frenod, Jiang} is the
asymptotic analysis of the coupling Vlasov-Poisson system, Mellet et al. 
\cite{Mellet2} is concerned with the homogenization of the coupling
Vlasov-Fokker-Planck/Compressible Navier-Stokes system, and Goudon et al. 
\cite{Goudon1, Goudon2} treats the asymptotic behavior of the coupling
Vlasov-Navier-Stokes equations.

In this work, our objective is twofold: 1) we state and prove an existence
result for the system (\ref{1.1})-(\ref{1.5}) without any assumption on the $%
v$-moments of the initial condition $f^{0}$; 2) we carry out the
homogenization of (\ref{1.1})-(\ref{1.5}) under suitable structural
assumptions on the coefficients of the operators involved in (\ref{1.2}). 
These assumptions cover a wide set of concrete behaviors such as the
classical periodicity assumption, the almost periodicity hypothesis, weakly
almost periodicity hypothesis and much more. In order to achieve our goal,
we shall use the concept of \emph{sigma-convergence} \cite{Hom1, NA} which
is roughly a formulation of the well-known two-scale convergence method \cite%
{NG} in the context of \emph{algebras with mean value} \cite{Jikov, NA,
Zhikov4}. This is the so-called \emph{deterministic homogenization} theory
which includes the periodic homogenization theory as a special case. As far
as we know, our results are new in the context of general deterministic
homogenization since the available results deal with either periodic
homogenization \cite{Crous, Frenod1, Frenod, Jiang}\ or rely on the concept
of \emph{relative entropy} \cite{Goudon1, Goudon2, Mellet2, Poupaud1,
Poupaud2, Yau}.

The remainder of this paper is structured as follows. In Section 2, we state
and outline the proof of an existence result for our $\varepsilon $-problem.
We also derive some a priori estimates that will be useful in next sections.
Section 3 deals with the concept of $\Sigma $-convergence and its relation
with convolution. We first recall some useful tools related to algebras with
mean value and define convolution over the spectrum of an algebra with mean
value. In Section 4, we state and prove the main homogenization result. In
Section 5, we give some concrete situations in which the result of Section 4
is valid. Finally, we summarize our findings in Section 6.

In the sequel, unless otherwise specified, the field of scalars acting on
vector spaces is the set of real numbers and scalar functions are
real-valued. If $X$ and $F$ respectively denote a locally compact space and
a Banach space, then we respectively write $\mathcal{C}(X;F)$ and \textrm{BUC%
}$(X;F)$ for continuous mappings of $X$ into $F$ and bounded uniformly
continuous mappings of $X$ into $F$. We shall always assume that \textrm{BUC}%
$(X;F)$ is equipped with the supremum norm $\left\Vert u\right\Vert _{\infty
}=\sup_{x\in X}\left\Vert u(x)\right\Vert $ in which $\left\Vert \cdot
\right\Vert $ stands for the norm of $F$. In the notations for functions
space, we shall omit the codomain when it is $\mathbb{R}$. To wit, $\mathcal{%
C}(X)$ will stand for $\mathcal{C}(X;\mathbb{R})$ and \textrm{BUC}$(X)$ will
be a shorthand notation for \textrm{BUC}$(X;\mathbb{R})$. Likewise, the
usual Lebesgue spaces $L^{p}(X;\mathbb{R})$ and $L_{\mbox{loc}}^{p}(X;%
\mathbb{R})$ where $X$ is equipped with a positive Radon measure, are
respectively abbreviated $L^{p}(X)$ and $L_{\mbox{loc}}^{p}(X)$. Finally, it
will always be assumed that the Euclidian space $\mathbb{R}^{N}\;(N\geq 1)$
and its open sets are each endowed with Lebesgue measure $dy=dy_{1}\ldots
dy_{N}$.

\section{Existence result and basic a priori estimates}

In this part, we focus on the existence of solutions to our $\varepsilon$%
-problem. We shall define a regularized problem, solve it and show that the
limit (in a sense to be specified) of the solution of this regularized
problem solves our $\varepsilon$-problem. In order to implement our program,
we shall establish some \emph{a priori} estimates in some classical
functional spaces we introduce below. These estimates will be used in
compactness arguments at various stages of this work.

The main classical spaces involved in the mathematical study of
incompressible fluid flows are spaces connected to kinetic energy, entropy,
the boundary conditions and the conservation of mass. These spaces will be
denoted $V$ and $H$ and they may be respectively constructed as closure of 
$\mathcal{V}=\{\boldsymbol{\varphi }\in \mathcal{C}_{0}^{\infty }(\Omega
)^{N}:\Div\boldsymbol{\varphi }=0\}$ in $H^{1}(\Omega )^{N}$ ($H^{1}(\Omega )
$ the usual Sobolev space on $\Omega $), and $L^{2}(\Omega )^{N}$. Since $%
\partial \Omega $ is smooth, we have that $V=\{\boldsymbol{u}\in
H_{0}^{1}(\Omega )^{N}:\Div\boldsymbol{u}=0\}$ and $H=\{\boldsymbol{u}\in
L^{2}(\Omega )^{N}:\Div\boldsymbol{u}=0$ and $\boldsymbol{u}\cdot \nu =0$ on 
$\partial \Omega \}$, $\nu $ being the outward unit vector normal to $%
\partial \Omega $. We denote by $\left( \cdot ,\cdot \right) $ the inner
product in $H$, and by $\cdot $ the scalar product in $\mathbb{R}^{N}$. The
associated norm in $\mathbb{R}^{N}$ is denoted by $\left\vert \cdot
\right\vert $. All duality pairing are denoted by $\left\langle \cdot ,\cdot
\right\rangle $ without referring to spaces involved. Such spaces will be
understood from the context. We set 
\begin{equation*}
\Sigma ^{\pm }=\{(x,v)\in \partial \Omega \times \mathbb{R}^{N}:\pm v\cdot
\nu (x)>0\}.
\end{equation*}

With the functional framework fixed, we can now specify the type of
solutions we will be seeking.

\begin{definition}
\label{d2.1}\emph{A pair }$(\boldsymbol{u}_{\varepsilon },f_{\varepsilon })$%
\emph{\ (for fixed }$\varepsilon >0$\emph{) is called a }weak solution\emph{%
\ to the system (\ref{1.1})-(\ref{1.5}) if the following conditions are
satisfied:}

\begin{itemize}
\item $\boldsymbol{u}_{\varepsilon }\in L^{\infty }(0,T;H)\cap
L^{2}(0,T;V)\cap \mathcal{C}([0,T];V^{\prime });$

\item $f_{\varepsilon }(t,x,v)\geq 0$\emph{\ for any }$(t,x,v)\in Q\times 
\mathbb{R}^{N};$

\item $f_{\varepsilon }\in L^{\infty }(0,T;L^{\infty }(\Omega \times \mathbb{%
R}^{N})\cap L^{1}(\Omega \times \mathbb{R}^{N}));$

\item $f_{\varepsilon }\left\vert v\right\vert ^{2}\in L^{\infty
}(0,T;L^{1}(\Omega \times \mathbb{R}^{N}));$

\item \emph{for all }$\phi \in \mathcal{C}^{1}([0,T]\times \Omega \times 
\mathbb{R}^{N})$\emph{\ with compact support in }$v$\emph{, such that }$\phi
(T,\cdot ,\cdot )=0$\emph{\ and }$\phi (t,x,v)=\phi (t,x,v^{\ast })$\emph{\
on }$\left( 0,T\right) \times \Sigma ^{+}$\emph{, we have }%
\begin{equation}
\int_{Q\times \mathbb{R}^{N}}f_{\varepsilon }\left( \frac{\partial \phi }{%
\partial t}+\varepsilon v\cdot \nabla \phi +(\boldsymbol{u}_{\varepsilon
}-v)\cdot \nabla _{v}\phi \right) dxdvdt+\int_{\Omega \times \mathbb{R}%
^{N}}f^{0}\phi (0,x,v)dxdv=0;  \label{2.1}
\end{equation}

\item \emph{for all }$\psi \in \mathcal{C}^{1}([0,T]\times \Omega )^{N}$%
\emph{\ with }$\Div\psi =0$\emph{\ and }$\psi (T,\cdot )=0,$%
\begin{eqnarray}
\int_{Q}\left( -\boldsymbol{u}_{\varepsilon }\cdot \frac{\partial \psi }{%
\partial t}+(A_{0}^{\varepsilon }\nabla \boldsymbol{u}_{\varepsilon
}+A_{1}^{\varepsilon }\ast \nabla \boldsymbol{u}_{\varepsilon })\cdot \nabla
\psi \right) \;dxdt &=&-\int_{Q\times \mathbb{R}^{N}}f_{\varepsilon }(%
\boldsymbol{u}_{\varepsilon }-v)\cdot \psi \;dxdtdv  \label{2.2} \\
&&+\int_{\Omega }\boldsymbol{u}^{0}\cdot \psi (0,x)\;dx.  \notag
\end{eqnarray}
\end{itemize}
\end{definition}

In Eq. (\ref{2.2}) $A_{1}^{\varepsilon }\ast \nabla \boldsymbol{u}%
_{\varepsilon }$ stands for the function defined by 
\begin{equation*}
(A_{1}^{\varepsilon }\ast \nabla \boldsymbol{u}_{\varepsilon
})(t,x)=\int_{0}^{t}A_{1}^{\varepsilon }(t-\tau ,x)\nabla \boldsymbol{u}%
_{\varepsilon }(\tau ,x)\;d\tau \mbox{ whenever } (t,x)\in Q.
\end{equation*}%
The main result of this section is summarized in the following theorem.

\begin{theorem}
\label{t2.1}Under assumption \emph{(\textbf{A1})-(\textbf{A2})} and for any
fixed $\varepsilon >0$, there exists a weak solution $(\boldsymbol{u}%
_{\varepsilon },f_{\varepsilon })$ of \emph{(\ref{1.1})-(\ref{1.5})} in the
sense of Definition \emph{\ref{d2.1}}. There also exists a $p_{\varepsilon
}\in L^{2}(0,T;L^{2}(\Omega )/\mathbb{R})$ such that \emph{(\ref{1.2})} is
satisfied.
\end{theorem}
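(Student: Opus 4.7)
The plan is to construct $(\boldsymbol{u}_{\varepsilon},f_{\varepsilon})$ by a double regularization scheme followed by compactness. For parameters $n\in\mathbb{N}$ and $\delta>0$, I would truncate the initial particle distribution by $f^{0}_{n}(x,v)=f^{0}(x,v)\chi_{n}(v)$, where $\chi_{n}$ is a smooth cutoff equal to $1$ on $\{|v|\le n\}$ and vanishing outside $\{|v|\le n+1\}$, and mollify the fluid velocity appearing in the drag coupling and in the transport field of (\ref{1.1}) via $\boldsymbol{u}\mapsto \rho_{\delta}\ast_{x}\boldsymbol{u}$. For the resulting regularized problem, existence follows from Schauder's fixed-point theorem applied to the map that sends $\boldsymbol{u}$ first to the solution $f_{[\boldsymbol{u}]}$ of the (now linear, Lipschitz-drift) Vlasov equation with specular reflection, computed via characteristics, and then to the unique solution of the linear Stokes system (\ref{1.2}) with memory kernel $A_{1}^{\varepsilon}$ and right-hand side driven by $f_{[\boldsymbol{u}]}$, constructed by a Galerkin scheme in $V$.

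The next step is to derive a priori estimates that are uniform in $(n,\delta)$. Positivity and all $L^{p}$ norms of $f_{\varepsilon}$ are preserved along the Vlasov flow because $\Div_{v}(\boldsymbol{u}_{\varepsilon}-v)=-N$ is constant. Testing (\ref{1.2}) against $\boldsymbol{u}_{\varepsilon}$, (\ref{1.1}) against $\tfrac{1}{2}|v|^{2}$, and summing yields the coupled energy identity
\begin{equation*}
\tfrac{d}{dt}\Bigl(\tfrac{1}{2}\|\boldsymbol{u}_{\varepsilon}\|_{H}^{2}+\tfrac{1}{2}\!\int_{\Omega\times\mathbb{R}^{N}}\! f_{\varepsilon}|v|^{2}\,dxdv\Bigr)+\int_{\Omega}(A_{0}^{\varepsilon}\nabla\boldsymbol{u}_{\varepsilon}+A_{1}^{\varepsilon}\ast\nabla\boldsymbol{u}_{\varepsilon})\cdot\nabla\boldsymbol{u}_{\varepsilon}\,dx+\!\int_{\Omega\times\mathbb{R}^{N}}\!f_{\varepsilon}|\boldsymbol{u}_{\varepsilon}-v|^{2}\,dxdv=0.
\end{equation*}
From this, the coercivity assumption in (\textbf{A1}), Young's inequality applied to the memory convolution, and Gronwall's lemma yield uniform bounds of $\boldsymbol{u}_{\varepsilon}$ in $L^{\infty}(0,T;H)\cap L^{2}(0,T;V)$ and of $f_{\varepsilon}|v|^{2}$ in $L^{\infty}(0,T;L^{1}(\Omega\times\mathbb{R}^{N}))$. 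At this precise step Hamdache's Lemma 2.1 is crucial: it guarantees that the boundary contribution arising from the specular reflection when multiplying (\ref{1.1}) by $|v|^{2}/2$ and integrating in $x$ vanishes, so the identity above holds without any condition on higher-order $v$-moments of $f^{0}$, which is the main novelty relative to Yu's approach.

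The final step is to pass to the limit $(n,\delta)\to(\infty,0)$. Weak-$\ast$ limits in the energy spaces are extracted directly from the uniform bounds, and the linear terms pass to the limit by lower semicontinuity. The main obstacle is the nonlinear coupling term $\int_{\mathbb{R}^{N}}(\boldsymbol{u}_{\varepsilon}-v)f_{\varepsilon}\,dv$, a product of two weakly converging sequences over an unbounded velocity domain. To handle it I would use (\ref{1.2}) to bound $\partial_{t}\boldsymbol{u}_{\varepsilon}$ in $L^{2}(0,T;V')$ and invoke the Aubin--Lions lemma to upgrade $\boldsymbol{u}_{\varepsilon}$ to strong convergence in $L^{2}(0,T;H)$, while the uniform $|v|^{2}$-moment bound furnishes the equi-integrability in $v$ needed to pass to the limit in $\int v f_{\varepsilon}\,dv$, the velocity tails being controlled thanks to the truncation of $f^{0}_{n}$. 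Once (\ref{2.1})--(\ref{2.2}) are validated, the pressure $p_{\varepsilon}\in L^{2}(0,T;L^{2}(\Omega)/\mathbb{R})$ is recovered by a standard application of de Rham's theorem to (\ref{1.2}).
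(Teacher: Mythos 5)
Your overall architecture — truncate and mollify, solve the linearized Vlasov problem by characteristics and the linear Stokes problem for a frozen velocity, close the loop with Schauder, derive the coupled energy identity, use Aubin--Lions and equi-integrability to pass to the limit, and recover the pressure by de Rham/Temam — is essentially the paper's. But there is a genuine gap at the step you declare ``uniform in $(n,\delta)$''. After integration in time and Gronwall, your energy inequality carries the term $\int_{\Omega \times \mathbb{R}^{N}}\left\vert v\right\vert ^{2}f_{n}^{0}\,dxdv$ on its right-hand side. Under (\textbf{A2}) the datum $f^{0}$ is only in $L^{1}\cap L^{\infty }$, with no velocity moments assumed, and with your truncation $f_{n}^{0}=f^{0}\chi _{n}$ this quantity increases to $\int \left\vert v\right\vert ^{2}f^{0}\,dxdv$, which may be infinite. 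Hence the bounds you claim for $\boldsymbol{u}_{\varepsilon }$ in $L^{\infty }(0,T;H)\cap L^{2}(0,T;V)$ and for $f_{\varepsilon }\left\vert v\right\vert ^{2}$ in $L^{\infty }(0,T;L^{1})$ are not uniform in $n$, and the compactness argument collapses precisely on the point the theorem is meant to handle without moment hypotheses: as written, your scheme needs $\left\vert v\right\vert ^{2}f^{0}\in L^{1}$, i.e. the kind of assumption the result avoids. The paper's construction is built around this difficulty: the regularized datum $f_{\lambda }^{0}=\gamma _{\lambda }(v)(f^{0}\ast \Theta _{\lambda })$ is chosen so that its $v$-moments are controlled by $C\left\Vert f^{0}\right\Vert _{L^{p}}$ independently of the parameter (estimates (\ref{r10})--(\ref{r11})), Hamdache's Lemma 2.1 then propagates the moments of $f_{\varepsilon ,\lambda }$ in terms of this initial moment and $\left\Vert \boldsymbol{w}\right\Vert _{L^{2}(0,T;L^{N+m}(\Omega )^{N})}$ (estimates (\ref{r12})--(\ref{r14})), and in addition the drag term in the regularized Stokes equation (\ref{r2'}) is itself cut off by $\gamma _{\lambda }(v)$ so that the right-hand side lies in $L^{2}$ for the fixed-point step. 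You need this mechanism, or a substitute for it, and your proposal does not supply one.

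Relatedly, you misstate the role of Hamdache's Lemma 2.1: it is a moment-propagation estimate for the Vlasov equation with a prescribed velocity field, not a statement about boundary contributions. The vanishing of the boundary term when (\ref{1.1}) is multiplied by $\left\vert v\right\vert ^{2}/2$ follows from an elementary symmetry computation using the specular reflection condition ($v^{\ast }\cdot \nu =-v\cdot \nu $, $\left\vert v^{\ast }\right\vert =\left\vert v\right\vert $, $dv^{\ast }=dv$), carried out directly in the proof of Lemma \ref{l2.1}. So the sentence in which you explain why no higher-order moment assumptions on $f^{0}$ are needed rests on the wrong lemma, and this is symptomatic of the missing uniformity argument described above.
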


The proof of Theorem \ref{t2.1} will be done in several steps described in
the subsections that follow. The general idea is loosely to regularize our
problem, solve the regularized problem and take the limit of its solution to
obtain a solution of our problem.

\subsection{Regularization and truncation}

We start by fixing notations that will be useful in the sequel. Let $(\theta
_{\lambda })_{\lambda >0}$ be a mollifying sequence in $x$ i.e. its terms
belong to $\mathcal{C}_{0}^{\infty }(\mathbb{R}_{x}^{N})$ (the space of
compactly supported and smooth functions) such that $\theta _{\lambda
}(x)=\lambda ^{-N}\theta (x/\lambda )$ for all $x\in \mathbb{R}^{N}$ and $%
\lambda >0$ , where $\theta \in \mathcal{C}_{0}^{\infty }(\mathbb{R}%
_{x}^{N}) $, $0\leq \theta \leq 1$, $\mbox{supp}(\theta )\subset
B(0,1)=\{x\in \mathbb{R}^{N}:\left\vert x\right\vert \leq 1\}$ and $\int_{%
\mathbb{R}^{N}}\theta (x)dx=1$. A regularizing sequence in $(x,v)\in \mathbb{%
R}_{x}^{N}\times \mathbb{R}_{v}^{N}$ will be denoted $(\Theta _{\lambda
})_{\lambda >0}$. We shall use the same notation for convolution in $x$ and $%
(x,v)$. The context shall indicate which convolution is used. For
vector-valued functions, convolution is done componentwise. We also consider 
$\gamma \in \mathcal{C}_{0}^{\infty }(\mathbb{R}_{v}^{N})$ such that $%
\mbox{supp}(\gamma )\subset B(0,2)$, $\gamma (v)=1$ for all $v\in B(0,1)$
and $0\leq \gamma \leq 1$. We define the truncating sequence $(\gamma
_{\lambda })_{\lambda >0}$ by $\gamma _{\lambda }(v)=\gamma (\lambda v)$. It
can be verified that $\gamma _{\lambda }(v)\rightarrow 1$ as $\lambda
\rightarrow 0$.

In the sequel, a function defined on $\Omega $ will be extended by zero
outside of $\Omega $ before convolution although we shall keep the same
notation for the extended function. Furthermore, especially in inequalities
involving $\lambda $, we shall assume throughout that $0<\lambda \leq 1$.
The latter assumption is used when needed to obtained uniform estimates in $%
\lambda $.

%

Let $\boldsymbol{w}\in L^{2}(0,T;V)$. 
The regularized system associated to our $\varepsilon $-problem takes the
following form:

\begin{equation}
\frac{\partial f_{\varepsilon,\lambda }}{\partial t}+\varepsilon v\cdot
\nabla f_{\varepsilon,\lambda }+\Div_{v}\left( (\boldsymbol{w}%
*\theta_{\lambda} -v)f_{\varepsilon,\lambda }\right) =0\mbox{ in }Q\times 
\mathbb{R}^{N}  \label{r1}
\end{equation}

\begin{equation}
\frac{\partial \boldsymbol{u}_{\varepsilon ,\lambda }}{\partial t}-\Div%
\left( A_{0}^{\varepsilon }\nabla \boldsymbol{u}_{\varepsilon ,\lambda
}+\int_{0}^{t}A_{1}^{\varepsilon }(t-\tau ,x)\nabla \boldsymbol{w}(\tau
,x)d\tau \right) +\nabla p_{\varepsilon ,\lambda }=-\int_{\mathbb{R}^{N}}(%
\boldsymbol{w}\ast \theta _{\lambda }-v)\gamma _{\lambda }(v)f_{\varepsilon
,\lambda }dv\mbox{ in }Q,  \label{r2}
\end{equation}%
\begin{equation}
\Div\boldsymbol{u}_{\varepsilon ,\lambda }=0\mbox{ in }Q,  \label{r3}
\end{equation}%
The system (\ref{r1})-(\ref{r3}) is supplemented with the following initial
and boundary conditions. 
\begin{equation}
\begin{array}{l}
\mbox{a) }\boldsymbol{u}_{\varepsilon ,\lambda }(0,x)=u^{0}(x); \\ 
\mbox{b) }f_{\varepsilon ,\lambda }(0,x,v):=f_{\lambda }^{0}(x,v)=\gamma
_{\lambda }(v)(f^{0}\ast \Theta _{\lambda })(x,v),\ x\in \Omega ,v\in 
\mathbb{R}^{N},%
\end{array}
\label{r4}
\end{equation}%
and the boundary conditions 
\begin{equation}
\begin{array}{l}
\mbox{a) }\boldsymbol{u}_{\varepsilon ,\lambda }=0\mbox{ on }\partial \Omega
; \\ 
\mbox{b) }f_{\varepsilon ,\lambda }(t,x,v)=f_{\varepsilon ,\lambda
}(t,x,v^{\ast })\mbox{ for }x\in \partial \Omega \mbox{ with }v\cdot \nu
(x)<0.%
\end{array}
\label{r5}
\end{equation}

\subsection{Existence, regularity and estimates of ${f_{\protect\varepsilon ,%
\protect\lambda }}$}

In this part, we focus on Eq. (\ref{r1}) coupled with the initial condition (%
\ref{r4} b) and the boundary condition (\ref{r5} b). In what follows, we use 
$C$ as a generic name for positive constants independent of both $%
\varepsilon $ and $\lambda $. In all the estimates, we suppose when the need
arises that $\varepsilon $ and $\lambda $ are sufficiently small. We shall
assume throughout that $f^{0}\in L^{p}(\Omega \times \mathbb{R}^{n})\cap
L^{\infty }(\Omega \times \mathbb{R}^{n})$, $p\geq 1$, unless we mention
otherwise.

Since $|f_{\lambda }^{0}|\leq |f^{0}\ast \Theta _{\lambda }|$, we have 
\begin{equation*}
\left\Vert f_{\lambda }^{0}\right\Vert _{L^{p}(\Omega \times \mathbb{R}%
^{N})}\leq \left\Vert f^{0}\ast \Theta _{\lambda }\right\Vert _{L^{p}(\Omega
\times \mathbb{R}^{N})}\leq \left\Vert f^{0}\right\Vert _{L^{p}(\Omega
\times \mathbb{R}^{N})}\left\Vert \Theta _{\lambda }\right\Vert
_{L^{1}(\Omega \times \mathbb{R}^{N})}\leq \left\Vert f^{0}\right\Vert
_{L^{p}(\Omega \times \mathbb{R}^{N})}
\end{equation*}%
in such a way that 
\begin{equation}
\left\Vert f_{\lambda }^{0}\right\Vert _{L^{p}(\Omega \times \mathbb{R}%
^{N})}\leq \left\Vert f^{0}\right\Vert _{L^{p}(\Omega \times \mathbb{R}%
^{N})}.  \label{r6}
\end{equation}

By Theorem 4 of Mischler \cite{SM}, we infer that $f_{\varepsilon ,\lambda }$
uniquely exists and belongs to $L^{\infty }(0,T,L^{\infty }(\Omega \times 
\mathbb{R}^{N})\cap L^{p}(\Omega \times \mathbb{R}^{N}))$. Since both $%
f_{\lambda }^{0}$ and the coefficients of Eq. (\ref{r1}) are $\mathcal{C}%
^{\infty }$, we can deduce using the method of characteristics that $%
f_{\varepsilon ,\lambda }$ is nonnegative and belongs to $\mathcal{C}%
^{1}((0,T)\times \Omega \times \mathbb{R}^{N})$. Following \cite[p. 54]%
{Hamdache}, we have 
\begin{equation}
\frac{d}{dt}\left( e^{Nt}\int_{\Omega \times \mathbb{R}^{N}}\left(
e^{-Nt}f_{\varepsilon ,\lambda }\right) ^{p}dxdv\right) =0.  \label{r7}
\end{equation}%
Then, by integrating both sides of Eq (\ref{r7}) from $0$ to $t$, we obtain 
\begin{equation}
\left\Vert f_{\varepsilon ,\lambda }(t)\right\Vert _{L^{p}(\Omega \times 
\mathbb{R}^{N})}=\mbox{e}^{Nt\left( 1-\frac{1}{p}\right) }\left\Vert
f_{\lambda }^{0}\right\Vert _{L^{p}(\Omega \times \mathbb{R}^{N})}
\label{r8}
\end{equation}%
Thus, by using the inequality (\ref{r6}), we arrive at the estimate 
\begin{equation}
\left\Vert f_{\varepsilon ,\lambda }\right\Vert _{L^{\infty
}(0,T;L^{p}(\Omega \times \mathbb{R}^{N}))}\leq C(N,T,p)\left\Vert
f^{0}\right\Vert _{L^{p}(\Omega \times \mathbb{R}^{N})}.  \label{r9}
\end{equation}

Now, we turn our attention to the estimates of $v$-moments of $%
f_{\varepsilon ,\lambda }$. Lemma 2.1 of Hamdache \cite{Hamdache} will be
our workhorse. Let us first observe that $f_{\lambda }^{0}\in L^{\infty
}(\Omega \times \mathbb{R}^{N})\cap L^{1}(\Omega \times \mathbb{R}^{N})$
since $f_{\lambda }^{0}$ is compactly supported and $f^{0}\in L^{\infty
}(\Omega \times \mathbb{R}^{N})\cap L^{p}(\Omega \times \mathbb{R}^{N})$, $%
p\geq 1$ be fixed. Furthermore, we assert that for any $m\geq 1$, 
\begin{equation}
\int_{\Omega \times \mathbb{R}^{N}}\left\vert v\right\vert ^{m}f_{\lambda
}^{0}dxdv\leq C(N,p)\left\Vert f^{0}\right\Vert _{L^{p}(\Omega \times 
\mathbb{R}^{N})}.  \label{r10}
\end{equation}%
Indeed, the following inequalities 
\begin{eqnarray}
\int_{\Omega \times \mathbb{R}^{N}}\left\vert v\right\vert ^{m}f_{\lambda
}^{0}dxdv &\leq &\int_{\Omega \times \{\left\vert v\right\vert \leq
2\}}\left\vert v\right\vert ^{m}f_{\lambda }^{0}\,dxdv+\int_{\Omega \times
\{\left\vert v\right\vert >2\}}\left\vert v\right\vert ^{m}f_{\lambda
}^{0}dxdv=\int_{\Omega \times \{\left\vert v\right\vert \leq 2\}}\left\vert
v\right\vert ^{m}f_{\lambda }^{0}dxdv  \notag \\
&\leq &\int_{\Omega \times \{\left\vert v\right\vert \leq 2\}}f_{\lambda
}^{0}dxdv\leq \left\vert \Omega \times \{\left\vert v\right\vert \leq
2\}\right\vert ^{1/q}\left\Vert f_{\lambda }^{0}\right\Vert _{L^{p}(\Omega
\times \mathbb{R}^{N}}  \notag \\
&\leq &C(N,p)\,\left\Vert f^{0}\right\Vert _{L^{p}(\Omega \times \mathbb{R}%
^{N})}\mbox{ (see (\ref{r6})),}  \label{r11}
\end{eqnarray}%
where $q$ is the conjugate exponent of $p$, are true. In view of the
continuous embedding $H_{0}^{1}(\Omega )\hookrightarrow L^{r}(\Omega )$ for
any $1\leq r\leq 6$, and since $N\leq 3$, we may choose $m\geq 1$ such that $%
N+m\leq 6$ (for example $1\leq m\leq 4$ if $N=2$, and $1\leq m\leq 3$ for $%
N=3$). Thus, for such an $m$, the fact that $\boldsymbol{w}\in
L^{2}(0,T;L^{N+m}(\Omega )^{N})$ steems from both the above continuous
embedding and $\boldsymbol{w}\in L^{2}(0,T;V)$. With this in mind and taking
into account Eq. (\ref{r10}), we see that we are within the hypotheses of 
\cite[Lemma 2.1]{Hamdache}. Hence, the following estimate holds: 
\begin{equation}
\int_{\Omega \times \mathbb{R}^{N}}\left\vert v\right\vert
^{m}f_{\varepsilon ,\lambda }\,dxdv\leq C(N,T)\left[ \left( \int_{\Omega
\times \mathbb{R}^{N}}\left\vert v\right\vert ^{m}f_{\lambda
}^{0}dxdv\right) ^{\frac{1}{N+m}}+(\left\Vert f_{\lambda }^{0}\right\Vert
_{L^{\infty }(\Omega \times \mathbb{R}^{N})}+1)\left\Vert \boldsymbol{w}\ast
\theta _{\lambda }\right\Vert _{L^{2}(0,T;L^{N+m}(\Omega )^{N})}\right]
^{N+m}.  \label{r12}
\end{equation}%
By employing the inequality (\ref{r10}) in conjunction with the estimate 
\begin{equation}
\left\Vert \boldsymbol{w}(t,\cdot )\ast \theta _{\lambda }\right\Vert
_{L^{N+m}(\Omega )^{N}}\leq \left\Vert \boldsymbol{w}(t,\cdot )\right\Vert
_{L^{N+m}(\Omega )^{N}}\left\Vert \theta _{\lambda }\right\Vert
_{L^{1}(\Omega )}\leq \left\Vert \boldsymbol{w}(t,\cdot )\right\Vert
_{L^{N+m}(\Omega )^{N}},  \label{r13}
\end{equation}%
and noting that $\left\Vert f_{\lambda }^{0}\right\Vert _{L^{\infty }(\Omega
\times \mathbb{R}^{N})}\leq \left\Vert f^{0}\right\Vert _{L^{\infty }(\Omega
\times \mathbb{R}^{N})}$, we arrive at the inequality 
\begin{equation*}
\int_{\Omega \times \mathbb{R}^{N}}\left\vert v\right\vert
^{m}f_{\varepsilon ,\lambda }dxdv\leq C(N,m,p,T)\left[ \left\Vert
f^{0}\right\Vert _{L^{p}(\Omega \times \mathbb{R}^{N})}^{\frac{1}{N+m}%
}+(\left\Vert f^{0}\right\Vert _{L^{\infty }(\Omega \times \mathbb{R}%
^{N})}+1)\left\Vert \boldsymbol{w}\right\Vert _{L^{2}(0,T;L^{N+m}(\Omega
)^{N})}\right] ^{N+m}
\end{equation*}%
for any $m\geq 1$ satisfying $N+m\leq 6$. By employing the Sobolev embedding 
$H_{0}^{1}(\Omega )\hookrightarrow L^{N+m}(\Omega )$, the latter inequality
leads to 
\begin{equation}
\int_{\Omega \times \mathbb{R}^{N}}\left\vert v\right\vert
^{m}f_{\varepsilon ,\lambda }dxdv\leq C\left[ \left\Vert f^{0}\right\Vert
_{L^{p}(\Omega \times \mathbb{R}^{N})}^{\frac{1}{N+m}}+(\left\Vert
f^{0}\right\Vert _{L^{\infty }(\Omega \times \mathbb{R}^{N})}+1)\left\Vert 
\boldsymbol{w}\right\Vert _{L^{2}(0,T;V)}\right] ^{N+m}  \label{r14}
\end{equation}%
for any $m\geq 1$ satisfying $N+m\leq 6$, where $C=C(N,m,p,T,\Omega )$.

\begin{remark}
\label{r2.0}\emph{We shall discover in the sequel that the estimate (\ref%
{r14}) makes assumptions on higher-order }$v$\emph{-moments of the initial
distribution redundant. It is opportune to emphasize that besides Lemma 2.1
of Hamdache \cite{Hamdache}, both regularization and truncation have played
a fundamental role in deriving the inequality (\ref{r14}).}
\end{remark}

Next, we provide estimates which show among other things that the force
field 
\begin{equation}
\boldsymbol{F}_{\varepsilon ,\lambda }=\boldsymbol{G}_{\varepsilon ,\lambda
}+\boldsymbol{H}_{\varepsilon ,\lambda },  \label{rn14}
\end{equation}%
where 
\begin{equation}
\boldsymbol{G}_{\varepsilon ,\lambda }(t,x)=-\int_{\mathbb{R}^{N}}(%
\boldsymbol{w}\ast \theta _{\lambda }-v)\gamma _{\lambda }(v)f_{\varepsilon
,\lambda }dv  \label{r14'}
\end{equation}%
and%
\begin{equation}
\boldsymbol{H}_{\varepsilon ,\lambda }(t,x)=\Div\left(
\int_{0}^{t}A_{1}^{\varepsilon }(t-\tau ,x)\nabla \boldsymbol{w}(\tau
,x)d\tau \right) ,  \label{r14''}
\end{equation}%
belongs to $L^{2}(0,T;H^{-1}(\Omega )^{N})$. So, let $\boldsymbol{\Phi }\in 
\mathcal{C}_{0}^{\infty }(\Omega )^{N}$. For almost all $t\in \lbrack 0,T]$
we have 
\begin{eqnarray}
\left\vert \left\langle \boldsymbol{G}_{\varepsilon ,\lambda }(t,x),%
\boldsymbol{\Phi }\right\rangle \right\vert &\leq &\int_{\Omega \times
\{\left\vert v\right\vert \leq 2\}}(1+\left\vert \boldsymbol{w}\ast \theta
_{\lambda }\right\vert )f_{\varepsilon ,\lambda }\left\vert \boldsymbol{\Phi 
}\right\vert dxdv  \notag \\
&\leq &C(N)\left( 1+\left\Vert \boldsymbol{w}(t,\cdot )\right\Vert
_{L^{2}(\Omega )^{N}}\right) \left\Vert f_{\varepsilon ,\lambda }(t,\cdot
)\right\Vert _{L^{\infty }(\Omega \times \mathbb{R}^{N})}\left\Vert 
\boldsymbol{\Phi }(t,\cdot )\right\Vert _{L^{2}(\Omega )^{N}}.  \label{r15}
\end{eqnarray}%
Thus, for almost all $t\in \lbrack 0,T]$ we have 
\begin{equation}
\left\Vert \boldsymbol{G}_{\varepsilon ,\lambda }(t,\cdot )\right\Vert
_{L^{2}(\Omega )^{N}}\leq C(N)\left( 1+\left\Vert \boldsymbol{w}(t,\cdot
)\right\Vert _{L^{2}(\Omega )^{N}}\right) \left\Vert f_{\varepsilon ,\lambda
}(t,\cdot )\right\Vert _{L^{\infty }(\Omega \times \mathbb{R}^{N})}.
\label{r16}
\end{equation}%
Since $f_{\lambda }^{0}\in L^{\infty }(\Omega \times \mathbb{R}^{N})$, by
the maximum principle applied to the transport equation, we have 
\begin{equation}
\left\Vert f_{\varepsilon ,\lambda }\right\Vert _{L^{\infty }(0,T;L^{\infty
}(\Omega \times \mathbb{R}^{N}))}\leq C(N,T)\left\Vert f_{\lambda
}^{0}\right\Vert _{L^{\infty }(\Omega \times \mathbb{R}^{N})}\leq
C(N,T)\left\Vert f^{0}\right\Vert _{L^{\infty }(\Omega \times \mathbb{R}%
^{N})}.  \label{r17}
\end{equation}%
Therefore, using Eq. (\ref{r17}) in Eq. (\ref{r16}), we obtain 
\begin{equation}
\left\Vert \boldsymbol{G}_{\varepsilon ,\lambda }(t,\cdot )\right\Vert
_{L^{2}(\Omega )^{N}}\leq C(N,T)\left\Vert f^{0}\right\Vert _{L^{\infty
}(\Omega \times \mathbb{R}^{N})}\left( 1+\left\Vert \boldsymbol{w}(t,\cdot
)\right\Vert _{L^{2}(\Omega )^{N}}\right)  \label{r18}
\end{equation}%
for almost all $t\in \lbrack 0,T]$. Thus, since $\boldsymbol{w}\in
L^{2}(0,T;L^{2}(\Omega )^{N})$, so is $\boldsymbol{G}_{\varepsilon ,\lambda
} $.

Now, we turn our attention to $\boldsymbol{H}_{\varepsilon ,\lambda }$: 
\begin{eqnarray}
\left\vert \left\langle \boldsymbol{H}_{\varepsilon ,\lambda }(t,.),%
\boldsymbol{\Phi }\right\rangle \right\vert &\leq &N^{3}\left\Vert
A_{1}\right\Vert _{L^{\infty }(Q\times \mathbb{R}^{N+1})^{N^{2}}}%
\int_{0}^{t}\int_{\Omega }\left\vert \nabla \boldsymbol{w}\right\vert
\left\vert \nabla \boldsymbol{\Phi }\right\vert dxd\tau  \notag \\
&\leq &N^{3}\left\Vert A_{1}\right\Vert _{L^{\infty }(Q\times \mathbb{R}%
^{N+1})^{N^{2}}}T\left\Vert \nabla \boldsymbol{w}\right\Vert
_{L^{2}(0,T;L^{2}(\Omega )^{N^{2}})}\left\Vert \nabla \boldsymbol{\Phi }%
\right\Vert _{L^{2}(\Omega )^{N^{2}}}.  \label{r19}
\end{eqnarray}%
Therefore, $\boldsymbol{H}_{\varepsilon ,\lambda }\in
L^{2}(0,T;H^{-1}(\Omega )^{N})$ and 
\begin{equation}
\left\Vert \boldsymbol{H}_{\varepsilon ,\lambda }\right\Vert
_{L^{2}(0,T;H^{-1}(\Omega )^{N})}\leq N^{3}\left\Vert A_{1}\right\Vert
_{L^{\infty }(Q\times \mathbb{R}^{N+1})^{N^{2}}}T^{3/2}\left\Vert \nabla 
\boldsymbol{w}\right\Vert _{L^{2}(0,T;L^{2}(\Omega )^{N^{2}})}.  \label{r20}
\end{equation}

\subsection{Existence of $(\boldsymbol{u}_{\protect\varepsilon ,\protect%
\lambda },p_{\protect\varepsilon ,\protect\lambda })$ and further estimates}

We look for $\boldsymbol{u}_{\varepsilon ,\lambda }\in L^{2}(0,T;V)$ such
that $\partial \boldsymbol{u}_{\varepsilon }/\partial t\in
L^{2}(0,T;V^{\prime })$ and, for almost all $t\in \lbrack 0,T]$ and all $%
\boldsymbol{\Phi }\in V$, 
\begin{eqnarray}
\left\langle {\frac{d\boldsymbol{u}_{\varepsilon ,\lambda }}{dt}},%
\boldsymbol{\Phi }\right\rangle +a_{\varepsilon }(t;\boldsymbol{u}%
_{\varepsilon ,\lambda },\boldsymbol{\Phi }) &=&\langle \boldsymbol{F}%
_{\varepsilon ,\lambda }(t),\boldsymbol{\Phi }\rangle ,\quad  \label{r21} \\
\boldsymbol{u}_{\varepsilon ,\lambda }(0) &=&\boldsymbol{u}_{0}\in H,
\label{r22}
\end{eqnarray}%
where 
\begin{equation}
a_{\varepsilon }(t;\boldsymbol{u},\boldsymbol{\Phi })=\int_{\Omega
}A_{0}^{\varepsilon }(t,x)\nabla \boldsymbol{u}(x)\cdot \nabla \boldsymbol{%
\Phi }(x)dx  \label{r22'}
\end{equation}%
and $\boldsymbol{F}_{\varepsilon ,\lambda }$ is defined in Eq. (\ref{rn14}).
Note that by standard arguments \textbf{\cite{Lions1}}, the problem (\ref%
{r21})-(\ref{r22}) makes sense. By direct computations using assumptions
made on the $A_{i}$s, one arrives at the following properties of $%
a_{\varepsilon }$.

\begin{itemize}
\item The function $t\mapsto a_{\varepsilon}(t; \boldsymbol{u}, \boldsymbol{%
\Phi})$ is mesurable for all $\boldsymbol{u}, \boldsymbol{\Phi} \in V$.

\item For almost every $t\in \lbrack 0,T]$ and for all $\boldsymbol{u},%
\boldsymbol{\Phi }\in V$, 
\begin{equation}
\left\vert a_{\varepsilon }(t;\boldsymbol{u},\boldsymbol{\Phi })\right\vert
\leq N^{3}\left\Vert A_{0}\right\Vert _{L^{\infty }(Q\times \mathbb{R}%
^{N+1})^{N^{2}}}\left\Vert \boldsymbol{u}\right\Vert _{V}\left\Vert 
\boldsymbol{\Phi }\right\Vert _{V}:=M\left\Vert \boldsymbol{u}\right\Vert
_{V}\left\Vert \boldsymbol{\Phi }\right\Vert _{V}.  \label{r23}
\end{equation}

\item For almost every $t\in \lbrack 0,T]$ and for all $\boldsymbol{v}\in V$%
, 
\begin{equation}
\left\vert a_{\varepsilon }(t;\boldsymbol{v},\boldsymbol{v})\right\vert \geq
\alpha \left\Vert \boldsymbol{v}\right\Vert _{V}^{2}.  \label{r24}
\end{equation}
\end{itemize}

Thus, by Lions' theorem \cite{Lions1}, there is a unique $\boldsymbol{u}%
_{\varepsilon ,\lambda }\in L^{2}(0,T;V)\cap \mathcal{C}(0,T;H)$ satisfying
Eqs (\ref{r21})-(\ref{r22}). Since $\boldsymbol{F}_{\varepsilon ,\lambda
}\in L^{2}(0,T;H^{-1}(\Omega )^{N})$ and $\boldsymbol{w}\in L^{2}(0,T,V)$,
it is a simple matter to check that $\frac{\partial \boldsymbol{u}%
_{\varepsilon ,\lambda }}{\partial t}-\Div\left( A_{0}^{\varepsilon }\nabla 
\boldsymbol{u}_{\varepsilon ,\lambda }\right) -\boldsymbol{F}_{\varepsilon
,\lambda }\in H^{-1}(\Omega )^{N}\subset \mathcal{D}^{\prime }(\Omega )^{N}$
($\mathcal{D}^{\prime }(\Omega )$ the usual space of distributions on $%
\Omega $) for almost all $t\in \lbrack 0,T]$. Thus, thanks to Eq. (\ref{r21}%
) and Propositions 1.1 and 1.2 of \cite{Temam}, there is a unique $%
p_{\varepsilon ,\lambda }(t)\in L^{2}(\Omega )/\mathbb{R}$ such that Eq. (%
\ref{r2}) holds in the sense of distributions and 
\begin{equation}
\left\Vert p_{\varepsilon ,\lambda }(t)\right\Vert _{L^{2}(\Omega )/\mathbb{R%
}}\leq C(\Omega )\left\Vert \nabla p_{\varepsilon ,\lambda }(t)\right\Vert
_{H^{-1}(\Omega )^{N}}  \label{r25}
\end{equation}%
for almost all $t\in \lbrack 0,T]$.

\subsection{Solvability of (\protect\ref{r1})-(\protect\ref{r5}) with $%
\boldsymbol{w}=\boldsymbol{u}_{\protect\varepsilon ,\protect\lambda }$}

Here, we prove via Schauder's fixed point theorem that by letting $%
\boldsymbol{w}=\boldsymbol{u}_{\varepsilon ,\lambda }$, the regularized
problem is still solvable. In order to do that, we consider the mapping 
\begin{equation*}
S:L^{2}(0,T;V)\rightarrow L^{2}(0,T;V) \mbox{ with } \boldsymbol{w}\mapsto 
\boldsymbol{u}_{\varepsilon ,\lambda },
\end{equation*}%
where $\boldsymbol{u}_{\varepsilon ,\lambda }$ is the unique solution of
Eqs. (\ref{r21})-(\ref{r22}). The mapping $S$ is well-defined because of the
previous step . We need to show that $S$ has a fixed point as asserted in
the next result.

\begin{proposition}
\label{p2.1}There exists a function $\boldsymbol{u}_{\varepsilon ,\lambda }$
in $L^{2}(0,T;V)$ such that $S\boldsymbol{u}_{\varepsilon ,\lambda }=%
\boldsymbol{u}_{\varepsilon ,\lambda }$.
\end{proposition}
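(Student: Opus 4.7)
The plan is to apply Schauder's fixed point theorem on the set
\[
K_{R}=\{\boldsymbol{w}\in L^{2}(0,T;V) : \|\boldsymbol{w}\|_{L^{2}(0,T;V)}\le R\}
\]
for a suitable $R>0$, viewed as a subset of $L^{2}(0,T;H)$; it is convex, bounded, and closed there since bounded sets of $L^{2}(0,T;V)$ are weakly compact and the $L^{2}(0,T;V)$-norm is lower semicontinuous under $L^{2}(0,T;H)$-convergence. The key a priori estimate comes from testing (\ref{r21}) with $\boldsymbol{\Phi}=\boldsymbol{u}_{\varepsilon,\lambda}(t)$, invoking coercivity (\ref{r24}) and the bounds (\ref{r18}), (\ref{r20}) on $\boldsymbol{G}_{\varepsilon,\lambda}$ and $\boldsymbol{H}_{\varepsilon,\lambda}$, and applying Young's inequality. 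After integration one obtains
\[
\|\boldsymbol{u}_{\varepsilon,\lambda}(t)\|_{H}^{2}+\alpha\int_{0}^{t}\|\boldsymbol{u}_{\varepsilon,\lambda}(s)\|_{V}^{2}\,ds\le C_{0}+C_{1}\int_{0}^{t}\|\boldsymbol{w}(s)\|_{V}^{2}\,ds,
\]
with $C_{0},C_{1}$ depending only on the data. Evaluating at $t=T$ gives $\|S\boldsymbol{w}\|_{L^{2}(0,T;V)}^{2}\le A+B\|\boldsymbol{w}\|_{L^{2}(0,T;V)}^{2}$: if $B<1$, any $R^{2}\ge A/(1-B)$ yields $S(K_{R})\subset K_{R}$, and in the opposite case the same computation on a sufficiently short subinterval $[0,T_{1}]$ produces self-mapping there, which is then extended to $[0,T]$ by finite concatenation using the $H$-values of the solution at each junction as initial data.

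For the remaining Schauder hypotheses, the same estimates together with (\ref{r18}), (\ref{r20}) and the linear structure of (\ref{r21}) show that $\partial_{t}\boldsymbol{u}_{\varepsilon,\lambda}$ is uniformly bounded in $L^{2}(0,T;V')$ over $K_{R}$; the Aubin--Lions lemma with the compact embedding $V\hookrightarrow H$ then gives relative compactness of $S(K_{R})$ in $L^{2}(0,T;H)$. Continuity is obtained by taking $\boldsymbol{w}_{n}\to\boldsymbol{w}$ in $L^{2}(0,T;H)$ with $\boldsymbol{w}_{n}\in K_{R}$ and extracting $\boldsymbol{w}_{n}\rightharpoonup\boldsymbol{w}$ weakly in $L^{2}(0,T;V)$. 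For fixed $\lambda$ the convolution $\boldsymbol{w}_{n}\ast\theta_{\lambda}$ converges in every spatial $\mathcal{C}^{k}$-norm for a.e.\ $t$, so the smoothed transport equation (\ref{r1}) can be solved by the method of characteristics and the associated densities $f_{\varepsilon,\lambda,n}$ converge strongly enough that $\boldsymbol{G}_{\varepsilon,\lambda,n}\to\boldsymbol{G}_{\varepsilon,\lambda}$ strongly in $L^{2}(0,T;L^{2}(\Omega)^{N})$, while by linearity $\boldsymbol{H}_{\varepsilon,\lambda,n}\rightharpoonup\boldsymbol{H}_{\varepsilon,\lambda}$ weakly in $L^{2}(0,T;H^{-1}(\Omega)^{N})$. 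Passing to the limit in (\ref{r21}) and invoking uniqueness for the linear Stokes problem identifies the weak limit of $S\boldsymbol{w}_{n}$ as $S\boldsymbol{w}$; the standard subsequence principle then upgrades this to convergence of the full sequence in $L^{2}(0,T;H)$.

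The principal obstacle is the self-mapping step. The memory contribution $\boldsymbol{H}_{\varepsilon,\lambda}$ is linear in $\nabla\boldsymbol{w}$ with a constant involving $\|A_{1}\|_{\infty}$ and $T$, so it need not be absorbable into the coercive left-hand side in a single Gronwall step, which forces the subinterval-and-concatenation device noted above; care is then required to ensure that the memory kernel $\int_{0}^{t}A_{1}^{\varepsilon}(t-\tau,x)\nabla\boldsymbol{w}(\tau,x)\,d\tau$ remains consistent across junctions, by running the fixed-point construction for the fully coupled system on each subinterval rather than for a truncated problem.
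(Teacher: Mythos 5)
Your overall route --- Schauder's theorem for the map $S$, energy estimates for (\ref{r21})--(\ref{r22}), a uniform bound on $\partial _{t}\boldsymbol{u}_{\varepsilon ,\lambda }$ in $L^{2}(0,T;V^{\prime })$, and Aubin--Lions compactness --- is the same as the paper's. In two respects you are more careful than the printed proof: you exhibit an explicit closed, convex set $K_{R}$ in $L^{2}(0,T;H)$ on which Schauder is applied (the paper works with difference estimates (\ref{6.1})--(\ref{6.2}) and never constructs an invariant set), and your continuity argument (convergence of the mollified drifts, stability of the Vlasov densities, weak passage in the linear Stokes problem, identification by uniqueness) correctly tracks the dependence of $f_{\varepsilon ,\lambda }$ on $\boldsymbol{w}$, which the paper's Lipschitz-type computation tacitly ignores when it writes the equation for $\boldsymbol{u}_{\varepsilon ,\lambda }^{1}-\boldsymbol{u}_{\varepsilon ,\lambda }^{2}$ with a single $f_{\varepsilon ,\lambda }$.

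The genuine weak point is the self-mapping step. By (\ref{r18}) the drag force satisfies $\Vert \boldsymbol{G}_{\varepsilon ,\lambda }(t)\Vert _{L^{2}}\leq C(1+\Vert \boldsymbol{w}(t)\Vert _{L^{2}})$ with $C$ fixed by the data; after the pointwise Young/Poincar\'{e} step and Gronwall, its contribution to the coefficient $B$ in $\Vert S\boldsymbol{w}\Vert ^{2}\leq A+B\Vert \boldsymbol{w}\Vert ^{2}$ is of order $C^{2}C_{P}^{2}e^{T}/\alpha $ and does \emph{not} shrink when the time interval is shortened, whereas the memory term (\ref{r20}) --- which you single out as the principal obstacle --- already carries the factor $T^{3}$ and is harmless on short intervals. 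Consequently ``the same computation on a sufficiently short subinterval'' does not yield $B_{1}<1$: with the inequality exactly as you displayed it ($C_{0},C_{1}$ depending only on the data), restriction to $[0,T_{1}]$ leaves $B$ unchanged, and the fallback fails as stated. The step is repairable: keep $\sup_{t\leq T_{1}}\Vert \boldsymbol{u}_{\varepsilon ,\lambda }(t)\Vert _{H}^{2}$ on the left-hand side and estimate $\int_{0}^{T_{1}}\Vert \boldsymbol{G}_{\varepsilon ,\lambda }\Vert _{L^{2}}\Vert \boldsymbol{u}_{\varepsilon ,\lambda }\Vert _{L^{2}}\,dt\leq \sup_{t\leq T_{1}}\Vert \boldsymbol{u}_{\varepsilon ,\lambda }\Vert _{H}\left( C T_{1}+C T_{1}^{1/2}C_{P}\Vert \boldsymbol{w}\Vert _{L^{2}(0,T_{1};V)}\right) $, so that after Young the coefficient of $\Vert \boldsymbol{w}\Vert _{L^{2}(0,T_{1};V)}^{2}$ is $O(T_{1})$; combined with the $O(T_{1}^{3})$ memory contribution this gives an invariant ball for small $T_{1}$, and your concatenation then goes through, since $\Vert f_{\varepsilon ,\lambda }(t)\Vert _{L^{\infty }}\leq e^{NT}\Vert f^{0}\Vert _{L^{\infty }}$ keeps all constants uniform on $[0,T]$ and the memory over already-determined subintervals enters only the additive constant $A$.
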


\begin{proof}
The mapping $S$ is not linear. However, we can check that it is Lipschitz
continuous. Indeed, let $\boldsymbol{w}_{1},\boldsymbol{w}_{2}\in
L^{2}(0,T;V)$ and set $\boldsymbol{u}_{\varepsilon ,\lambda }^{i}=S%
\boldsymbol{w}_{i}$ ($i=1,2$), $\boldsymbol{w}=\boldsymbol{w}_{1}-%
\boldsymbol{w}_{2}$. Let us also denote by $p_{\varepsilon ,\lambda }^{i}$ ($%
i=1,2$) the associated pressures. Then, $\boldsymbol{u}_{\varepsilon
,\lambda }=\boldsymbol{u}_{\varepsilon ,\lambda }^{1}-\boldsymbol{u}%
_{\varepsilon ,\lambda }^{2}$ and $p_{\varepsilon ,\lambda }=p_{\varepsilon
,\lambda }^{1}-p_{\varepsilon ,\lambda }^{2}$ solve the following Stokes
system 
\begin{eqnarray*}
\frac{\partial \boldsymbol{u}_{\varepsilon ,\lambda }}{\partial t}-\Div%
\left( A_{0}^{\varepsilon }\nabla \boldsymbol{u}_{\varepsilon ,\lambda
}\right) +\nabla p_{\varepsilon ,\lambda } &=&\Div\left( A_{1}^{\varepsilon
}\ast \nabla \boldsymbol{w}\right) -\int_{\mathbb{R}^{N}}(\boldsymbol{w}\ast
\theta _{\lambda })\gamma _{\lambda }(v)f_{\varepsilon ,\lambda }\;dv%
\mbox{ in
}Q \\
\Div\boldsymbol{u}_{\varepsilon ,\lambda } &=&0\mbox{ in }Q \\
\boldsymbol{u}_{\varepsilon ,\lambda } &=&0\mbox{ on }(0,T)\times \partial
\Omega \\
\boldsymbol{u}_{\varepsilon ,\lambda }(0,x) &=&0\mbox{, }x\in \Omega .
\end{eqnarray*}%
Multiplying the leading equation above by $\boldsymbol{u}_{\varepsilon
,\lambda }$, we find after integrating over $\Omega $ that 
\begin{equation*}
\frac{1}{2}\frac{d}{dt}\int_{\Omega }\left\vert \boldsymbol{u}_{\varepsilon
,\lambda }\right\vert ^{2}dx+\int_{\Omega }(A_{0}^{\varepsilon }\nabla 
\boldsymbol{u}_{\varepsilon ,\lambda }+A_{1}^{\varepsilon }\ast \nabla 
\boldsymbol{w})\cdot \nabla \boldsymbol{u}_{\varepsilon ,\lambda
}dx+\int_{\Omega \times \mathbb{R}^{N}}\gamma _{\lambda }(v)f_{\varepsilon
,\lambda }(\boldsymbol{w}\ast \theta _{\lambda })\cdot \boldsymbol{u}%
_{\varepsilon ,\lambda }dvdx=0.
\end{equation*}%
Integrating the above equation over $(0,t)$ and using assumption (\textbf{A1}%
), we arrive at the inequality 
\begin{equation*}
\begin{array}{l}
\int_{\Omega }\left\vert \boldsymbol{u}_{\varepsilon ,\lambda }\right\vert
^{2}dx+2\alpha \int_{0}^{t}\int_{\Omega }\left\vert \nabla \boldsymbol{u}%
_{\varepsilon ,\lambda }\right\vert ^{2}dxd\tau \leq
-2\int_{0}^{t}\int_{\Omega }(A_{1}^{\varepsilon }\ast \nabla \boldsymbol{w}%
)\cdot \nabla \boldsymbol{u}_{\varepsilon ,\lambda }dxd\tau \\ 
\ \ \ -2\int_{0}^{t}\int_{\Omega \times \mathbb{R}^{N}}\gamma _{\lambda
}(v)f_{\varepsilon ,\lambda }(\boldsymbol{w}\ast \theta _{\lambda })\cdot 
\boldsymbol{u}_{\varepsilon ,\lambda }dvdxd\tau .%
\end{array}%
\end{equation*}%
Using Young's inequality yields 
\begin{equation*}
\begin{array}{l}
2\left\vert \int_{0}^{t}\int_{\Omega }(A_{1}^{\varepsilon }\ast \nabla 
\boldsymbol{w})\cdot \nabla \boldsymbol{u}_{\varepsilon ,\lambda }dxd\tau
\right\vert =2\left\vert \int_{0}^{t}\left( \int_{0}^{\tau }\left(
\int_{\Omega }A_{1}^{\varepsilon }(\tau -s)\nabla \boldsymbol{w}(s)\cdot
\nabla \boldsymbol{u}_{\varepsilon ,\lambda }(\tau )dx\right) ds\right)
d\tau \right\vert \\ 
\ \ \ \ \ \ \leq \int_{0}^{t}\left( \int_{0}^{\tau }\frac{\tau }{\alpha }%
\left\Vert A_{1}^{\varepsilon }(\tau -s)\nabla \boldsymbol{w}(s)\right\Vert
_{L^{2}(\Omega )}^{2}ds+\int_{0}^{\tau }\frac{\alpha }{\tau }\left\Vert
\nabla \boldsymbol{u}_{\varepsilon ,\lambda }(\tau )\right\Vert
_{L^{2}(\Omega )}^{2}ds\right) d\tau \\ 
\ \ \ \ \ \ \leq \int_{0}^{t}\left( \int_{0}^{\tau }\frac{c_{1}}{\alpha }%
\tau \left\Vert \nabla \boldsymbol{w}(s)\right\Vert _{L^{2}(\Omega
)}^{2}ds+\int_{0}^{\tau }\frac{\alpha }{\tau }\left\Vert \nabla \boldsymbol{u%
}_{\varepsilon ,\lambda }(\tau )\right\Vert _{L^{2}(\Omega )}^{2}ds\right)
d\tau \\ 
\ \ \ \ \ \ \ =\alpha \int_{0}^{t}\left\Vert \nabla \boldsymbol{u}%
_{\varepsilon ,\lambda }(\tau )\right\Vert _{L^{2}(\Omega )}^{2}d\tau
+\int_{0}^{t}\left( \frac{c_{1}}{\alpha }\tau \int_{0}^{\tau }\left\Vert
\nabla \boldsymbol{w}(s)\right\Vert _{L^{2}(\Omega )}^{2}ds\right) d\tau \\ 
\ \ \ \ \ \ \ \leq \alpha \int_{0}^{t}\left\Vert \nabla \boldsymbol{u}%
_{\varepsilon ,\lambda }(\tau )\right\Vert _{L^{2}(\Omega )}^{2}d\tau
+Ct\int_{0}^{t}\left( \int_{0}^{\tau }\left\Vert \nabla \boldsymbol{w}%
(s)\right\Vert _{L^{2}(\Omega )}^{2}ds\right) d\tau \mbox{ since }0\leq \tau
\leq t \\ 
\ \ \ \ \ \ \ \leq \alpha \int_{0}^{t}\left\Vert \nabla \boldsymbol{u}%
_{\varepsilon ,\lambda }(\tau )\right\Vert _{L^{2}(\Omega )}^{2}d\tau
+CT\int_{0}^{t}\left( \int_{0}^{\tau }\left\Vert \nabla \boldsymbol{w}%
(s)\right\Vert _{L^{2}(\Omega )}^{2}ds\right) d\tau .%
\end{array}%
\end{equation*}%
Also, it can be verified that 
\begin{eqnarray*}
2\left\vert \int_{0}^{t}\int_{\Omega \times \mathbb{R}^{N}}\gamma _{\lambda
}(v)f_{\varepsilon ,\lambda }(\boldsymbol{w}\ast \theta _{\lambda })\cdot 
\boldsymbol{u}_{\varepsilon ,\lambda }dvdxd\tau \right\vert &\leq
&C(N,T)\left\Vert f^{0}\right\Vert _{L^{\infty }(\Omega \times \mathbb{R}%
^{N})}\left\vert B(0,2)\right\vert \int_{0}^{t}\int_{\Omega }\left\vert 
\boldsymbol{w}\ast \theta _{\lambda }\right\vert \left\vert \boldsymbol{u}%
_{\varepsilon ,\lambda }\right\vert dxd\tau \\
&\leq &C\int_{0}^{t}\int_{\Omega }\left( \left\vert \boldsymbol{w}\ast
\theta _{\lambda }\right\vert ^{2}+\left\vert \boldsymbol{u}_{\varepsilon
,\lambda }\right\vert ^{2}\right) \;dxd\tau \\
&\leq &C\left\Vert \boldsymbol{w}\ast \theta _{\lambda }\right\Vert
_{L^{2}(Q)}^{2}+C\int_{0}^{t}\left\Vert \boldsymbol{u}_{\varepsilon ,\lambda
}\right\Vert _{L^{2}(\Omega )}^{2}d\tau \\
&\leq &C\left\Vert \boldsymbol{w}\right\Vert
_{L^{2}(Q)}^{2}+C\int_{0}^{t}\left\Vert \boldsymbol{u}_{\varepsilon ,\lambda
}\right\Vert _{L^{2}(\Omega )}^{2}\;d\tau ,
\end{eqnarray*}%
where $\left\vert B(0,2)\right\vert $ stands for the Lebesgue measure of $%
B(0,2)$. Thus, 
\begin{eqnarray*}
\int_{\Omega }\left\vert \boldsymbol{u}_{\varepsilon ,\lambda }\right\vert
^{2}dx+\alpha \int_{0}^{t}\left\Vert \nabla \boldsymbol{u}_{\varepsilon
,\lambda }\right\Vert _{L^{2}(\Omega )}^{2}d\tau &\leq &C\left\Vert 
\boldsymbol{w}\right\Vert _{L^{2}(Q)}^{2}+C\int_{0}^{t}\left( \int_{0}^{\tau
}\left\Vert \nabla \boldsymbol{w}(s)\right\Vert _{L^{2}(\Omega
)}^{2}ds\right) d\tau \\
&&+C\int_{0}^{t}\left\Vert \boldsymbol{u}_{\varepsilon ,\lambda }\right\Vert
_{L^{2}(\Omega )}^{2}d\tau
\end{eqnarray*}%
and 
\begin{eqnarray*}
C\left\Vert \boldsymbol{w}\right\Vert _{L^{2}(Q)}^{2}+C\int_{0}^{t}\left(
\int_{0}^{\tau }\left\Vert \nabla \boldsymbol{w}(s)\right\Vert
_{L^{2}(\Omega )}^{2}ds\right) d\tau &\leq &C\left\Vert \boldsymbol{w}%
\right\Vert _{L^{2}(Q)}^{2}+C\int_{0}^{T}\left( \int_{0}^{T}\left\Vert
\nabla \boldsymbol{w}(s)\right\Vert _{L^{2}(\Omega )}^{2}ds\right) d\tau \\
&\leq &C\left\Vert \boldsymbol{w}\right\Vert
_{L^{2}(Q)}^{2}+CT\int_{0}^{T}\left\Vert \nabla \boldsymbol{w}(s)\right\Vert
_{L^{2}(\Omega )}^{2}ds \\
&\leq &C\left\Vert \boldsymbol{w}\right\Vert _{L^{2}(0,T;V)}^{2}.
\end{eqnarray*}%
By conflating the previous estimates, we obtain the inequality 
\begin{equation*}
\int_{\Omega }\left\vert \boldsymbol{u}_{\varepsilon ,\lambda }\right\vert
^{2}dx+\alpha \int_{0}^{t}\left\Vert \nabla \boldsymbol{u}_{\varepsilon
,\lambda }\right\Vert _{L^{2}(\Omega )}^{2}d\tau \leq C\left\Vert 
\boldsymbol{w}\right\Vert _{L^{2}(0,T;V)}^{2}+C\int_{0}^{t}\left\Vert 
\boldsymbol{u}_{\varepsilon ,\lambda }\right\Vert _{L^{2}(\Omega )}^{2}d\tau
.
\end{equation*}%
Then, Gronwall's Lemma implies the inequality 
\begin{equation*}
\int_{0}^{t}\left\Vert \boldsymbol{u}_{\varepsilon ,\lambda }\right\Vert
_{L^{2}(\Omega )}^{2}d\tau \leq C\left\Vert \boldsymbol{w}\right\Vert
_{L^{2}(0,T;V)}^{2}\mbox{ for all }0\leq t\leq T,
\end{equation*}%
from which we infer that 
\begin{equation*}
\int_{0}^{T}\left\Vert \nabla \boldsymbol{u}_{\varepsilon ,\lambda
}\right\Vert _{L^{2}(\Omega )}^{2}d\tau \leq C\left\Vert \boldsymbol{w}%
\right\Vert _{L^{2}(0,T;V)}^{2}
\end{equation*}%
or equivalently, 
\begin{equation}
\left\Vert \boldsymbol{u}_{\varepsilon ,\lambda }\right\Vert
_{L^{2}(0,T;V)}\leq C\left\Vert \boldsymbol{w}\right\Vert _{L^{2}(0,T;V)}.
\label{6.1}
\end{equation}

Now, let $\varphi \in \mathcal{C}_{0}^{\infty }(0,T)\otimes \mathcal{V}$.
Then, 
\begin{eqnarray*}
\left\langle \frac{\partial \boldsymbol{u}_{\varepsilon ,\lambda }}{\partial
t},\varphi \right\rangle  &=&-\int_{Q}A_{0}^{\varepsilon }\nabla \boldsymbol{%
u}_{\varepsilon ,\lambda }\cdot \nabla \varphi
dxdt-\int_{Q}(A_{1}^{\varepsilon }\ast \nabla \boldsymbol{w})\cdot \nabla
\varphi dxdt \\
&&-\int_{Q}\left( \int_{\mathbb{R}^{N}}f_{\varepsilon ,\lambda }\gamma
_{\lambda }(v)dv\right) (\boldsymbol{w}\ast \theta _{\lambda })\cdot \varphi
dxdt
\end{eqnarray*}%
and 
\begin{eqnarray*}
\left\vert \left\langle \frac{\partial \boldsymbol{u}_{\varepsilon ,\lambda }%
}{\partial t},\varphi \right\rangle \right\vert  &\leq &C\left\Vert \nabla 
\boldsymbol{u}_{\varepsilon ,\lambda }\right\Vert _{L^{2}(Q)}\left\Vert
\nabla \varphi \right\Vert _{L^{2}(Q)}+\left\Vert A_{1}^{\varepsilon }\ast
\nabla \boldsymbol{w}\right\Vert _{L^{2}(Q)}\left\Vert \nabla \varphi
\right\Vert _{L^{2}(Q)}+C\left\Vert \boldsymbol{w}\ast \theta _{\lambda
}\right\Vert _{L^{2}(Q)}\left\Vert \varphi \right\Vert _{L^{2}(Q)} \\
&\leq &C\left\Vert \boldsymbol{w}\right\Vert _{L^{2}(0,T;V)}\left\Vert
\varphi \right\Vert _{L^{2}(0,T;V)}\mbox{ because of (\ref{6.1}),}
\end{eqnarray*}%
$C$ being a positive constant that does not depend on $\varphi $. Therefore,
it follows from the density of $\mathcal{C}_{0}^{\infty }(0,T)\otimes 
\mathcal{V}$ in $L^{2}(0,T;V)$ that $\partial \boldsymbol{u}_{\varepsilon
,\lambda }/\partial t\in L^{2}(0,T;V^{\prime })$ with 
\begin{equation}
\left\Vert \frac{\partial \boldsymbol{u}_{\varepsilon ,\lambda }}{\partial t}%
\right\Vert _{L^{2}(0,T;V^{\prime })}\leq C\left\Vert \boldsymbol{w}%
\right\Vert _{L^{2}(0,T;V)}.  \label{6.2}
\end{equation}

The inequality (\ref{6.1}) implies that $S$ sends continuously $L^{2}(0,T;V)$
into itself. Moreover (\ref{6.1}) and (\ref{6.2}) show that $S$ transforms
bounded sets in $L^{2}(0,T;V)$ into bounded sets in $W(0,T)=\{\boldsymbol{w}%
\in L^{2}(0,T;V):\partial \boldsymbol{w}/\partial t\in L^{2}(0,T;V^{\prime
})\}$ ($W(0,T)$ being endowed with the norm $\left\Vert \boldsymbol{w}%
\right\Vert _{W(0,T)}=[\left\Vert \boldsymbol{w}\right\Vert
_{L^{2}(0,T;V)}^{2}+\left\Vert \partial \boldsymbol{w}/\partial t\right\Vert
_{L^{2}(0,T;V^{\prime })}^{2}]^{1/2}$ which makes it a Hilbert space).
Furthermore, the range of $S$ is contained in $W(0,T)$ which is compact in $%
L^{2}(0,T;H)$ because of the Aubin-Lions lemma. Thus, the range of $S$ is
relatively compact in $L^{2}(0,T;H)$ and hence in $L^{2}(0,T;V)$ since the
latter space is closed in the former. Hence, by Schauder's fixed point
theorem, $S$ admits a fixed point.
\end{proof}

We have just proved the following result.

\begin{proposition}
\label{p2.2}For any fixed $\varepsilon >0$ and $\lambda >0$, the problem 
\emph{(\ref{r1'})-(\ref{r5'})} below%
\begin{equation}
\frac{\partial f_{\varepsilon ,\lambda }}{\partial t}+\varepsilon v\cdot
\nabla f_{\varepsilon ,\lambda }+\Div_{v}\left( (\boldsymbol{w}\ast \theta
_{\lambda }-v)f_{\varepsilon ,\lambda }\right) =0\mbox{ in }Q\times \mathbb{R%
}^{N}  \label{r1'}
\end{equation}%
\begin{equation}
\frac{\partial \boldsymbol{u}_{\varepsilon ,\lambda }}{\partial t}-\Div%
\left( A_{0}^{\varepsilon }\nabla \boldsymbol{u}_{\varepsilon ,\lambda
}+\int_{0}^{t}A_{1}^{\varepsilon }(t-\tau ,x)\nabla \boldsymbol{u}%
_{\varepsilon ,\lambda }(\tau ,x)d\tau \right) +\nabla p_{\varepsilon
,\lambda }=-\int_{\mathbb{R}^{N}}(\boldsymbol{u}_{\varepsilon ,\lambda }\ast
\theta _{\lambda }-v)\gamma _{\lambda }(v)f_{\varepsilon ,\lambda }dv%
\mbox{
in }Q,  \label{r2'}
\end{equation}%
\begin{equation}
\Div\boldsymbol{u}_{\varepsilon ,\lambda }=0\mbox{ in }Q,  \label{r3'}
\end{equation}%
\begin{equation}
\boldsymbol{u}_{\varepsilon ,\lambda }(0,x)=u^{0}(x),\ f_{\varepsilon
,\lambda }(0,x,v):=f_{\lambda }^{0}(x,v)=\gamma _{\lambda }(v)(f^{0}\ast
\Theta _{\lambda })(x,v),\ x\in \Omega ,v\in \mathbb{R}^{N},  \label{r4'}
\end{equation}%
\begin{equation}
\boldsymbol{u}_{\varepsilon ,\lambda }=0\mbox{ on }\partial \Omega 
\mbox{ and
}f_{\varepsilon ,\lambda }(t,x,v)=f_{\varepsilon ,\lambda }(t,x,v^{\ast })%
\mbox{ for }x\in \partial \Omega \mbox{ with }v\cdot \nu (x)<0  \label{r5'}
\end{equation}%
admits a unique solution $(\boldsymbol{u}_{\varepsilon ,\lambda
},f_{\varepsilon ,\lambda },p_{\varepsilon ,\lambda })$ such that $%
\boldsymbol{u}_{\varepsilon ,\lambda }\in L^{2}(0,T;V)$ with $\partial 
\boldsymbol{u}_{\varepsilon ,\lambda }/\partial t\in L^{2}(0,T;V^{\prime })$%
, $f_{\varepsilon ,\lambda }\in \mathcal{C}^{1}(Q\times \mathbb{R}^{N})$ and 
$p_{\varepsilon ,\lambda }\in L^{\infty }(0,T;L^{2}(\Omega )/\mathbb{R})$.
\end{proposition}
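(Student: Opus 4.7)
The plan is to assemble the ingredients already put in place in Sections 2.2--2.4. Proposition \ref{p2.1} produces a fixed point $\boldsymbol{u}_{\varepsilon,\lambda}\in L^{2}(0,T;V)$ of the map $S$; the a priori estimate \ref{6.2} read with $\boldsymbol{w}=\boldsymbol{u}_{\varepsilon,\lambda}$ then immediately gives $\partial_{t}\boldsymbol{u}_{\varepsilon,\lambda}\in L^{2}(0,T;V^{\prime})$. Feeding this fixed point as the drift into the regularized Vlasov equation (\ref{r1'}) and invoking Mischler's Theorem 4 together with the method of characteristics (exactly as in Section 2.2) yields the corresponding nonnegative $f_{\varepsilon,\lambda}\in \mathcal{C}^{1}(Q\times \mathbb{R}^{N})$, belonging moreover to $L^{\infty}(0,T;L^{\infty}(\Omega \times \mathbb{R}^{N})\cap L^{p}(\Omega \times \mathbb{R}^{N}))$. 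The pressure $p_{\varepsilon,\lambda}$ is then recovered in the standard de Rham fashion, as in Section 2.3, via Propositions 1.1 and 1.2 of Temam.

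For the asserted $L^{\infty}$-in-time regularity of $p_{\varepsilon,\lambda}$, I would read the momentum equation as
\begin{equation*}
\nabla p_{\varepsilon,\lambda}=-\frac{\partial \boldsymbol{u}_{\varepsilon,\lambda}}{\partial t}+\Div\left(A_{0}^{\varepsilon}\nabla \boldsymbol{u}_{\varepsilon,\lambda}+A_{1}^{\varepsilon}\ast \nabla \boldsymbol{u}_{\varepsilon,\lambda}\right)+\boldsymbol{F}_{\varepsilon,\lambda}
\end{equation*}
and verify that each term is uniformly controlled in $t$ in $H^{-1}(\Omega)^{N}$. The force field is uniformly bounded in $L^{\infty}(0,T;L^{2}(\Omega)^{N})$ thanks to the $L^{\infty}$-bound (\ref{r17}) on $f_{\varepsilon,\lambda}$, the compact $v$-support enforced by $\gamma_{\lambda}$, and $\boldsymbol{u}_{\varepsilon,\lambda}\in \mathcal{C}([0,T];H)$ coming from Lions' theorem; the memory term $A_{1}^{\varepsilon}\ast \nabla \boldsymbol{u}_{\varepsilon,\lambda}$ stays bounded uniformly in $t$ in $L^{2}(\Omega)^{N^{2}}$ by assumption \textbf{(A1)} and Cauchy--Schwarz in the time convolution. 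Combined with (\ref{r25}) this upgrades the almost-everywhere-in-$t$ bound on $\|p_{\varepsilon,\lambda}(t)\|_{L^{2}(\Omega)/\mathbb{R}}$ into a genuine $L^{\infty}$-in-$t$ bound.

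The one genuinely new ingredient is uniqueness, since Schauder's theorem supplies none. Given two solutions, I would set $\boldsymbol{u}=\boldsymbol{u}_{\varepsilon,\lambda}^{1}-\boldsymbol{u}_{\varepsilon,\lambda}^{2}$ and $f=f_{\varepsilon,\lambda}^{1}-f_{\varepsilon,\lambda}^{2}$, and exploit that, along the smooth characteristics associated to the Lipschitz drift $\boldsymbol{u}_{\varepsilon,\lambda}^{i}\ast \theta_{\lambda}-v$, the difference $f$ is controlled pointwise by a time-integral of $|\boldsymbol{u}\ast \theta_{\lambda}|$, yielding $\|f(t)\|_{L^{\infty}}\leq C\int_{0}^{t}\|\boldsymbol{u}(\tau)\|_{H}\,d\tau$. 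Subtracting the two Stokes equations, testing against $\boldsymbol{u}$ and running the very same manipulations as in the proof of Proposition \ref{p2.1} (now with $\boldsymbol{w}=\boldsymbol{u}$) yields an inequality to which Gronwall's lemma applies and forces $\boldsymbol{u}\equiv 0$, hence $f\equiv 0$ and $p\equiv 0$. The main obstacle here lies in the careful book-keeping of the specular reflection boundary condition (\ref{r5'}) along the characteristic flow: one must verify that this reflection preserves the Lipschitz-type stability estimate for $f$ in terms of $\boldsymbol{u}$, so that the Gronwall closure in the Stokes energy balance actually goes through.
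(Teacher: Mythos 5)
Your existence argument coincides with the paper's: Proposition \ref{p2.2} is obtained there exactly as you describe, by concatenating Subsections 2.2--2.4 (Mischler's theorem and characteristics for $f_{\varepsilon,\lambda}$, Lions' theorem and Temam's de Rham argument for $(\boldsymbol{u}_{\varepsilon,\lambda},p_{\varepsilon,\lambda})$) with the Schauder fixed point of Proposition \ref{p2.1}, reading (\ref{6.2}) at the fixed point. However, your justification of $p_{\varepsilon,\lambda}\in L^{\infty}(0,T;L^{2}(\Omega)/\mathbb{R})$ does not go through: in the identity $\nabla p_{\varepsilon,\lambda}=-\partial_{t}\boldsymbol{u}_{\varepsilon,\lambda}+\Div\left(A_{0}^{\varepsilon}\nabla\boldsymbol{u}_{\varepsilon,\lambda}+A_{1}^{\varepsilon}\ast\nabla\boldsymbol{u}_{\varepsilon,\lambda}\right)+\boldsymbol{F}_{\varepsilon,\lambda}$, the terms $\partial_{t}\boldsymbol{u}_{\varepsilon,\lambda}$ and $\Div(A_{0}^{\varepsilon}\nabla\boldsymbol{u}_{\varepsilon,\lambda})$ are controlled in $H^{-1}(\Omega)^{N}$ only by quantities of the type $\Vert\nabla\boldsymbol{u}_{\varepsilon,\lambda}(t)\Vert_{L^{2}(\Omega)}$, which are merely square integrable in $t$; your claim that ``each term is uniformly controlled in $t$ in $H^{-1}$'' is false for these two terms, so (\ref{r25}) only yields an $L^{2}$-in-time bound on the pressure (which is in fact all that the paper ever uses, cf. (\ref{2.5}) and Theorem \ref{t2.1}).

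On uniqueness, you are right that Schauder supplies none, and in fairness the paper offers no separate uniqueness argument either (Proposition \ref{p2.2} is stated right after Proposition \ref{p2.1}; the Lipschitz-type bound (\ref{6.1}), whose constant need not be below one, cannot give it). Your Gronwall scheme is the natural repair, but as sketched it leaves open exactly the hard points. First, the stability estimate $\Vert f_{\varepsilon,\lambda}^{1}(t)-f_{\varepsilon,\lambda}^{2}(t)\Vert_{L^{\infty}}\leq C\int_{0}^{t}\Vert\boldsymbol{u}(\tau)\Vert_{H}\,d\tau$ requires a uniform-in-time $L^{\infty}$ bound on $\nabla_{v}f_{\varepsilon,\lambda}^{2}$, i.e. $\mathcal{C}^{1}$ regularity of the specularly reflected characteristic flow; this is precisely the obstacle you name and do not resolve (grazing reflections are where such regularity can fail), so the key inequality of your argument remains unproved. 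Second, the difference of the two Stokes equations contains, besides the terms appearing in the proof of Proposition \ref{p2.1}, the coupling term $-\int_{\mathbb{R}^{N}}(\boldsymbol{u}_{\varepsilon,\lambda}^{2}\ast\theta_{\lambda}-v)\gamma_{\lambda}(v)(f_{\varepsilon,\lambda}^{1}-f_{\varepsilon,\lambda}^{2})\,dv$, which the ``very same manipulations'' you invoke do not treat (the difference system written in Proposition \ref{p2.1} takes $f_{\varepsilon,\lambda}$ independent of $\boldsymbol{w}$); it can be absorbed into the Gronwall closure using the $L^{\infty}$ stability bound and the compact support of $\gamma_{\lambda}$, but that step must be carried out and is missing. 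As it stands, the proposal reproduces the paper's existence proof but does not establish the uniqueness or the $L^{\infty}$-in-time pressure regularity asserted in the statement.
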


The following uniform estimates hold true.

\begin{lemma}
\label{l2.1}Let $(\boldsymbol{u}_{\varepsilon ,\lambda },f_{\varepsilon
,\lambda },p_{\varepsilon ,\lambda })$ be the solution to \emph{(\ref{r1'})-(%
\ref{r5'})}. Then, 
\begin{equation}
\int_{\Omega \times \mathbb{R}^{N}}(1+\left\vert v\right\vert
^{2})f_{\varepsilon ,\lambda }dxdv+\int_{\Omega }\left\vert \boldsymbol{u}%
_{\varepsilon ,\lambda }\right\vert ^{2}dx+2\int_{0}^{t}\int_{\Omega \times 
\mathbb{R}^{N}}f_{\varepsilon ,\lambda }\left\vert \boldsymbol{u}%
_{\varepsilon ,\lambda }\ast \theta _{\lambda }-v\right\vert ^{2}dxdvd\tau
+\int_{0}^{t}\left\Vert \nabla \boldsymbol{u}_{\varepsilon ,\lambda }(\tau
)\right\Vert _{L^{2}(\Omega )}^{2}d\tau \leq C  \label{2.3}
\end{equation}%
for any $0\leq t\leq T$,\ $\varepsilon >0$ and $\lambda >0$, where $C>0$ is
independent of both $\lambda $ and $\varepsilon $. Moreover if $f^{0}\in
L^{p}(\Omega \times \mathbb{R}^{N})$, \emph{(}$1\leq p\leq \infty $\emph{)},
then 
\begin{equation}
\left\Vert f_{\varepsilon ,\lambda }\right\Vert _{L^{\infty
}(0,T;L^{p}(\Omega \times \mathbb{R}^{N}))}\leq \exp (NT)\left\Vert
f^{0}\right\Vert _{L^{p}(\Omega \times \mathbb{R}^{N})}\mbox{ for any }%
\lambda ,\varepsilon >0.  \label{2.4}
\end{equation}%
It also holds that 
\begin{equation}
\left\Vert \frac{\partial \boldsymbol{u}_{\varepsilon ,\lambda }}{\partial t}%
\right\Vert _{L^{2}(0,T;H^{-1}(\Omega )^{N})}\leq C  \label{2.4'}
\end{equation}%
and%
\begin{equation}
\sup_{\lambda ,\varepsilon >0}\left\Vert p_{\varepsilon ,\lambda
}\right\Vert _{L^{2}(0,T;L^{2}(\Omega ))}\leq C.  \label{2.5}
\end{equation}
\end{lemma}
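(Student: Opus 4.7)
The plan is to prove the estimates one at a time, starting with the easiest. The $L^p$-bound \eqref{2.4} is immediate from the estimate \eqref{r9}, which was derived for an arbitrary $\boldsymbol{w}\in L^2(0,T;V)$ and specialises, with no change, to the Schauder fixed point $\boldsymbol{w}=\boldsymbol{u}_{\varepsilon,\lambda}$. To derive the energy estimate \eqref{2.3}, I would combine two test-function identities. First I would test the Vlasov equation \eqref{r1'} against $1+|v|^2$ and integrate over $\Omega\times\mathbb{R}^N$; the $x$-transport term yields a boundary integral on $\partial\Omega\times\mathbb{R}^N$ which I would show vanishes by observing that the involution $v\mapsto v^{*}=v-2(v\cdot\nu)\nu$ preserves $|v|$ and the $v$-Lebesgue measure while reversing $v\cdot\nu$; the specular condition \eqref{r5'}b then forces the $\Sigma^+$ and $\Sigma^-$ contributions to cancel. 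Integration by parts in $v$ would produce
\begin{equation*}
\frac{d}{dt}\int_{\Omega\times\mathbb{R}^N}(1+|v|^2)f_{\varepsilon,\lambda}\,dxdv+2\int f_{\varepsilon,\lambda}|v|^2\,dxdv=2\int f_{\varepsilon,\lambda}(\boldsymbol{u}_{\varepsilon,\lambda}\ast\theta_\lambda)\cdot v\,dxdv.
\end{equation*}

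Next I would test \eqref{r2'} against $2\boldsymbol{u}_{\varepsilon,\lambda}$, invoke the coercivity in (\textbf{A1}), and add the resulting inequality to the identity above. Regrouping the cross-products reconstructs the drag dissipation $2\int f_{\varepsilon,\lambda}|v-\boldsymbol{u}_{\varepsilon,\lambda}\ast\theta_\lambda|^2$ up to a residual
$$R_\lambda=2\int f_{\varepsilon,\lambda}(v-\boldsymbol{u}_{\varepsilon,\lambda}\ast\theta_\lambda)\cdot\Bigl[(\boldsymbol{u}_{\varepsilon,\lambda}-\boldsymbol{u}_{\varepsilon,\lambda}\ast\theta_\lambda)-(1-\gamma_\lambda)\boldsymbol{u}_{\varepsilon,\lambda}\Bigr]\,dxdv$$
produced by the mollifier defect and by the cutoff factor $1-\gamma_\lambda$ (supported in $\{|v|\geq 1/\lambda\}$). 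I would apply a Cauchy--Schwarz/Young inequality to $R_\lambda$ to absorb half of the drag dissipation, and bound what remains using the uniform $L^\infty$-bound \eqref{r17} on $f_{\varepsilon,\lambda}$, the mollifier contraction $\|\boldsymbol{u}\ast\theta_\lambda\|_{L^2}\leq\|\boldsymbol{u}\|_{L^2}$, a Sobolev embedding in $x$ to handle $\rho(f_{\varepsilon,\lambda})=\int f_{\varepsilon,\lambda}\,dv$, and the pointwise relation $1\leq\lambda^2|v|^2$ on $\mathrm{supp}(1-\gamma_\lambda)$, which absorbs the cutoff residue into the $\int f_{\varepsilon,\lambda}|v|^2$ term already on the left. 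The memory term $\int(A_1^\varepsilon\ast\nabla\boldsymbol{u}_{\varepsilon,\lambda})\cdot\nabla\boldsymbol{u}_{\varepsilon,\lambda}$ would be handled as in Proposition~\ref{p2.1}, via Cauchy--Schwarz in the time convolution and Young's inequality, absorbing another portion of the viscous dissipation. Setting $\mathcal{E}(t)=\|\boldsymbol{u}_{\varepsilon,\lambda}(t)\|_{L^2}^2+\int(1+|v|^2)f_{\varepsilon,\lambda}(t)\,dxdv$, whose initial value is controlled by (\textbf{A2}), \eqref{r6}, and \eqref{r10}, Gronwall's lemma would then deliver \eqref{2.3}.

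The estimates \eqref{2.4'} and \eqref{2.5} should follow routinely from \eqref{2.3}. For \eqref{2.4'} I would isolate $\partial_t\boldsymbol{u}_{\varepsilon,\lambda}$ in \eqref{r2'}, test against divergence-free fields to kill the pressure gradient, and bound the three remaining pieces in $L^2(0,T;H^{-1}(\Omega)^N)$ uniformly by combining (\textbf{A1}) with \eqref{2.3}, by \eqref{r20} applied at the fixed point, and by \eqref{r18} reinforced with \eqref{r17} and \eqref{2.3}. For \eqref{2.5}, the identity
$$\nabla p_{\varepsilon,\lambda}=-\partial_t\boldsymbol{u}_{\varepsilon,\lambda}+\Div(A_0^\varepsilon\nabla\boldsymbol{u}_{\varepsilon,\lambda})+\Div(A_1^\varepsilon\ast\nabla\boldsymbol{u}_{\varepsilon,\lambda})-\boldsymbol{G}_{\varepsilon,\lambda}$$
exhibits $\nabla p_{\varepsilon,\lambda}$ as a uniformly $L^2(0,T;H^{-1}(\Omega)^N)$-valued function, and the Ne\v{c}as inequality \eqref{r25} converts this into the announced pressure bound.

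The main obstacle will be the uniform, in $(\varepsilon,\lambda)$, control of the residual $R_\lambda$: closing Gronwall hinges on simultaneously exploiting the $L^\infty$-bound on $f_{\varepsilon,\lambda}$, the $L^2$-contractivity of the mollifier, a Sobolev embedding in the $x$-variable (to dispense with the possibly non-uniform density $\rho(f_{\varepsilon,\lambda})$), and the support property of $1-\gamma_\lambda$, so that every bad term is absorbed either into $\alpha\|\nabla\boldsymbol{u}_{\varepsilon,\lambda}\|^2$, into the drag dissipation itself, or into the $|v|^2$-moment term that is part of $\mathcal{E}$.
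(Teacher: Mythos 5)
Your skeleton coincides with the paper's proof: \eqref{2.4} is read off from \eqref{r9}; the energy bound \eqref{2.3} is obtained by testing the Vlasov equation with $\tfrac12|v|^{2}$ and the Stokes equation with $\boldsymbol{u}_{\varepsilon,\lambda}$, the $x$-boundary term being killed exactly as you describe by the specular reflection (the involution $v\mapsto v^{*}$ preserves $|v|$ and $dv$ and reverses $v\cdot\nu$); the memory term is absorbed by the same Young-type estimate as in the proof of Proposition \ref{p2.1}; Gronwall then closes the estimate (the paper applies Gronwall to $t\mapsto\int_{0}^{t}\|\nabla\boldsymbol{u}_{\varepsilon,\lambda}\|_{L^{2}(\Omega)}^{2}d\tau$ rather than to your $\mathcal{E}(t)$, an immaterial difference); \eqref{2.4'} is obtained exactly as \eqref{6.2}, and \eqref{2.5} from the pressure representation together with \eqref{r25}, as you propose.

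The genuine divergence is the coupling term, and this is where your proposal has a gap. The paper carries no residual: in its computation the force terms recombine into the exact drag dissipation $\int f_{\varepsilon,\lambda}|\boldsymbol{u}_{\varepsilon,\lambda}\ast\theta_{\lambda}-v|^{2}$, so only the memory term needs absorbing (you are right that, taken literally, the drag in \eqref{r2'} carries the factor $\gamma_{\lambda}$ and is paired with $\boldsymbol{u}_{\varepsilon,\lambda}$ rather than $\boldsymbol{u}_{\varepsilon,\lambda}\ast\theta_{\lambda}$, a point the paper's identity silently suppresses; but the paper's proof simply does not go through your $R_{\lambda}$). Your argument, by contrast, stands or falls with the absorption of $R_{\lambda}$, and as sketched it does not close. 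After Cauchy--Schwarz/Young, the mollifier-defect part leaves $\int_{\Omega}\rho_{\varepsilon,\lambda}\,|\boldsymbol{u}_{\varepsilon,\lambda}-\boldsymbol{u}_{\varepsilon,\lambda}\ast\theta_{\lambda}|^{2}dx$ with $\rho_{\varepsilon,\lambda}=\int f_{\varepsilon,\lambda}dv$. The $L^{\infty}$ bound \eqref{r17} does not control $\rho_{\varepsilon,\lambda}$; the only control of $\rho_{\varepsilon,\lambda}$ in a Lebesgue space better than $L^{1}$ comes from interpolation against the second velocity moment, which is itself part of the quantity being estimated, and the Sobolev step then produces a bound of the form $C\,M_{2}(t)^{3/5}\|\nabla\boldsymbol{u}_{\varepsilon,\lambda}(t)\|_{L^{2}}^{2}$ (for $N=3$), a coefficient times the dissipation rate which can neither be absorbed into $\alpha\|\nabla\boldsymbol{u}_{\varepsilon,\lambda}\|^{2}$ nor handled by a linear Gronwall inequality; nor is there smallness uniform over all $\lambda>0$, since $\|\boldsymbol{u}_{\varepsilon,\lambda}-\boldsymbol{u}_{\varepsilon,\lambda}\ast\theta_{\lambda}\|_{L^{6}}$ is only bounded, not small (a quantitative route via $\|\boldsymbol{u}-\boldsymbol{u}\ast\theta_{\lambda}\|_{L^{2}}\leq C\lambda\|\nabla\boldsymbol{u}\|_{L^{2}}$ plus a continuation argument would at best give the estimate for $\lambda$ small, and you do not carry it out). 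Similarly, using $1\leq\lambda^{2}|v|^{2}$ on $\mathrm{supp}(1-\gamma_{\lambda})$ trades the cutoff residue for velocity moments strictly above two, and the only bound on such moments at the fixed point, \eqref{r14}, involves $\|\boldsymbol{u}_{\varepsilon,\lambda}\|_{L^{2}(0,T;V)}$, i.e.\ exactly what \eqref{2.3} is supposed to deliver, so that step is circular. To repair the argument you should either set up the balance so that the coupling yields the perfect square with no residual (as the paper does), or supply a genuine quantitative absorption of $R_{\lambda}$, which your listed tools do not provide.
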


\begin{proof}
The inequality (\ref{2.4}) has already been obtained (see Eq. (\ref{r9})).
Let us now check (\ref{2.3}). We multiply (\ref{1.1}) by $\frac{1}{2}%
\left\vert v\right\vert ^{2}$ and (\ref{1.2}) by $\boldsymbol{u}%
_{\varepsilon ,\lambda }$, and we get 
\begin{equation}
\frac{1}{2}\frac{d}{dt}\left[ \int_{\Omega \times \mathbb{R}^{N}}\left\vert
v\right\vert ^{2}f_{\varepsilon ,\lambda }dxdv+\int_{\Omega }\left\vert 
\boldsymbol{u}_{\varepsilon ,\lambda }\right\vert ^{2}dx\right]
+\int_{\Omega }(A_{0}^{\varepsilon }\nabla \boldsymbol{u}_{\varepsilon
,\lambda }+A_{1}^{\varepsilon }\ast \nabla \boldsymbol{u}_{\varepsilon
,\lambda })\cdot \nabla \boldsymbol{u}_{\varepsilon ,\lambda
}dx+\int_{\Omega }E_{\varepsilon ,\lambda }(t,x)dx=0,  \label{2.7}
\end{equation}%
where we set 
\begin{equation*}
E_{\varepsilon ,\lambda }(t,x)=\int_{\mathbb{R}^{N}}f_{\varepsilon ,\lambda
}(\boldsymbol{u}_{\varepsilon ,\lambda }\ast \theta _{\lambda }-v)\cdot 
\boldsymbol{u}_{\varepsilon ,\lambda }dv+\frac{\varepsilon }{2}\int_{\mathbb{%
R}^{N}}(v\cdot \nabla f_{\varepsilon ,\lambda })\left\vert v\right\vert
^{2}dv+\frac{1}{2}\int_{\mathbb{R}^{N}}\left\vert v\right\vert ^{2}\Div_{v}((%
\boldsymbol{u}_{\varepsilon ,\lambda }\ast \theta _{\lambda
}-v)f_{\varepsilon ,\lambda })dv.
\end{equation*}%
But 
\begin{eqnarray*}
\int_{\Omega }\int_{\mathbb{R}^{N}}(v\cdot \nabla f_{\varepsilon ,\lambda
})\left\vert v\right\vert ^{2}dvdx &=&\int_{\mathbb{R}^{N}}\int_{\partial
\Omega }f_{\varepsilon ,\lambda }\left\vert v\right\vert ^{2}(v\cdot \nu
)d\sigma dv \\
&=&\int_{\{v\cdot \nu >0\}}f_{\varepsilon ,\lambda }\left\vert v\right\vert
^{2}(v\cdot \nu )d\sigma dv+\int_{\{v\cdot \nu <0\}}f_{\varepsilon ,\lambda
}\left\vert v\right\vert ^{2}(v\cdot \nu )d\sigma dv.
\end{eqnarray*}%
Since $v^{\ast }=v-2(v\cdot \nu )\nu $, it holds that $v^{\ast }\cdot \nu
=-v\cdot \nu $, $\left\vert v^{\ast }\right\vert ^{2}=\left\vert
v\right\vert ^{2}$ and $dv^{\ast }=dv$. Thus, because of the reflection
condition (\ref{r5'}) on $f_{\varepsilon ,\lambda }$, we have 
\begin{equation*}
\int_{\{v\cdot \nu <0\}}f_{\varepsilon ,\lambda }\left\vert v\right\vert
^{2}(v\cdot \nu )d\sigma dv=-\int_{\{v^{\ast }\cdot \nu >0\}}f_{\varepsilon
,\lambda }(t,x,v^{\ast })\left\vert v^{\ast }\right\vert ^{2}(v^{\ast }\cdot
\nu )d\sigma dv^{\ast },
\end{equation*}%
so that 
\begin{equation*}
\int_{\mathbb{R}^{N}}\int_{\partial \Omega }f_{\varepsilon ,\lambda
}\left\vert v\right\vert ^{2}(v\cdot \nu )d\sigma dv=0.
\end{equation*}%
Also, the following identity holds 
\begin{equation*}
\int_{\mathbb{R}^{N}}\left\vert v\right\vert ^{2}\Div_{v}((\boldsymbol{u}%
_{\varepsilon ,\lambda }\ast \theta _{\lambda }-v)f_{\varepsilon ,\lambda
})dv=-2\int_{\mathbb{R}^{N}}f_{\varepsilon ,\lambda }(\boldsymbol{u}%
_{\varepsilon ,\lambda }\ast \theta _{\lambda }-v)\cdot vdv.
\end{equation*}%
It therefore follows that 
\begin{equation*}
\int_{\Omega }E_{\varepsilon ,\lambda }(t,x)dx=\int_{\Omega \times \mathbb{R}%
^{N}}f_{\varepsilon ,\lambda }\left\vert \boldsymbol{u}_{\varepsilon
,\lambda }\ast \theta _{\lambda }-v\right\vert ^{2}dxdv.
\end{equation*}%
Integrating Eq.(\ref{2.7}) over $(0,t)$ and using the assumption (\textbf{A1}%
), we are lead to 
\begin{equation*}
\begin{array}{l}
\int_{\Omega \times \mathbb{R}^{N}}\left\vert v\right\vert
^{2}f_{\varepsilon ,\lambda }dxdv+2\int_{0}^{t}\int_{\Omega \times \mathbb{R}%
^{N}}f_{\varepsilon ,\lambda }\left\vert \boldsymbol{u}_{\varepsilon
,\lambda }\ast \theta _{\lambda }-v\right\vert ^{2}dxdvd\tau +\int_{\Omega
}\left\vert \boldsymbol{u}_{\varepsilon ,\lambda }\right\vert ^{2}dx+2\alpha
\int_{0}^{t}\int_{\Omega }\left\vert \nabla \boldsymbol{u}_{\varepsilon
,\lambda }\right\vert ^{2}dxd\tau \\ 
\ \ \ \leq -2\int_{0}^{t}\int_{\Omega }(A_{1}^{\varepsilon }\ast \nabla 
\boldsymbol{u}_{\varepsilon ,\lambda })\cdot \nabla \boldsymbol{u}%
_{\varepsilon ,\lambda }dxd\tau +\int_{\Omega \times \mathbb{R}%
^{N}}\left\vert v\right\vert ^{2}f_{\lambda }^{0}dxdv+\int_{\Omega
}\left\vert \boldsymbol{u}^{0}\right\vert ^{2}dx.%
\end{array}%
\end{equation*}%
Now, using Young's inequality, we infer that 
\begin{equation*}
\begin{array}{l}
2\int_{0}^{t}\int_{\Omega }(A_{1}^{\varepsilon }\ast \nabla \boldsymbol{u}%
_{\varepsilon ,\lambda })\cdot \nabla \boldsymbol{u}_{\varepsilon ,\lambda
}dxd\tau =2\int_{0}^{t}\left( \int_{0}^{\tau }\left( \int_{\Omega
}A_{1}^{\varepsilon }(\tau -s)\nabla \boldsymbol{u}_{\varepsilon ,\lambda
}(s)\cdot \nabla \boldsymbol{u}_{\varepsilon ,\lambda }(\tau )dx)\right)
ds\right) d\tau \\ 
\ \ \ \ \ \ \leq \int_{0}^{t}\left( \int_{0}^{\tau }\frac{\tau }{\alpha }%
\left\Vert A_{1}^{\varepsilon }(\tau -s)\nabla \boldsymbol{u}_{\varepsilon
,\lambda }(s)\right\Vert _{L^{2}(\Omega )}^{2}ds+\int_{0}^{\tau }\frac{%
\alpha }{\tau }\left\Vert \nabla \boldsymbol{u}_{\varepsilon ,\lambda }(\tau
)\right\Vert _{L^{2}(\Omega )}^{2}ds\right) d\tau \\ 
\ \ \ \ \ \ \leq \int_{0}^{t}\left( \int_{0}^{\tau }\frac{c_{1}}{\alpha }%
\tau \left\Vert \nabla \boldsymbol{u}_{\varepsilon ,\lambda }(s)\right\Vert
_{L^{2}(\Omega )}^{2}ds+\int_{0}^{\tau }\frac{\alpha }{\tau }\left\Vert
\nabla \boldsymbol{u}_{\varepsilon ,\lambda }(\tau )\right\Vert
_{L^{2}(\Omega )}^{2}ds\right) d\tau \\ 
\ \ \ \ \ \ \ =\alpha \int_{0}^{t}\left\Vert \nabla \boldsymbol{u}%
_{\varepsilon ,\lambda }(\tau )\right\Vert _{L^{2}(\Omega )}^{2}d\tau
+\int_{0}^{t}\left( \frac{c_{1}}{\alpha }\tau \int_{0}^{\tau }\left\Vert
\nabla \boldsymbol{u}_{\varepsilon ,\lambda }(s)\right\Vert _{L^{2}(\Omega
)}^{2}ds\right) d\tau \\ 
\ \ \ \ \ \ \ \leq \alpha \int_{0}^{t}\left\Vert \nabla \boldsymbol{u}%
_{\varepsilon ,\lambda }(\tau )\right\Vert _{L^{2}(\Omega )}^{2}d\tau +\frac{%
c_{1}}{\alpha }t\int_{0}^{t}\left( \int_{0}^{\tau }\left\Vert \nabla 
\boldsymbol{u}_{\varepsilon ,\lambda }(s)\right\Vert _{L^{2}(\Omega
)}^{2}ds\right) d\tau \mbox{ since }0\leq \tau \leq t,%
\end{array}%
\end{equation*}%
where $c_{1}=\sup_{(t,x)\overline{Q}}\left\Vert A_{1}(t,x,\cdot ,\cdot
)\right\Vert _{L^{\infty }(\mathbb{R}_{y,\tau }^{N+1})^{N^{2}}}^{2}<\infty $%
. Thus 
\begin{equation}
\begin{array}{l}
\int_{\Omega \times \mathbb{R}^{N}}\left\vert v\right\vert
^{2}f_{\varepsilon ,\lambda }dxdv+2\int_{0}^{t}\int_{\Omega \times \mathbb{R}%
^{N}}f_{\varepsilon ,\lambda }\left\vert \boldsymbol{u}_{\varepsilon
,\lambda }\ast \theta _{\lambda }-v\right\vert ^{2}\;dxdvd\tau +\int_{\Omega
}\left\vert \boldsymbol{u}_{\varepsilon ,\lambda }\right\vert ^{2}dx+\alpha
\int_{0}^{t}\left\Vert \nabla \boldsymbol{u}_{\varepsilon ,\lambda
}\right\Vert _{L^{2}(\Omega )}^{2}d\tau \\ 
\ \ \ \leq \int_{\Omega \times \mathbb{R}^{N}}\left\vert v\right\vert
^{2}f_{\lambda }^{0}dxdv+\int_{\Omega }\left\vert \boldsymbol{u}%
^{0}\right\vert ^{2}dx+\frac{c_{1}}{\alpha }t\int_{0}^{t}\left(
\int_{0}^{\tau }\left\Vert \nabla \boldsymbol{u}_{\varepsilon ,\lambda
}(s)\right\Vert _{L^{2}(\Omega )}^{2}ds\right) d\tau .%
\end{array}
\label{2.8}
\end{equation}%
We infer from Eq.(\ref{2.8}) that 
\begin{equation*}
\alpha \int_{0}^{t}\left\Vert \nabla \boldsymbol{u}_{\varepsilon ,\lambda
}\right\Vert _{L^{2}(\Omega )}^{2}d\tau \leq c_{2}+\frac{c_{1}}{\alpha }%
t\int_{0}^{t}\left( \int_{0}^{\tau }\left\Vert \nabla \boldsymbol{u}%
_{\varepsilon ,\lambda }(s)\right\Vert _{L^{2}(\Omega )}^{2}ds\right) d\tau
\end{equation*}%
where 
\begin{equation*}
c_{2}=\int_{\Omega \times \mathbb{R}^{N}}\left\vert v\right\vert
^{2}f_{\lambda }^{0}dxdv+\int_{\Omega }\left\vert \boldsymbol{u}%
^{0}\right\vert ^{2}dx\leq C(N,p)\left\Vert f^{0}\right\Vert _{L^{p}(\Omega
\times \mathbb{R}^{N})}+\int_{\Omega }\left\vert \boldsymbol{u}%
^{0}\right\vert ^{2}dx<\infty .
\end{equation*}%
It readily follows from Gronwall's inequality that 
\begin{eqnarray*}
\int_{0}^{t}\left\Vert \nabla \boldsymbol{u}_{\varepsilon ,\lambda
}\right\Vert _{L^{2}(\Omega )}^{2}d\tau &\leq &\exp \left( \int_{0}^{t}\frac{%
c_{1}\tau }{\alpha ^{2}}d\tau \right) \left[ \int_{0}^{t}\frac{c_{2}}{\alpha 
}\exp \left( -\int_{0}^{s}\frac{c_{1}\tau }{\alpha ^{2}}d\tau \right) ds%
\right] \\
&=&\exp \left( \frac{c_{1}t^{2}}{2\alpha ^{2}}\right) \left[ \frac{c_{2}}{%
\alpha }\int_{0}^{t}\exp \left( -\frac{c_{1}}{2\alpha ^{2}}s^{2}\right) ds%
\right] \\
&\leq &\frac{c_{2}}{\alpha }\exp \left( \frac{c_{1}t^{2}}{2\alpha ^{2}}%
\right) \int_{0}^{\infty }\exp \left( -\frac{c_{1}}{2\alpha ^{2}}%
s^{2}\right) ds \\
&=&\frac{c_{2}}{2}\sqrt{\frac{2\pi }{c_{1}}}\exp \left( \frac{c_{1}t^{2}}{%
2\alpha ^{2}}\right) \mbox{ for all }0\leq t\leq T.
\end{eqnarray*}%
Setting 
\begin{equation*}
c_{3}=\frac{c_{2}}{2}\sqrt{\frac{2\pi }{c_{1}}}\exp \left( \frac{c_{1}T^{2}}{%
2\alpha ^{2}}\right) ,
\end{equation*}%
we get 
\begin{equation*}
\int_{0}^{t}\left\Vert \nabla \boldsymbol{u}_{\varepsilon ,\lambda
}\right\Vert _{L^{2}(\Omega )}^{2}d\tau \leq c_{3},
\end{equation*}%
and the above inequality entails 
\begin{equation*}
\int_{\Omega \times \mathbb{R}^{N}}\left\vert v\right\vert
^{2}f_{\varepsilon ,\lambda }dxdv+\int_{\Omega }\left\vert \boldsymbol{u}%
_{\varepsilon ,\lambda }\right\vert ^{2}dx+2\int_{0}^{t}\int_{\Omega \times 
\mathbb{R}^{N}}f_{\varepsilon ,\lambda }\left\vert \boldsymbol{u}%
_{\varepsilon ,\lambda }\ast \theta _{\lambda }-v\right\vert ^{2}dxdvd\tau
\leq c_{2}+c_{3}T.
\end{equation*}%
We deduce Eq. (\ref{2.3}) by letting $C=c_{2}+c_{3}T$.

The uniform estimate (\ref{2.4'}) is obtained as Eq. (\ref{6.2}), and Eq. (%
\ref{2.5}) follows in a trivial manner (see e.g. \cite{JMS}). This concludes
the proof.
\end{proof}

\subsection{Passing to the limit $\protect\lambda \rightarrow 0$}

\label{sol} We wish to pass to the limit as $\lambda \rightarrow 0$ in the
sequence of solutions $(\boldsymbol{u}_{\varepsilon ,\lambda
},f_{\varepsilon ,\lambda })$ in order to prove the existence of the
solution to our initial problem (\ref{1.1})-(\ref{1.5}). Owing to Lemma \ref%
{l2.1}, we have 
\begin{equation*}
\left\Vert f_{\varepsilon ,\lambda }\right\Vert _{L^{\infty
}(0,T;L^{p}(\Omega \times \mathbb{R}^{N}))}\leq C\mbox{ for all }1\leq p\leq
\infty ,
\end{equation*}%
\begin{equation*}
\left\Vert \boldsymbol{u}_{\varepsilon ,\lambda }\right\Vert _{L^{\infty
}(0,T;L^{2}(\Omega )^{N})}\leq C,\ \left\Vert \nabla \boldsymbol{u}%
_{\varepsilon ,\lambda }\right\Vert _{L^{2}(Q)}\leq C\mbox{ and }\
\left\Vert \frac{\partial \boldsymbol{u}_{\varepsilon ,\lambda }}{\partial t}%
\right\Vert _{L^{2}(0,T;H^{-1}(\Omega )^{N})}\leq C.
\end{equation*}%
Using the above uniform estimates (in $\lambda $), we deduce that, given an
ordinary sequence $\lambda =(\lambda _{n})_{n}$ (with $0<\lambda _{n}\leq 1$%
, $\lambda _{n}\rightarrow 0$ when $n\rightarrow \infty $, which we denote
by $\lambda \rightarrow 0$), there exist a subsequence of $\lambda $ (still
denoted by $\lambda $), functions $\boldsymbol{u}_{\varepsilon }\in
L^{2}(0,T;V)\cap L^{\infty }(0,T;H)$, $f_{\varepsilon }\in L^{\infty
}(0,T;L^{p}(\Omega \times \mathbb{R}^{N}))$ and $p_{\varepsilon }\in
L^{2}(0,T;L^{2}(\Omega )/\mathbb{R})$ such that, as $\lambda \rightarrow 0$, 
\begin{equation}
f_{\varepsilon ,\lambda }\rightarrow f_{\varepsilon }\mbox{ in }L^{\infty
}(0,T;L^{p}(\Omega \times \mathbb{R}^{N}))\mbox{-weak }\ast ,  \label{6.3}
\end{equation}%
\begin{equation}
\boldsymbol{u}_{\varepsilon ,\lambda }\rightarrow \boldsymbol{u}%
_{\varepsilon }\mbox{ in }L^{2}(0,T;V)\mbox{-weak},  \label{6.4'}
\end{equation}%
\begin{equation}
\boldsymbol{u}_{\varepsilon ,\lambda }\rightarrow \boldsymbol{u}%
_{\varepsilon }\mbox{ in }L^{2}(0,T;H)\mbox{-strong},  \label{6.4}
\end{equation}%
\begin{equation}
\frac{\partial \boldsymbol{u}_{\varepsilon ,\lambda }}{\partial t}%
\rightarrow \frac{\partial \boldsymbol{u}_{\varepsilon }}{\partial t}%
\mbox{
in }L^{2}(0,T;H^{-1}(\Omega )^{N})\mbox{-weak}  \label{6.5}
\end{equation}%
and 
\begin{equation}
p_{\varepsilon ,\lambda }\rightarrow p_{\varepsilon }\mbox{ in }%
L^{2}(0,T;L^{2}(\Omega )/\mathbb{R})\mbox{-weak.}  \label{6.6}
\end{equation}%
Let $\phi \in \mathcal{C}_{0}^{\infty }(\mathcal{O})$ where $\mathcal{O}%
=Q\times \mathbb{R}_{v}^{N}$. We multiply the Vlasov equation (\ref{r1'}) by 
$\phi $ and integrate by parts to get 
\begin{equation}
-\int_{\mathcal{O}}f_{\varepsilon ,\lambda }\left[ \frac{\partial \phi }{%
\partial t}+\varepsilon v\cdot \nabla \phi +(\boldsymbol{u}_{\varepsilon
,\lambda }\ast \theta _{\lambda }-v)\cdot \nabla _{v}\phi \right] dxdtdv=0.
\label{6.7}
\end{equation}%
We consider the terms in (\ref{6.7}) respectively. It is easy to see that,
as $\lambda \rightarrow 0$, 
\begin{equation*}
\int_{\mathcal{O}}f_{\varepsilon ,\lambda }\frac{\partial \phi }{\partial t}%
dxdtdv\rightarrow \int_{\mathcal{O}}f_{\varepsilon }\frac{\partial \phi }{%
\partial t}dxdtdv.
\end{equation*}%
For the second and fourth terms, since the functions $(t,x,v)\mapsto v\cdot
\nabla \phi $ and $(t,x,v)\mapsto v\cdot \nabla _{v}\phi $ belong to $%
\mathcal{C}_{0}^{\infty }(\mathcal{O})$, we use them as test functions to
get, as $\lambda \rightarrow 0$, 
\begin{equation*}
\int_{\mathcal{O}}f_{\varepsilon ,\lambda }\left[ \varepsilon v\cdot \nabla
\phi -v\cdot \nabla _{v}\phi \right] dxdtdv\rightarrow \int_{\mathcal{O}%
}f_{\varepsilon }\left[ \varepsilon v\cdot \nabla \phi -v\cdot \nabla
_{v}\phi \right] dxdtdv.
\end{equation*}%
Now, as for the term $\int_{\mathcal{O}}f_{\varepsilon ,\lambda }(%
\boldsymbol{u}_{\varepsilon ,\lambda }\ast \theta _{\lambda })\cdot \nabla
_{v}\phi dxdtdv$, we claim that 
\begin{equation}
\int_{\mathcal{O}}f_{\varepsilon ,\lambda }(\boldsymbol{u}_{\varepsilon
,\lambda }\ast \theta _{\lambda })\cdot \nabla _{v}\phi dxdtdv\rightarrow
\int_{\mathcal{O}}f_{\varepsilon }\boldsymbol{u}_{\varepsilon }\cdot \nabla
_{v}\phi dxdtdv.  \label{6.8}
\end{equation}%
Indeed it is sufficient to prove that, under the convergence result (\ref%
{6.4}) and for any $\psi \in \mathcal{C}_{0}^{\infty }(\mathcal{O})^{N}$, 
\begin{equation}
\int_{\mathcal{O}}f_{\varepsilon ,\lambda }\boldsymbol{u}_{\varepsilon
,\lambda }\cdot \psi dxdtdv\rightarrow \int_{\mathcal{O}}f_{\varepsilon }%
\boldsymbol{u}_{\varepsilon }\cdot \psi dxdtdv  \label{6.9}
\end{equation}%
and to apply it with $\psi =\nabla _{v}\phi $. For the proof of (\ref{6.9}),
we refer to the proof of a more involved result, Lemma \ref{l4.1} in Section
4.

Returning to (\ref{6.8}), we have 
\begin{eqnarray*}
\int_{\mathcal{O}}f_{\varepsilon ,\lambda }(\boldsymbol{u}_{\varepsilon
,\lambda }\ast \theta _{\lambda })\cdot \nabla _{v}\phi dxdtdv &=&\int_{%
\mathcal{O}}f_{\varepsilon ,\lambda }\left[ (\boldsymbol{u}_{\varepsilon
,\lambda }-\boldsymbol{u}_{\varepsilon })\ast \theta _{\lambda }\right]
\cdot \nabla _{v}\phi dxdtdv \\
&&+\int_{\mathcal{O}}f_{\varepsilon ,\lambda }(\boldsymbol{u}_{\varepsilon
}\ast \theta _{\lambda })\cdot \nabla _{v}\phi dxdtdv \\
&=&(I)+(II).
\end{eqnarray*}%
For the term $(I)$, we have the estimate 
\begin{eqnarray*}
\left\vert (I)\right\vert &\leq &\left\Vert f_{\varepsilon ,\lambda
}\right\Vert _{L^{\infty }(\mathcal{O})}\left\Vert \nabla _{v}\phi
\right\Vert _{\infty }\left\Vert \boldsymbol{u}_{\varepsilon ,\lambda }-%
\boldsymbol{u}_{\varepsilon }\right\Vert _{L^{2}(Q)}\left\Vert \theta
_{\lambda }\right\Vert _{L^{1}(Q)} \\
&\leq &C\left\Vert \boldsymbol{u}_{\varepsilon ,\lambda }-\boldsymbol{u}%
_{\varepsilon }\right\Vert _{L^{2}(Q)},
\end{eqnarray*}%
in which $C$ is a positive constant independent of $\varepsilon $ and $%
\lambda $. Thus, it follows from (\ref{6.4}) that $(I)\rightarrow 0$.
Regarding $(II)$, we have that $\boldsymbol{u}_{\varepsilon }\ast \theta
_{\lambda }\rightarrow \boldsymbol{u}_{\varepsilon }$ in $L^{2}(Q)$-strong
(use once again (\ref{6.4})), so that by (\ref{6.9}) we arrive at $%
(II)\rightarrow \int_{\mathcal{O}}f_{\varepsilon }\boldsymbol{u}%
_{\varepsilon }\cdot \nabla _{v}\phi \;dxdtdv$. (\ref{6.8}) follows thereby.

Taking into account all the above convergence results and passing to the
limit in (\ref{6.7}) as $\lambda \rightarrow 0$, we obtain 
\begin{equation*}
-\int_{\mathcal{O}}f_{\varepsilon }\left[ \frac{\partial \phi }{\partial t}%
+\varepsilon v\cdot \nabla \phi +(\boldsymbol{u}_{\varepsilon }-v)\cdot
\nabla _{v}\phi \right] dxdtdv=0,
\end{equation*}%
which amounts to 
\begin{equation*}
\frac{\partial f_{\varepsilon }}{\partial t}+\varepsilon v\cdot \nabla
f_{\varepsilon }+\Div_{v}\left( (\boldsymbol{u}_{\varepsilon
}-v)f_{\varepsilon }\right) =0\mbox{ in }\mathcal{D}^{\prime }(\mathcal{O}).
\end{equation*}%
Proceeding as in \cite[Section 4]{SM} we recover the reflection boundary
condition 
\begin{equation*}
f_{\varepsilon }(t,x,v)=f_{\varepsilon }(t,x,v^{\ast })\mbox{ for }x\in
\partial \Omega \mbox{ with }v\cdot \nu (x)<0\mbox{.}
\end{equation*}%
Next using the inequality $1-\gamma _{\lambda }(v)\leq 1_{\{\left\vert
v\right\vert \geq 1/2\lambda \}}$, we get 
\begin{eqnarray}
\left\vert \int_{\Omega \times \mathbb{R}^{N}}(1-\gamma _{\lambda
}(v))(f^{0}\ast \Theta _{\lambda })dxdv\right\vert &\leq &\int_{\Omega
\times \mathbb{R}^{N}}1_{\{\left\vert v\right\vert \geq 1/2\lambda
\}}(f^{0}\ast \Theta _{\lambda })dxdv  \label{6.10} \\
&\leq &4\lambda ^{2}\int_{\Omega \times \mathbb{R}^{N}}\left\vert
v\right\vert ^{2}(f^{0}\ast \Theta _{\lambda })dxdv  \notag \\
&\leq &C\lambda ^{2}\left\Vert f^{0}\right\Vert _{L^{1}(\Omega \times 
\mathbb{R}^{N})};\text{ see (\ref{r10})}.  \notag
\end{eqnarray}%
So, the functions $f^{0}\ast \Theta _{\lambda }$ and $\gamma _{\lambda
}(v)(f^{0}\ast \Theta _{\lambda })$ have the same $L^{1}$-limit $f^{0}$ as $%
\lambda \rightarrow 0$. Hence, letting $\lambda \rightarrow 0$, we arrive at 
\begin{equation*}
f_{\varepsilon }(0,x,v)=f^{0}(x,v)\mbox{ for }(x,v)\in \Omega \times \mathbb{%
R}^{N}.
\end{equation*}

Let us now deal with the Stokes system (\ref{r2'}). We choose $\boldsymbol{%
\Phi }\in \mathcal{C}_{0}^{\infty }(Q)^{N}$ and multiply (\ref{r2'}) by $%
\psi $ and integrate over $Q$; 
\begin{equation}
\begin{array}{l}
-\int_{Q}\boldsymbol{u}_{\varepsilon ,\lambda }\cdot \frac{\partial 
\boldsymbol{\Phi }}{\partial t}dxdt+\int_{Q}A_{0}^{\varepsilon }\nabla 
\boldsymbol{u}_{\varepsilon ,\lambda }\cdot \nabla \boldsymbol{\Phi }%
dxdt+\int_{Q}(A_{1}^{\varepsilon }\ast \nabla \boldsymbol{u}_{\varepsilon
,\lambda })\cdot \nabla \boldsymbol{\Phi }dxdt \\ 
\ \ \ \ \ \ -\int_{Q}p_{\varepsilon ,\lambda }\Div\boldsymbol{\Phi }%
dxdt=-\int_{\mathcal{O}}\gamma _{\lambda }(v)f_{\varepsilon ,\lambda }(%
\boldsymbol{u}_{\varepsilon ,\lambda }-v)\cdot \boldsymbol{\Phi }dxdtdv.%
\end{array}
\label{6.11}
\end{equation}%
In Eq.(\ref{6.11}), only the right-hand side is more involved. However,
proceeding as in (\ref{6.10}), one can check that 
\begin{equation*}
\int_{\mathcal{O}}\gamma _{\lambda }(v)f_{\varepsilon ,\lambda }(\boldsymbol{%
u}_{\varepsilon ,\lambda }-v)\cdot \boldsymbol{\Phi }dxdtdv
\end{equation*}%
and 
\begin{equation*}
\int_{\mathcal{O}}f_{\varepsilon ,\lambda }(\boldsymbol{u}_{\varepsilon
,\lambda }-v)\cdot \boldsymbol{\Phi }dxdtdv
\end{equation*}%
have the same limit, which is, using (\ref{6.9}) and the convergence results
(\ref{6.3})-(\ref{6.6}), nothing else but 
\begin{equation*}
\int_{\mathcal{O}}f_{\varepsilon }(\boldsymbol{u}_{\varepsilon }-v)\cdot 
\boldsymbol{\Phi }dxdtdv.
\end{equation*}%
Thus, passing to the limit in (\ref{6.11}), we realize that $\boldsymbol{u}%
_{\varepsilon }$ solves the equation 
\begin{equation*}
\frac{\partial \boldsymbol{u}_{\varepsilon }}{\partial t}-\Div\left(
A_{0}^{\varepsilon }\nabla \boldsymbol{u}_{\varepsilon
}+\int_{0}^{t}A_{1}^{\varepsilon }(t-\tau ,x)\nabla \boldsymbol{u}%
_{\varepsilon }(\tau ,x)d\tau \right) +\nabla p_{\varepsilon }=-\int_{%
\mathbb{R}^{N}}(\boldsymbol{u}_{\varepsilon }-v)f_{\varepsilon }dv%
\mbox{
in }Q.
\end{equation*}%
We also obtain the initial condition $\boldsymbol{u}_{\varepsilon }(0,x)=%
\boldsymbol{u}^{0}(x)$, $x\in \Omega $.

We have just shown that $(\boldsymbol{u}_{\varepsilon },f_{\varepsilon
},p_{\varepsilon })$ solves the system (\ref{1.1})-(\ref{1.5}). This
concludes the proof of Theorem \ref{t2.1}

It remains to check that the above triple verifies the same estimates as in
Lemma \ref{l2.1}. As we are going to see below, this is a mere consequence
of the following well known result:

\begin{itemize}
\item If $B$ is a Banach space with norm $\left\Vert \cdot \right\Vert $ and 
$f_{n}\rightarrow f$ in $B$-weak or weak$\ast $, then $\left\Vert
f\right\Vert \leq \lim \inf \left\Vert f_{n}\right\Vert $.
\end{itemize}

We can therefore state the counterpart of Lemma \ref{l2.1}.

\begin{lemma}
\label{l2.1'}Let $(\boldsymbol{u}_{\varepsilon },f_{\varepsilon
},p_{\varepsilon })$ be the solution to \emph{(\ref{1.1})-(\ref{1.5})}
constructed in Subsection \emph{\ref{sol}}. Then, 
\begin{equation}
\int_{\Omega \times \mathbb{R}^{N}}(1+\left\vert v\right\vert
^{2})f_{\varepsilon }dxdv+\int_{\Omega }\left\vert \boldsymbol{u}%
_{\varepsilon }\right\vert ^{2}dx+2\int_{0}^{t}\int_{\Omega \times \mathbb{R}%
^{N}}f_{\varepsilon }\left\vert \boldsymbol{u}_{\varepsilon }-v\right\vert
^{2}dxdvd\tau +\int_{0}^{t}\left\Vert \nabla \boldsymbol{u}_{\varepsilon
}(\tau )\right\Vert _{L^{2}(\Omega )}^{2}d\tau \leq C  \label{2.3''}
\end{equation}%
for any $0\leq t\leq T$\ and $\varepsilon >0$, where $C>0$ is independent of 
$\varepsilon $. Moreover if $f^{0}\in L^{p}(\Omega \times \mathbb{R}^{N})$, 
\emph{(}$1\leq p\leq \infty $\emph{)}, then 
\begin{equation}
\left\Vert f_{\varepsilon }\right\Vert _{L^{\infty }(0,T;L^{p}(\Omega \times 
\mathbb{R}^{N}))}\leq \exp (NT)\left\Vert f^{0}\right\Vert _{L^{p}(\Omega
\times \mathbb{R}^{N})}\mbox{ for any }\varepsilon >0.  \label{2.4''}
\end{equation}%
It also holds that 
\begin{equation}
\left\Vert \frac{\partial \boldsymbol{u}_{\varepsilon }}{\partial t}%
\right\Vert _{L^{2}(0,T;H^{-1}(\Omega )^{N})}\leq C  \label{2.4'''}
\end{equation}%
and 
\begin{equation}
\sup_{\varepsilon >0}\left\Vert p_{\varepsilon }\right\Vert
_{L^{2}(0,T;L^{2}(\Omega ))}\leq C.  \label{2.5''}
\end{equation}
\end{lemma}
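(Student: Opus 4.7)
The plan is to transfer the uniform-in-$\lambda$ bounds of Lemma~\ref{l2.1} to the limit triple $(\boldsymbol{u}_\varepsilon,f_\varepsilon,p_\varepsilon)$ obtained in Subsection~\ref{sol} by combining the weak/weak-$\ast$/strong convergences \eqref{6.3}--\eqref{6.6} with the weak lower semicontinuity of norms and Fatou's lemma. The guiding observation is that since the constants $C$ in \eqref{2.3}--\eqref{2.5} do not depend on $\lambda$, each term on the left-hand side is controlled by $\liminf_{\lambda\to 0}$ of the corresponding quantity for the regularized solutions, and this lower semi-limit recovers the limiting object.

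The norm-type bounds are essentially immediate. For \eqref{2.4''}, the convergence \eqref{6.3} of $f_{\varepsilon,\lambda}$ in $L^\infty(0,T;L^p(\Omega\times\mathbb{R}^N))$-weak-$\ast$ gives $\|f_\varepsilon\|_{L^\infty(0,T;L^p)}\le \liminf_{\lambda\to 0}\|f_{\varepsilon,\lambda}\|_{L^\infty(0,T;L^p)}\le \exp(NT)\|f^0\|_{L^p}$ by \eqref{2.4}. For \eqref{2.4'''} I would combine \eqref{6.5} with weak lower semicontinuity of the $L^2(0,T;H^{-1}(\Omega)^N)$-norm, invoking \eqref{2.4'}; similarly \eqref{2.5''} follows from \eqref{6.6} and \eqref{2.5}.

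For the energy inequality \eqref{2.3''} I would handle the four summands separately. The term $\int_\Omega|\boldsymbol{u}_\varepsilon|^2\,dx$ at a fixed time is controlled using weak-$\ast$ convergence of $\boldsymbol{u}_{\varepsilon,\lambda}$ in $L^\infty(0,T;H)$ (which is implicit in the construction, since (2.3) yields boundedness of $\boldsymbol{u}_{\varepsilon,\lambda}$ in that space), together with lower semicontinuity of $\boldsymbol{v}\mapsto\int|\boldsymbol{v}|^2$. The dissipation $\int_0^t\|\nabla\boldsymbol{u}_\varepsilon(\tau)\|_{L^2}^2\,d\tau$ is handled via \eqref{6.4'} and lower semicontinuity of the $L^2(0,t;V)$-norm. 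For $\int_{\Omega\times\mathbb{R}^N}(1+|v|^2)f_\varepsilon\,dx\,dv$, convergence \eqref{6.3} yields the piece without the $|v|^2$ weight; the moment piece I would obtain by testing the weak-$\ast$ convergence against truncations $(1+|v|^2)\chi_{\{|v|\le R\}}\varphi(t)$ for a nonnegative $\varphi\in\mathcal{C}_0^\infty(0,T)$, then letting $R\to\infty$ and invoking monotone convergence/Fatou on the left to extract the full weighted integral (or equivalently, extract an a.e.-converging subsequence from the characteristic-method representation of $f_{\varepsilon,\lambda}$ and apply Fatou directly).

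The main obstacle is the drag-dissipation term $\int_0^t\int_{\Omega\times\mathbb{R}^N}f_\varepsilon|\boldsymbol{u}_\varepsilon-v|^2\,dx\,dv\,d\tau$, because it couples a weakly converging density $f_{\varepsilon,\lambda}$ with a quadratic expression in $\boldsymbol{u}_{\varepsilon,\lambda}\ast\theta_\lambda$, itself only weakly converging in gradient. My plan is to use that $\boldsymbol{u}_{\varepsilon,\lambda}\ast\theta_\lambda\to\boldsymbol{u}_\varepsilon$ strongly in $L^2(Q)^N$ (from \eqref{6.4} and standard mollifier properties). Expanding the square as
\begin{equation*}
f_{\varepsilon,\lambda}|\boldsymbol{u}_{\varepsilon,\lambda}\ast\theta_\lambda-v|^2 = f_{\varepsilon,\lambda}|\boldsymbol{u}_{\varepsilon,\lambda}\ast\theta_\lambda|^2 - 2 f_{\varepsilon,\lambda}\,v\cdot(\boldsymbol{u}_{\varepsilon,\lambda}\ast\theta_\lambda) + f_{\varepsilon,\lambda}|v|^2,
\end{equation*}
the cross term is dealt with by an $\varepsilon$-version of Lemma~\ref{l4.1} (cited in Subsection~\ref{sol}) applied with the test field $\psi=v\chi_{\{|v|\le R\}}$, while the quadratic-in-$\boldsymbol{u}$ term is handled by combining strong $L^2$-convergence of $\boldsymbol{u}_{\varepsilon,\lambda}\ast\theta_\lambda$ with the weak-$\ast$ convergence of $f_{\varepsilon,\lambda}$ against the continuous bounded function $|\boldsymbol{u}_\varepsilon|^2$ on compact $v$-sets, before taking $R\to\infty$ by Fatou. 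Alternatively, and more robustly, since the integrand is nonnegative one may invoke pointwise a.e. convergence of $f_{\varepsilon,\lambda}$ (extracted from the mollified characteristic representation) together with a.e. convergence of $\boldsymbol{u}_{\varepsilon,\lambda}\ast\theta_\lambda$ and apply Fatou's lemma directly to conclude $\int_0^t\int f_\varepsilon|\boldsymbol{u}_\varepsilon-v|^2\le\liminf_{\lambda\to 0}\int_0^t\int f_{\varepsilon,\lambda}|\boldsymbol{u}_{\varepsilon,\lambda}\ast\theta_\lambda-v|^2$, and then invoke \eqref{2.3}. Combining the four bounds gives \eqref{2.3''} with the same constant $C$, concluding the proof.
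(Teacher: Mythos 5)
Your proposal is correct and takes essentially the same route as the paper: transfer (\ref{2.4})--(\ref{2.5}) by weak/weak-$\ast$ lower semicontinuity, and for (\ref{2.3''}) expand $f_{\varepsilon ,\lambda }\left\vert \boldsymbol{u}_{\varepsilon ,\lambda }\ast \theta _{\lambda }-v\right\vert ^{2}$ into the quadratic, cross and moment pieces, truncate in $v$, pair the strongly $L^{2}$-convergent mollified velocity with the weakly-$\ast$ convergent density, and conclude with Fatou. Two small caveats: the sign-indefinite cross term requires actual convergence, so its $\{|v|>R\}$ tail must be killed uniformly in $\lambda $ using the second-moment bound in (\ref{2.3}) together with the $L^{2}(0,T;L^{6})$ bound on the velocity (Fatou alone does not apply there), and your ``more robust'' alternative via a.e.\ convergence of $f_{\varepsilon ,\lambda }$ extracted from the characteristic representation is not justified by the available convergences (only weak-$\ast$ convergence of $f_{\varepsilon ,\lambda }$ is established), so the primary route is the one to keep.
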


\begin{proof}
We follow arguments similar to those in \cite{Boudin}. First and foremost,
we have by (\ref{2.3}) that 
\begin{equation}
\int_{\Omega \times \mathbb{R}^{N}}(1+\left\vert v\right\vert
^{2})f_{\varepsilon ,\lambda }dxdv+\int_{\Omega }\left\vert \boldsymbol{u}%
_{\varepsilon ,\lambda }\right\vert ^{2}dx+2\int_{0}^{t}\int_{\Omega \times 
\mathbb{R}^{N}}f_{\varepsilon ,\lambda }\left\vert \boldsymbol{u}%
_{\varepsilon ,\lambda }\ast \theta _{\lambda }-v\right\vert ^{2}dxdvd\tau
+\int_{0}^{t}\left\Vert \nabla \boldsymbol{u}_{\varepsilon ,\lambda }(\tau
)\right\Vert _{L^{2}(\Omega )}^{2}d\tau \leq C.  \label{e1}
\end{equation}%
The only term to deal with is actually $\int_{0}^{t}\int_{\Omega \times 
\mathbb{R}^{N}}f_{\varepsilon ,\lambda }\left\vert \boldsymbol{u}%
_{\varepsilon ,\lambda }\ast \theta _{\lambda }-v\right\vert ^{2}dxdvd\tau $
which we write as 
\begin{eqnarray*}
\int_{0}^{t}\int_{\Omega \times \mathbb{R}^{N}}f_{\varepsilon ,\lambda
}\left\vert \boldsymbol{u}_{\varepsilon ,\lambda }\ast \theta _{\lambda
}-v\right\vert ^{2}dxdvd\tau &=&\int_{0}^{t}\int_{\Omega \times \mathbb{R}%
^{N}}f_{\varepsilon ,\lambda }\left\vert \boldsymbol{u}_{\varepsilon
,\lambda }\ast \theta _{\lambda }\right\vert ^{2}dxdvd\tau \\
&&-2\int_{0}^{t}\int_{\Omega \times \mathbb{R}^{N}}f_{\varepsilon ,\lambda }(%
\boldsymbol{u}_{\varepsilon ,\lambda }\ast \theta _{\lambda })\cdot
vdxdvd\tau \\
&&+\int_{0}^{t}\int_{\Omega \times \mathbb{R}^{N}}f_{\varepsilon ,\lambda
}\left\vert v\right\vert ^{2}dxdvd\tau \\
&=&(I)-2(II)+(III).
\end{eqnarray*}%
Concerning $(III)$, we know that $f_{\varepsilon ,\lambda }\rightarrow
f_{\varepsilon }$ in $L^{\infty }(0,T;L^{\infty }(\Omega \times \mathbb{R}%
^{N}))$-weak$\ast $. Let $0<\eta <1$. Then because of (\ref{2.3}) we have 
\begin{equation*}
\int_{\Omega \times \mathbb{R}^{N}}\left\vert v\right\vert
^{2}f_{\varepsilon ,\lambda }\gamma _{\eta }(v)dxdv\leq \int_{\Omega \times 
\mathbb{R}^{N}}\left\vert v\right\vert ^{2}f_{\varepsilon ,\lambda }dxdv\leq
C.
\end{equation*}%
Hence there exists a function $g_{\eta }\in L^{\infty }([0,t])$ such that,
up to a subsequence of $\lambda \rightarrow 0$, setting $M_{2}(f_{%
\varepsilon ,\lambda }\gamma _{\eta })(\tau )=\int_{\Omega \times \mathbb{R}%
^{N}}\left\vert v\right\vert ^{2}f_{\varepsilon ,\lambda }\gamma _{\eta
}(v)dxdv$, 
\begin{equation*}
M_{2}(f_{\varepsilon ,\lambda }\gamma _{\eta })\rightarrow g_{\eta }\text{
in }L^{\infty }([0,t])\text{-weak}\ast ;
\end{equation*}%
thus 
\begin{equation*}
\left\Vert g_{\eta }\right\Vert _{L^{\infty }([0,t])}\leq ~\underset{\lambda
\rightarrow 0}{\lim \inf }\int_{0}^{t}M_{2}(f_{\varepsilon ,\lambda }\gamma
_{\eta })(\tau )d\tau \leq ~\underset{\lambda \rightarrow 0}{\lim \inf }%
\int_{0}^{t}\int_{\Omega \times \mathbb{R}^{N}}\left\vert v\right\vert
^{2}f_{\varepsilon ,\lambda }dxdvd\tau .
\end{equation*}%
On the other hand, the weak$\ast $ convergence $f_{\varepsilon ,\lambda
}\rightarrow f_{\varepsilon }$ in $L^{\infty }(0,T;L^{\infty }(\Omega \times 
\mathbb{R}^{N}))$ implies 
\begin{equation*}
M_{2}(f_{\varepsilon ,\lambda }\gamma _{\eta })\rightarrow
M_{2}(f_{\varepsilon ,\lambda }\gamma _{\eta })=\int_{\Omega \times \mathbb{R%
}^{N}}\left\vert v\right\vert ^{2}f_{\varepsilon }\gamma _{\eta }(v)dxdv%
\text{ in }L^{\infty }([0,t])\text{-weak}\ast
\end{equation*}%
since the product of a function $\chi \in L^{1}([0,t])$ by $\left\vert
v\right\vert ^{2}\gamma _{\eta }$ lies in $L^{1}((0,t)\times \Omega \times 
\mathbb{R}^{N})$. The uniqueness of the weak$\ast $-limit yields $%
M_{2}(f_{\varepsilon ,\lambda }\gamma _{\eta })(\tau )=g_{\eta }(\tau )$
a.e. $\tau \in (0,t)$. It therefore follows from the Fatou's lemma and from
the fact that $\left\vert v\right\vert ^{2}f_{\varepsilon }\gamma _{\eta
}(v)\rightarrow \left\vert v\right\vert ^{2}f_{\varepsilon }$ as $\eta
\rightarrow 0$, that 
\begin{equation*}
\int_{\Omega \times \mathbb{R}^{N}}\left\vert v\right\vert
^{2}f_{\varepsilon }dxdv\leq ~\underset{\eta \rightarrow 0}{\lim \inf }%
M_{2}(f_{\varepsilon ,\lambda }\gamma _{\eta })(\tau )=~\underset{\eta
\rightarrow 0}{\lim \inf }g_{\eta }(\tau )\leq ~\underset{\lambda
\rightarrow 0}{\lim \inf }\left\Vert M_{2}(f_{\varepsilon ,\lambda
})\right\Vert _{L^{\infty }(0,t)},
\end{equation*}%
i.e. 
\begin{equation*}
\int_{\Omega \times \mathbb{R}^{N}}\left\vert v\right\vert
^{2}f_{\varepsilon }dxdv\leq ~\underset{\lambda \rightarrow 0}{\lim \inf }%
\int_{\Omega \times \mathbb{R}^{N}}\left\vert v\right\vert
^{2}f_{\varepsilon ,\lambda }dxdv,
\end{equation*}%
whence 
\begin{equation*}
\int_{0}^{t}\int_{\Omega \times \mathbb{R}^{N}}\left\vert v\right\vert
^{2}f_{\varepsilon }dxdvd\tau \leq ~\underset{\lambda \rightarrow 0}{\lim
\inf }\int_{0}^{t}\int_{\Omega \times \mathbb{R}^{N}}\left\vert v\right\vert
^{2}f_{\varepsilon ,\lambda }dxdvd\tau .
\end{equation*}%
As for the first term $(I)$, one has 
\begin{eqnarray*}
\int_{0}^{t}\int_{\Omega \times \mathbb{R}^{N}}f_{\varepsilon ,\lambda
}\left\vert \boldsymbol{u}_{\varepsilon ,\lambda }\ast \theta _{\lambda
}\right\vert ^{2}dxdvd\tau &=&\int_{0}^{t}\int_{\Omega \times \mathbb{R}%
^{N}}\left\vert \boldsymbol{u}_{\varepsilon ,\lambda }\ast \theta _{\lambda
}\right\vert ^{2}f_{\varepsilon ,\lambda }(1-\gamma _{\eta }(v))dxdvd\tau \\
&&+\int_{0}^{t}\int_{\Omega \times \mathbb{R}^{N}}\left\vert \boldsymbol{u}%
_{\varepsilon ,\lambda }\ast \theta _{\lambda }\right\vert
^{2}f_{\varepsilon ,\lambda }\gamma _{\eta }(v)dxdvd\tau \\
&\geq &\int_{0}^{t}\int_{\Omega \times \mathbb{R}^{N}}\left\vert \boldsymbol{%
u}_{\varepsilon ,\lambda }\ast \theta _{\lambda }\right\vert
^{2}f_{\varepsilon ,\lambda }\gamma _{\eta }(v)dxdvd\tau .
\end{eqnarray*}%
For any fixed $\eta $, 
\begin{equation*}
\int_{0}^{t}\int_{\Omega \times \mathbb{R}^{N}}\left\vert \boldsymbol{u}%
_{\varepsilon ,\lambda }\ast \theta _{\lambda }\right\vert
^{2}f_{\varepsilon ,\lambda }\gamma _{\eta }(v)dxdvd\tau \rightarrow
\int_{0}^{t}\int_{\Omega \times \mathbb{R}^{N}}\left\vert \boldsymbol{u}%
_{\varepsilon }\right\vert ^{2}f_{\varepsilon }\gamma _{\eta }(v)dxdvd\tau
\end{equation*}%
when $\lambda \rightarrow 0$. Indeed, it is easy to see that $\left\vert 
\boldsymbol{u}_{\varepsilon ,\lambda }\ast \theta _{\lambda }\right\vert
^{2}\gamma _{\eta }\rightarrow \left\vert \boldsymbol{u}_{\varepsilon
}\right\vert ^{2}f_{\varepsilon }\gamma _{\eta }$ in $L^{1}((0,t)\times
\Omega \times \mathbb{R}^{N})$-strong as $\lambda \rightarrow 0$, so that
combining this with (\ref{6.3}) (for $p=\infty $) we get our result. Thus,
using once again Fatou's lemma, 
\begin{eqnarray*}
\int_{0}^{t}\int_{\Omega \times \mathbb{R}^{N}}\left\vert \boldsymbol{u}%
_{\varepsilon }\right\vert ^{2}f_{\varepsilon }dxdvd\tau &\leq &~\underset{%
\eta \rightarrow 0}{\lim \inf }\int_{0}^{t}\int_{\Omega \times \mathbb{R}%
^{N}}\left\vert \boldsymbol{u}_{\varepsilon }\right\vert ^{2}f_{\varepsilon
}\gamma _{\eta }(v)dxdvd\tau \\
&=&~\underset{\eta \rightarrow 0}{\lim \inf }\underset{\lambda \rightarrow 0}%
{\lim \inf }\int_{0}^{t}\int_{\Omega \times \mathbb{R}^{N}}\left\vert 
\boldsymbol{u}_{\varepsilon ,\lambda }\ast \theta _{\lambda }\right\vert
^{2}f_{\varepsilon ,\lambda }\gamma _{\eta }(v)dxdvd\tau \\
&\leq &~\underset{\lambda \rightarrow 0}{\lim \inf }\int_{0}^{t}\int_{\Omega
\times \mathbb{R}^{N}}\left\vert \boldsymbol{u}_{\varepsilon ,\lambda }\ast
\theta _{\lambda }\right\vert ^{2}f_{\varepsilon ,\lambda }dxdvd\tau .
\end{eqnarray*}%
Finally, for $(II)$, we have 
\begin{eqnarray*}
\int_{0}^{t}\int_{\Omega \times \mathbb{R}^{N}}f_{\varepsilon ,\lambda }(%
\boldsymbol{u}_{\varepsilon ,\lambda }\ast \theta _{\lambda })\cdot
vdxdvd\tau &=&\int_{0}^{t}\int_{\Omega \times \mathbb{R}^{N}}f_{\varepsilon
,\lambda }(\boldsymbol{u}_{\varepsilon ,\lambda }\ast \theta _{\lambda }-%
\boldsymbol{u}_{\varepsilon })\cdot vdxdvd\tau \\
&&+\int_{0}^{t}\int_{\Omega \times \mathbb{R}^{N}}f_{\varepsilon ,\lambda }%
\boldsymbol{u}_{\varepsilon }\cdot vdxdvd\tau \\
&=&(A)+(B).
\end{eqnarray*}%
Dealing with $(A)$, we have, by setting $\boldsymbol{v}_{\varepsilon
,\lambda }=\boldsymbol{u}_{\varepsilon ,\lambda }\ast \theta _{\lambda }-%
\boldsymbol{u}_{\varepsilon }$, 
\begin{equation*}
(A)=\int_{0}^{t}\int_{\Omega \times \mathbb{R}^{N}}f_{\varepsilon ,\lambda
}(1-\gamma _{\eta }(v))\boldsymbol{v}_{\varepsilon ,\lambda }\cdot
vdxdvd\tau +\int_{0}^{t}\int_{\Omega \times \mathbb{R}^{N}}f_{\varepsilon
,\lambda }\gamma _{\eta }(v)\boldsymbol{v}_{\varepsilon ,\lambda }\cdot
vdxdvd\tau ,
\end{equation*}%
and using the inequality $1-\gamma _{\eta }(v)\leq 1_{\{\left\vert
v\right\vert \geq 1/2\eta \}}$,%
\begin{eqnarray*}
\left\vert \int_{0}^{t}\int_{\Omega \times \mathbb{R}^{N}}f_{\varepsilon
,\lambda }(1-\gamma _{\eta }(v))\boldsymbol{v}_{\varepsilon ,\lambda }\cdot
vdxdvd\tau \right\vert &\leq &\int_{0}^{t}\int_{\Omega \times \mathbb{R}%
^{N}}f_{\varepsilon ,\lambda }1_{\{\left\vert v\right\vert \geq 1/2\eta
\}}\left\vert \boldsymbol{v}_{\varepsilon ,\lambda }\right\vert \left\vert
v\right\vert dxdvd\tau \\
&\leq &\int_{0}^{t}\int_{\Omega }\left[ \int_{\mathbb{R}^{N}}\left\vert
v\right\vert f_{\varepsilon ,\lambda }1_{\{\left\vert v\right\vert \geq
1/2\eta \}}dv\right] \left\vert \boldsymbol{v}_{\varepsilon ,\lambda
}\right\vert dxd\tau \\
&\leq &\int_{0}^{t}\left\{ \left( \int_{\Omega }\left( \int_{\mathbb{R}%
^{N}}\left\vert v\right\vert f_{\varepsilon ,\lambda }1_{\{\left\vert
v\right\vert \geq 1/2\eta \}}dv\right) ^{\frac{6}{5}}dx\right) ^{\frac{5}{6}%
}\left( \int_{\Omega }\left\vert \boldsymbol{v}_{\varepsilon ,\lambda
}\right\vert ^{6}dx\right) ^{\frac{1}{6}}\right\} d\tau .
\end{eqnarray*}%
But 
\begin{eqnarray*}
\int_{\Omega }\left( \int_{\mathbb{R}^{N}}\left\vert v\right\vert
f_{\varepsilon ,\lambda }1_{\{\left\vert v\right\vert \geq 1/2\eta
\}}dv\right) ^{\frac{6}{5}}dx &\leq &C(\Omega )\left( \int_{\Omega }\int_{%
\mathbb{R}^{N}}\left\vert v\right\vert f_{\varepsilon ,\lambda
}1_{\{\left\vert v\right\vert \geq 1/2\eta \}}dvdx\right) ^{\frac{6}{5}} \\
&\leq &C(\Omega )\lambda ^{\frac{6}{5}}\left( \int_{\Omega \times \mathbb{R}%
^{N}}\left\vert v\right\vert ^{2}f_{\varepsilon ,\lambda }1_{\{\left\vert
v\right\vert \geq 1/2\eta \}}dvdx\right) ^{\frac{6}{5}} \\
&\leq &C\lambda ^{\frac{6}{5}}\text{ because of (\ref{2.3}).}
\end{eqnarray*}%
Recalling that $\boldsymbol{v}_{\varepsilon ,\lambda }\in
L^{2}(0,T;H_{0}^{1}(\Omega )^{N})\hookrightarrow L^{1}(0,T;L^{6}(\Omega
)^{N})$, it follows from (\ref{2.3}) that 
\begin{equation*}
\int_{0}^{t}\left( \int_{\Omega }\left\vert \boldsymbol{v}_{\varepsilon
,\lambda }\right\vert ^{6}dx\right) ^{\frac{1}{6}}d\tau \leq C,
\end{equation*}%
so that 
\begin{equation*}
\left\vert \int_{0}^{t}\int_{\Omega \times \mathbb{R}^{N}}f_{\varepsilon
,\lambda }(1-\gamma _{\eta }(v))\boldsymbol{v}_{\varepsilon ,\lambda }\cdot
vdxdvd\tau \right\vert \leq C\lambda .
\end{equation*}%
It follows that $\int_{0}^{t}\int_{\Omega \times \mathbb{R}%
^{N}}f_{\varepsilon ,\lambda }(1-\gamma _{\eta }(v))\boldsymbol{v}%
_{\varepsilon ,\lambda }\cdot vdxdvd\tau \rightarrow 0$ as $\lambda
\rightarrow 0$.

We claim that 
\begin{equation*}
\int_{0}^{t}\int_{\Omega \times \mathbb{R}^{N}}f_{\varepsilon ,\lambda
}\gamma _{\eta }(v)\boldsymbol{v}_{\varepsilon ,\lambda }\cdot vdxdvd\tau
\rightarrow 0\text{ as }\lambda \rightarrow 0.
\end{equation*}%
Indeed 
\begin{eqnarray*}
\left\vert \int_{0}^{t}\int_{\Omega \times \mathbb{R}^{N}}f_{\varepsilon
,\lambda }\gamma _{\eta }(v)\boldsymbol{v}_{\varepsilon ,\lambda }\cdot
vdxdvd\tau \right\vert &\leq &\left\Vert f_{\varepsilon ,\lambda
}\right\Vert _{L^{\infty }(\mathcal{O})}\int_{0}^{t}\int_{\Omega
}\int_{B(0,2)}\left\vert v\right\vert \left\vert \boldsymbol{v}_{\varepsilon
,\lambda }\right\vert dvdxd\tau \\
&\leq &C\int_{0}^{t}\int_{\Omega }\int_{B(0,2)}\left\vert v\right\vert
\left\vert \boldsymbol{v}_{\varepsilon ,\lambda }\right\vert dvdxd\tau
\end{eqnarray*}%
and 
\begin{equation*}
\int_{0}^{t}\int_{\Omega }\int_{B(0,2)}\left\vert v\right\vert \left\vert 
\boldsymbol{v}_{\varepsilon ,\lambda }\right\vert dvdxd\tau \leq 2\left\vert
B(0,2)\right\vert \left\vert \Omega \right\vert ^{\frac{1}{2}}\left\Vert 
\boldsymbol{v}_{\varepsilon ,\lambda }\right\Vert _{L^{2}(Q)}\rightarrow 0%
\text{ as }\lambda \rightarrow 0
\end{equation*}%
since $\boldsymbol{v}_{\varepsilon ,\lambda }\rightarrow 0$ in $L^{2}(Q)$ as 
$\lambda \rightarrow 0$. It follows that $(A)\rightarrow 0$ as $\lambda
\rightarrow 0$. We use the same kind of arguments to show that $%
(B)\rightarrow \int_{0}^{t}\int_{\Omega \times \mathbb{R}^{N}}f_{\varepsilon
}\boldsymbol{u}_{\varepsilon }\cdot vdxdvd\tau $, that is, 
\begin{equation*}
(II)\rightarrow \int_{0}^{t}\int_{\Omega \times \mathbb{R}%
^{N}}f_{\varepsilon }\boldsymbol{u}_{\varepsilon }\cdot vdxdvd\tau .
\end{equation*}%
Coming back to (\ref{e1}) and taking there the $\lim \inf $ as $\lambda
\rightarrow 0$, we get at once (\ref{2.3''}). The lemma follows thereby.
\end{proof}

\begin{remark}
\bigskip \label{r2.3}\emph{We observe that the sequence }$\left( \int_{%
\mathbb{R}^{N}}f_{\varepsilon }(\boldsymbol{u}_{\varepsilon }-v)dv\right)
_{\varepsilon >0}$\emph{\ is bounded in }$L^{1}(Q)^{N}$\emph{. Indeed} 
\begin{eqnarray*}
\int_{Q}\left\vert \int_{\mathbb{R}^{N}}f_{\varepsilon }(\boldsymbol{u}%
_{\varepsilon }-v)dv\right\vert dxdt &\leq &\int_{Q}\int_{\mathbb{R}%
^{N}}f_{\varepsilon }\left\vert \boldsymbol{u}_{\varepsilon }-v\right\vert
dvdxdt \\
&=&\int_{\mathcal{O}}\sqrt{f_{\varepsilon }}(1+\left\vert v\right\vert )%
\sqrt{f_{\varepsilon }}\frac{\left\vert \boldsymbol{u}_{\varepsilon
}-v\right\vert }{(1+\left\vert v\right\vert )}dvdxdt \\
&\leq &\sqrt{2}\left( \int_{\mathcal{O}}f_{\varepsilon }(1+\left\vert
v\right\vert ^{2})dxdvdt\right) ^{\frac{1}{2}}\left( \int_{\mathcal{O}}\frac{%
f_{\varepsilon }\left\vert \boldsymbol{u}_{\varepsilon }-v\right\vert ^{2}}{%
(1+\left\vert v\right\vert )^{2}}dvdxdt\right) ^{\frac{1}{2}} \\
&\leq &C(\mbox{\emph{\ see estimate (\ref{2.3''}) of  Lemma \ref{l2.1'}.}})
\end{eqnarray*}
\end{remark}

%
%

\section{Brief introduction to $\Sigma $-convergence}

This section is far from being a comprehensive introduction to $\Sigma $%
-convergence. It is rather a pretext for fixing notations and recalling
fundamental results pertaining to $\Sigma $-convergence. We shall restrict
ourselves to concepts relevant to our context.

\subsection{Algebras with mean value - An overview}

We refer the reader to \cite{Casado, Hom1, Deterhom, Zhikov4} for an
extensive presentation of the concept of algebras with mean value (algebras
wmv, in short).

Let $A$ be an algebra wmv on $\mathbb{R}^{N}$, that is, a closed subalgebra
of the $\mathcal{C}^{\ast }$-algebra of bounded uniformly continuous
functions on $\mathbb{R}^{N}$, $\mathrm{BUC}(\mathbb{R}^{N})$, which
contains the constants, is translation invariant and is such that any of its
elements possesses a mean value in the following sense: for any $u\in A$,
the sequence $(u^{\varepsilon })_{\varepsilon >0}$ (defined by $%
u^{\varepsilon }(x)=u(x/\varepsilon )$, $x\in \mathbb{R}^{N}$) weakly$\ast $%
-converges in $L^{\infty }(\mathbb{R}^{N})$ to some constant real function $%
M(u)$ (called the mean value of $u$) as $\varepsilon \rightarrow 0$. We
denote by $\Delta (A)$ the spectrum of $A$ and by $\mathcal{G}$ the Gelfand
transformation on $A$. Let $B_{A}^{p}(\mathbb{R}^{N})$ ($1\leq p<\infty $)
denote the Besicovitch space associated to $A$, that is, the closure of $A$
with respect to the Besicovitch seminorm 
\begin{equation*}
\left\Vert u\right\Vert _{p}=\left( \underset{r\rightarrow +\infty }{\lim
\sup }\frac{1}{\left\vert B_{r}\right\vert }\int_{B_{r}}\left\vert
u(y)\right\vert ^{p}dy\right) ^{1/p}
\end{equation*}%
where $B_{r}$ is the open ball of $\mathbb{R}^{N}$ centered at the origin
and of radius $r>0$. We set 
\begin{equation*}
B_{A}^{\infty }(\mathbb{R}^{N})=\{f\in \cap _{1\leq p<\infty }B_{A}^{p}(%
\mathbb{R}^{N}):\sup_{1\leq p<\infty }\left\Vert f\right\Vert _{p}<\infty
\}\;\;\;\;\;\;\;\;\;
\end{equation*}%
and we endow it with the seminorm $\left[ f\right] _{\infty }=\sup_{1\leq
p<\infty }\left\Vert f\right\Vert _{p}$. So topologized, the spaces $%
B_{A}^{p}(\mathbb{R}^{N})$ ($1\leq p\leq \infty $) are complete seminormed
vector spaces which are not in general Fr\'{e}chet spaces since they are not
separated in general. We denote by $\mathcal{B}_{A}^{p}(\mathbb{R}^{N})$ the
completion of $B_{A}^{p}(\mathbb{R}^{N})$ with respect to $\left\Vert \cdot
\right\Vert _{p}$ for $1\leq p<\infty $, and with respect to $\left[ \cdot %
\right] _{\infty }$ for $p=\infty $. The following hold true \cite{CMP, NA}:

\begin{itemize}
\item[(\textbf{1)}] The Gelfand transformation $\mathcal{G}:A\rightarrow 
\mathcal{C}(\Delta (A))$ extends by continuity to a unique continuous linear
mapping (still denoted by $\mathcal{G}$) of $B_{A}^{p}(\mathbb{R}^{N})$ into 
$L^{p}(\Delta (A))$, which in turn induces an isometric isomorphism $%
\mathcal{G}_{1}$ of $B_{A}^{p}(\mathbb{R}^{N})/\mathcal{N}=\mathcal{B}%
_{A}^{p}(\mathbb{R}^{N})$ onto $L^{p}(\Delta (A))$ (where $\mathcal{N}%
=\{u\in B_{A}^{p}(\mathbb{R}^{N}):\mathcal{G}(u)=0\}$). Moreover if $u\in
B_{A}^{p}(\mathbb{R}^{N})\cap L^{\infty }(\mathbb{R}^{N})$ then $\mathcal{G}%
(u)\in L^{\infty }(\Delta (A))$ and $\left\Vert \mathcal{G}(u)\right\Vert
_{L^{\infty }(\Delta (A))}\leq \left\Vert u\right\Vert _{L^{\infty }(\mathbb{%
R}^{N})}$.

\item[(\textbf{2)}] The mean value $M$ defined on $A$, extends by continuity
to a positive continuous linear form (still denoted by $M$) on $B_{A}^{p}(%
\mathbb{R}^{N})$ satisfying $M(u)=\int_{\Delta (A)}\mathcal{G}(u)d\beta $ ($%
u\in B_{A}^{p}(\mathbb{R}^{N})$). Furthermore, $M(\tau _{a}u)=M(u)$ for each 
$u\in B_{A}^{p}(\mathbb{R}^{N})$ and all $a\in \mathbb{R}^{N}$, where $\tau
_{a}u=u(\cdot +a)$. Moreover for $u\in B_{A}^{p}(\mathbb{R}^{N})$ we have $%
\left\Vert u\right\Vert _{p}=\left[ M(\left\vert u\right\vert ^{p})\right]
^{1/p}$, and for $u+\mathcal{N}\in \mathcal{B}_{A}^{p}(\mathbb{R}^{N})$ we
may still define its mean value once again denoted by $M$, as $M(u+\mathcal{N%
})=M(u)$.
\end{itemize}

For $u=v+\mathcal{N}\in \mathcal{B}_{A}^{p}(\mathbb{R}^{N})$ ($1\leq p\leq
\infty $) and $y\in \mathbb{R}^{N}$, we define in a natural way the
translate $\tau _{y}u=v(\cdot +y)+\mathcal{N}$ of $u$, and as it can be seen
in \cite{NA2014, Deterhom}, this is well defined and induces a strongly
continuous $N$-parameter group of isometries $T(y):\mathcal{B}_{A}^{p}(%
\mathbb{R}^{N})\rightarrow \mathcal{B}_{A}^{p}(\mathbb{R}^{N})$ defined by $%
T(y)u=\tau _{y}u$. We denote by $\overline{\partial }/\partial y_{i}$ ($%
1\leq i\leq N$) the infinitesimal generator of $T(y)$ along the $i$th
coordinate direction. We refer the reader to \cite{NA2014, Deterhom} for the
properties of $\overline{\partial }/\partial y_{i}$ as well as for those of
the associated Sobolev-type spaces $\mathcal{B}_{A}^{1,p}(\mathbb{R}^{N})$
and $\mathcal{B}_{\#A}^{1,p}(\mathbb{R}^{N})$.

Now, let $A$ be an algebra wmv on $\mathbb{R}^{N}$. For $\mu \in \Delta (A)$
and $f\in A$, define $T_{\mu }f$ by $T_{\mu }f(y)=\mu (\tau _{y}f)$, $y\in 
\mathbb{R}^{N}$. $T_{\mu }f$ is well defined as an element of $\mathrm{BUC}(%
\mathbb{R}^{N})$ since $A$ is translation invariant. Whence a bounded linear
operator $T_{\mu }:A\rightarrow \mathrm{BUC}(\mathbb{R}^{N})$.

\begin{definition}
\label{d3.1}\emph{The algebra wmv }$A$\emph{\ is said to be }introverted%
\emph{\ if }$T_{\mu }(A)\subset A$\emph{\ for any }$\mu \in \Delta (A)$\emph{%
.}
\end{definition}

Let $A$ be an introverted algebra wmv on $\mathbb{R}^{N}$. Then \cite[%
Theorem 3.2]{NA2014} its spectrum $\Delta (A)$ is a compact topological
semigroup. In order to simplify the notations, the semigroup operation in $%
\Delta (A)$ is additively written. With this in mind, set 
\begin{equation*}
K(A)=\cap _{s\in \Delta (A)}(s+\Delta (A))\mbox{, the \emph{kernel} of }%
\Delta (A).
\end{equation*}%
The following result provides us with the structure of $K(A)$.

\begin{theorem}[{\protect\cite[Theorem 3.4]{NA2014}}]
\label{t3.1}Let $A$ be an introverted algebra wmv on $\mathbb{R}^{N}$. Then

\begin{itemize}
\item[(i)] $K(A)$ is a compact topological group.

\item[(ii)] The mean value $M$ on $A$ can be identified as the Haar integral
over $K(A)$.
\end{itemize}
\end{theorem}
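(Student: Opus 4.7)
My plan is to prove this via the classical structure theory of compact topological semigroups, specialized to the setting arising from an introverted algebra with mean value. For \textbf{(i)}, I would first verify $K(A)\neq\emptyset$: each $s+\Delta(A)$ is closed and nonempty, and satisfies $(s_1+\Delta(A))\cap(s_2+\Delta(A))\supseteq s_1+s_2+\Delta(A)$, so the family has the finite intersection property and compactness of $\Delta(A)$ yields a nonempty intersection. Next I identify $K(A)$ as the minimal two-sided ideal of $\Delta(A)$, using that the canonical image of $\mathbb{R}^N$ in $\Delta(A)$ is dense and abelian, so the semigroup operation is commutative and every right ideal is two-sided. To install a group structure I apply the classical Numakura--Ellis argument: any compact topological semigroup contains an idempotent, and compactness together with commutativity yield a minimal such idempotent $e\in K(A)$; the set $K(A)=e+\Delta(A)$ is a compact monoid with identity $e$, and in fact a group, because for each $s\in K(A)$ the left-translation $t\mapsto s+t$ is a continuous surjection of the compact Hausdorff space $K(A)$ onto itself (by minimality of $K(A)$ as an ideal), hence a homeomorphism whose inverse furnishes $-s$. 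Joint continuity of the group operation in the commutative compact setting then follows from Ellis' theorem.

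For \textbf{(ii)}, the Gelfand isomorphism represents $M$ as a unique Radon probability measure $\beta$ on $\Delta(A)$ with $M(u)=\int_{\Delta(A)}\mathcal{G}(u)\,d\beta$. I plan to establish (a) $\mathrm{supp}(\beta)\subseteq K(A)$ and (b) the restriction of $\beta$ to $K(A)$ is invariant under the group action of $K(A)$ on itself; uniqueness of the normalized Haar measure will then finish the identification. The translation invariance $M(\tau_a u)=M(u)$ of the mean value gives $\int_{\Delta(A)}\mathcal{G}(u)\,d\beta=\int_{\Delta(A)}\mathcal{G}(u)(a+\cdot)\,d\beta$ for $a$ in the dense image of $\mathbb{R}^N$ in $\Delta(A)$. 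To extend this to arbitrary $a\in\Delta(A)$, I consider $\varphi(a)=\int_{\Delta(A)}\mathcal{G}(u)(a+x)\,d\beta(x)$; separate continuity of the semigroup operation together with dominated convergence (everything is uniformly bounded) makes $\varphi$ continuous, and being constant on the dense image of $\mathbb{R}^N$ it is constant on $\Delta(A)$, and in particular on $K(A)$. For the support statement, take a continuous function $f$ on $\Delta(A)$ vanishing on $K(A)$; for any $h\in K(A)$ one has $h+\Delta(A)=K(A)$ by minimality, hence $f(h+x)=0$ for all $x$, and the translation invariance just derived yields $\int f\,d\beta=\int f(h+\cdot)\,d\beta=0$. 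Thus $\beta$ charges no open set disjoint from $K(A)$.

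The \textbf{main obstacle} I anticipate is rigorously handling the extension of translation invariance from $\mathbb{R}^N$ to $\Delta(A)$ in Part (ii), because the semigroup operation on $\Delta(A)$ is in general only separately (not jointly) continuous. This is precisely where the introversion hypothesis enters in an essential way: it ensures that $T_\mu f\in A$ for every $\mu\in\Delta(A)$ and every $f\in A$, which is exactly what allows the dominated-convergence argument to proceed on the correct function class. Once invariance and support of $\beta$ are secured, identification of $\beta$ with the normalized Haar integral on the compact group $K(A)$ is forced by uniqueness of Haar measure.
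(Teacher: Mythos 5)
You should first note that the paper itself contains no proof of this statement: it is quoted, with attribution, from \cite[Theorem 3.4]{NA2014}, so there is no in-paper argument to compare against. Judged on its own, your outline follows the standard route (and, as far as one can tell, the route of the cited source): nonemptiness of $K(A)$ by the finite-intersection property, commutativity of the operation via density of the evaluations at points of $\mathbb{R}^{N}$, identification of $K(A)$ with the minimal ideal, the Numakura--Ellis idempotent argument to produce the identity, and, for (ii), translation invariance and support of the $M$-measure $\beta$ followed by uniqueness of normalized Haar measure on the compact group $K(A)$. The architecture is sound, and your use of the surjectivity of $\mathcal{G}$ onto $\mathcal{C}(\Delta(A))$ in the support argument is the right mechanism.

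Two steps as written do not hold up and need repair. First, in (i) you assert that the translation $t\mapsto s+t$, being a continuous surjection of the compact Hausdorff space $K(A)$ onto itself, is a homeomorphism; this implication is false (the map $z\mapsto z^{2}$ on the circle is a continuous surjection of a compact Hausdorff space onto itself that is not injective). Fortunately you do not need it: from $s+\Delta(A)=K(A)$ (an ideal contained in the minimal ideal) you directly get some $t$ with $s+t=e$, which is all that is required for inverses, and continuity of inversion then follows from Ellis' theorem, or from the classical fact that a compact Hausdorff topological semigroup that is algebraically a group is a topological group. Second, in (ii) the continuity of $\varphi(a)=\int_{\Delta(A)}\widehat{u}(a+x)\,d\beta(x)$ cannot be obtained from ``separate continuity plus dominated convergence'': $\Delta(A)$ is in general not metrizable, so continuity must be tested along nets, where dominated convergence is unavailable; nor can you push the functional $a$ through a vector-valued integral of $x\mapsto T_{x}u$, since that map need not be norm continuous, and asserting $a\bigl(\int T_{x}u\,d\beta\bigr)=\int a(T_{x}u)\,d\beta$ for arbitrary $a\in\Delta(A)$ is essentially the statement being proved. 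The clean repair is already in the framework you quote: by \cite[Theorem 3.2]{NA2014} the operation on $\Delta(A)$ is jointly continuous, so $(a,x)\mapsto\widehat{u}(a+x)$ is continuous on the compact product, hence $\varphi\in\mathcal{C}(\Delta(A))$, and your density argument, the support argument, and uniqueness of Haar measure on $K(A)$ then complete (ii) as intended.
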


With the help of Theorem \ref{t3.1}, we can define the convolution over $%
\Delta (A)$ in terms of its kernel $K(A)$. Indeed, as proved in \cite{NA2014}%
, we have $r+s\in K(A)$ whenever $r\in \Delta (A)$ and $s\in K(A)$. Thus,
let $p,q,m\geq 1$ be real numbers satisfying $\frac{1}{p}+\frac{1}{q}=1+%
\frac{1}{m}$. For $u\in L^{p}(\Delta (A))$ and $v\in L^{q}(\Delta (A))$ we
define the convolution product $u\widehat{\ast }v$ as follows: 
\begin{equation*}
(u\widehat{\ast }v)(s)=\int_{K(A)}u(r)v(s-r)d\beta (r)\mbox{, \ a.e. }s\in
\Delta (A),
\end{equation*}%
where $-r$ stands for the inverse of $r\in K(A)$ (recall that $K(A)$ is an
Abelian group). Then $\widehat{\ast }$ is well defined since $K(A)$ is an
ideal of $\Delta (A)$, and we have that $\int_{K(A)}u(r)v(s-r)d\beta
(r)=\int_{\Delta (A)}u(r)v(s-r)d\beta (r)$ since $\beta $ is supported by $%
K(A)$. Indeed for $s\in \Delta (A)$ and $r\in K(A)$, $-r$ exists in $K(A)$
and $s-r\in K(\Delta (A))$. It holds that $u\widehat{\ast }v\in L^{m}(\Delta
(A))$ and further: 
\begin{equation*}
\left\Vert u\widehat{\ast }v\right\Vert _{L^{m}(\Delta (A))}\leq \left\Vert
u\right\Vert _{L^{p}(\Delta (A))}\left\Vert v\right\Vert _{L^{q}(\Delta
(A))}.
\end{equation*}%
Now let $u\in L^{p}(\mathbb{R}^{N};L^{p}(\Delta (A)))$ and $v\in L^{q}(%
\mathbb{R}^{N};L^{q}(\Delta (A)))$. We define the double convolution $u\ast
\ast v$ as follows: 
\begin{eqnarray*}
(u\ast \ast v)(x,s) &=&\int_{\mathbb{R}^{N}}\left[ \left( u(t,\cdot )%
\widehat{\ast }v(x-t,\cdot )\right) (s)\right] dt \\
&\equiv &\int_{\mathbb{R}^{N}}\int_{K(A)}u(t,r)v(x-t,s-r)d\beta (r)\,dt%
\mbox{,
a.e. }(x,s)\in \mathbb{R}^{N}\times \Delta (A).
\end{eqnarray*}%
Then $\ast \ast $ is well defined as an element of $L^{m}(\mathbb{R}%
^{N}\times \Delta (A))$ and satisfies 
\begin{equation*}
\left\Vert u\ast \ast v\right\Vert _{L^{m}(\mathbb{R}^{N}\times \Delta
(A))}\leq \left\Vert u\right\Vert _{L^{p}(\mathbb{R}^{N}\times \Delta
(A))}\left\Vert v\right\Vert _{L^{q}(\mathbb{R}^{N}\times \Delta (A))}.
\end{equation*}%
It is to be noted that if $u\in L^{p}(\Omega ;L^{p}(\Delta (A)))$, and $v\in
L^{q}(\mathbb{R}^{N};L^{q}(\Delta (A)))$, we still define $u\ast \ast v$ by
replacing $u$ by its zero extension over $\mathbb{R}^{N}$.

Finally, for $u\in L^{p}(\mathbb{R}^{N};\mathcal{B}_{A}^{p}(\mathbb{R}^{N}))$
and $v\in L^{q}(\mathbb{R}^{N};\mathcal{B}_{A}^{q}(\mathbb{R}^{N}))$ we
define the double convolution still denoted by $\ast \ast $ as follows: $%
u\ast \ast v$ is that element of $L^{m}(\mathbb{R}^{N};\mathcal{B}_{A}^{m}(%
\mathbb{R}^{N}))$ defined by 
\begin{equation*}
\mathcal{G}_{1}(u\ast \ast v)=\widehat{u}\ast \ast \widehat{v}.
\end{equation*}

\subsection{ $\Sigma $-convergence method}

Throughout this section, $\Omega $ is an open subset of $\mathbb{R}^{N}$,
and unless otherwise specified, $A$ is an algebra with mean value on $%
\mathbb{R}^{N}$.

\begin{definition}
\label{d3.2}\emph{(1) A sequence }$\left( u_{\varepsilon }\right)
_{\varepsilon >0}\subset L^{p}\left( \Omega \right) $\emph{\ }$(1\leq
p<\infty )$\emph{\ is said to }weakly $\Sigma $-converge\emph{\ in }$%
L^{p}\left( \Omega \right) $\emph{\ to some }$u_{0}\in L^{p}(\Omega ;%
\mathcal{B}_{A}^{p}(\mathbb{R}^{N}))$\emph{\ if as }$\varepsilon \rightarrow
0$\emph{, }%
\begin{equation}
\int_{\Omega }u_{\varepsilon }\left( x\right) f^{\varepsilon }\left(
x\right) dx\rightarrow \iint_{\Omega \times \Delta (A)}\widehat{u}_{0}\left(
x,s\right) \widehat{f}\left( x,s\right)\; dxd\beta \left( s\right)
\label{2.3'}
\end{equation}%
\emph{for all }$f\in L^{p^{\prime }}\left( \Omega ;A\right) $\emph{\ }$%
\left( 1/p^{\prime }=1-1/p\right) $\emph{\ where }$f^{\varepsilon }\left(
x\right) =f\left( x,x/\varepsilon \right) $\emph{\ and }$\widehat{f}\left(
x,\cdot \right) =\mathcal{G}(f\left( x,\cdot \right) )$\emph{\ }$a.e.$\emph{%
\ in }$x\in \Omega $\emph{. We denote this by }$u_{\varepsilon }\rightarrow
u_{0}$\emph{\ in }$L^{p}(\Omega )$\emph{-weak }$\Sigma $\emph{.}

\noindent \emph{(2) A sequence }$(u_{\varepsilon })_{\varepsilon >0}\subset
L^{p}(\Omega )$\emph{\ }$(1\leq p<\infty )$\emph{\ is said to }strongly $%
\Sigma $-converge\emph{\ in }$L^{p}(\Omega )$\emph{\ to some }$u_{0}\in
L^{p}(\Omega ;\mathcal{B}_{A}^{p}(\mathbb{R}^{N}))$\emph{\ if it is weakly }$%
\Sigma $\emph{-convergent and further satisfies the following condition: }%
\begin{equation*}
\left\Vert u_{\varepsilon }\right\Vert _{L^{p}(\Omega )}\rightarrow
\left\Vert \widehat{u}_{0}\right\Vert _{L^{p}(\Omega \times \Delta (A))}.
\end{equation*}%
\emph{We denote this by }$u_{\varepsilon }\rightarrow u_{0}$\emph{\ in }$%
L^{p}(\Omega )$\emph{-strong }$\Sigma $\emph{.}
\end{definition}

We recall here that $\widehat{u}_{0}=\mathcal{G}_{1}\circ u_{0}$ and $%
\widehat{f}=\mathcal{G}\circ f$, $\mathcal{G}_{1}$ being the isometric
isomorphism sending $\mathcal{B}_{A}^{p}(\mathbb{R}^{N})$ onto $L^{p}(\Delta
(A))$ and $\mathcal{G}$, the Gelfand transformation on $A$.

In the sequel the letter $E$ will throughout denote any ordinary sequence $%
(\varepsilon _{n})_{n}$ (integers $n\geq 0$) with $0<\varepsilon _{n}\leq 1$
and $\varepsilon _{n}\rightarrow 0$ as $n\rightarrow \infty $. The following
two results hold (see e.g. \cite{Casado, CMP, NA} for their justification).

\begin{theorem}
\label{t3.2}\emph{(i)} Any bounded sequence $(u_{\varepsilon })_{\varepsilon
\in E}$ in $L^{p}(\Omega )$ (for $1<p<\infty $) admits a subsequence which
is weakly $\Sigma $-convergent in $L^{p}(\Omega )$.

\noindent \emph{(ii)} Any uniformly integrable sequence $(u_{\varepsilon
})_{\varepsilon \in E}$ in $L^{1}(\Omega )$ admits a subsequence which is
weakly $\Sigma $-convergent in $L^{1}(\Omega )$.
\end{theorem}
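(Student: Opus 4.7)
My plan is to follow the standard Nguetseng--Allaire compactness argument, adapted to the framework of algebras with mean value via the Gelfand transform.

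For part (i), I would first associate to each $u_{\varepsilon}$ the linear functional $L_{\varepsilon}$ on $L^{p'}(\Omega; A)$ (with $1/p + 1/p' = 1$) defined by
\[
L_{\varepsilon}(f) = \int_\Omega u_{\varepsilon}(x)\, f(x, x/\varepsilon)\, dx.
\]
By H\"older's inequality and the pointwise bound $|f^\varepsilon(x)| \leq \|f(x,\cdot)\|_{\infty}$ (which holds since $A$ is equipped with the sup norm of $\mathrm{BUC}(\mathbb{R}^N)$), one gets $|L_{\varepsilon}(f)| \leq \|u_{\varepsilon}\|_{L^p(\Omega)}\|f^{\varepsilon}\|_{L^{p'}(\Omega)} \leq C \|f\|_{L^{p'}(\Omega; A)}$ uniformly in $\varepsilon$. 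Assuming (as is standard) that $A$ is separable, so is $L^{p'}(\Omega; A)$, and Banach--Alaoglu yields a subsequence, still written $L_{\varepsilon_n}$, converging weak-$*$ to some $L$ in the dual of $L^{p'}(\Omega; A)$.

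Next I would identify $L$ as integration against an element $\widehat{u}_0 \in L^p(\Omega \times \Delta(A))$. The crucial ingredient is the mean value property applied to $|f(x,\cdot)|^{p'}$: for a.e.\ $x\in\Omega$, $M(|f(x,\cdot)|^{p'}) = \|\widehat{f}(x,\cdot)\|^{p'}_{L^{p'}(\Delta(A))}$, whence
\[
\lim_{\varepsilon \to 0}\int_\Omega |f^{\varepsilon}(x)|^{p'}\, dx = \int_{\Omega \times \Delta(A)} |\widehat{f}(x,s)|^{p'}\, dx\,d\beta(s).
\]
This yields the refined bound $|L(f)| \leq C \|\widehat{f}\|_{L^{p'}(\Omega \times \Delta(A))}$, so $L(f) = 0$ whenever $\widehat{f} = 0$, and $L$ factors through the Gelfand map $\mathcal{G}_1 : f \mapsto \widehat{f}$ as $L(f) = \widetilde{L}(\widehat{f})$ with $\widetilde{L}$ bounded. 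Since $\mathcal{G}(A)$ is dense in $\mathcal{C}(\Delta(A))$ by Gelfand--Naimark, and $\mathcal{C}(\Delta(A))$ is dense in $L^{p'}(\Delta(A))$, the image of $L^{p'}(\Omega; A)$ is dense in $L^{p'}(\Omega \times \Delta(A))$. Extending $\widetilde{L}$ by continuity and applying Riesz representation on the reflexive space $L^{p'}(\Omega \times \Delta(A))$ produces the required $\widehat{u}_0 \in L^p(\Omega \times \Delta(A))$; setting $u_0 = \mathcal{G}_1^{-1}(\widehat{u}_0) \in L^p(\Omega; \mathcal{B}_A^p(\mathbb{R}^N))$ finishes the argument.

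For part (ii), the same functional $L_{\varepsilon}$ is now estimated on $L^\infty(\Omega; A)$ via $|L_{\varepsilon}(f)| \leq \|u_\varepsilon\|_{L^1}\|f\|_{L^\infty(\Omega; A)}$, producing a weak-$*$ limit $L$ in $(L^\infty(\Omega; A))^*$. The main obstacle is that this dual is strictly larger than $L^1$, so the Riesz step a priori only delivers a finitely additive set function — this is precisely where the uniform integrability hypothesis must be used. My strategy is a truncation and diagonal argument: for fixed $N$, the truncated sequence $u_\varepsilon\,\chi_{\{|u_\varepsilon|\leq N\}}$ is bounded in $L^\infty$, hence in any $L^p$ with $1<p<\infty$, so part (i) supplies a weakly $\Sigma$-convergent subsequence with limit $u_0^N$; a standard diagonal extraction produces a single subsequence valid for all $N$. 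By the Dunford--Pettis theorem, uniform integrability implies that $\{u_\varepsilon\}$ is relatively weakly compact in $L^1(\Omega)$, which allows one to pass $N \to \infty$ and obtain a limit $u_0 \in L^1(\Omega; \mathcal{B}_A^1(\mathbb{R}^N))$ inheriting the weak $\Sigma$-convergence property on bounded continuous test functions, and then by density on all of $L^\infty(\Omega; A)$.
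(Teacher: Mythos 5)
The paper does not actually prove Theorem \ref{t3.2}: it recalls it and points to \cite{Casado, CMP, NA} for the justification. Your argument for part (i) is exactly the standard route taken in those sources (and going back to the original two-scale compactness proof): bound the functionals $L_{\varepsilon}$ on $L^{p'}(\Omega;A)$, extract a weak$\ast$ limit, use the mean-value/trace property $\|f^{\varepsilon}\|_{L^{p'}(\Omega)}\to\|\widehat{f}\|_{L^{p'}(\Omega\times\Delta(A))}$ to see that the limit functional factors through the Gelfand transform, and conclude by density of $\mathcal{G}(L^{p'}(\Omega;A))$ and duality in $L^{p'}(\Omega\times\Delta(A))$. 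Your part (ii) is also workable: truncation at level $N$, part (i) plus a diagonal extraction, uniform integrability to make $\sup_{\varepsilon}\|u_{\varepsilon}-u_{\varepsilon}\chi_{\{|u_{\varepsilon}|\le N\}}\|_{L^{1}}$ small, and lower semicontinuity of the $L^{1}$-norm under weak $\Sigma$-convergence to see that the truncated limits form a Cauchy family; note that with this scheme Dunford--Pettis is not really needed (uniform integrability is used directly), and the final ``density in $L^{\infty}(\Omega;A)$'' step is unnecessary (and would be delicate, since bounded continuous functions are not norm-dense in $L^{\infty}$): the error estimate $|L_{\varepsilon}(f)-L^{N}_{\varepsilon}(f)|\le\|u_{\varepsilon}-u^{N}_{\varepsilon}\|_{L^{1}}\|f\|_{L^{\infty}(\Omega;A)}$ already handles every $f\in L^{\infty}(\Omega;A)$ at once.

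The one substantive gap is the parenthetical ``assuming (as is standard) that $A$ is separable.'' In the framework of this paper an algebra with mean value is \emph{not} assumed separable, and the applications in Section 5 use $A=AP(\mathbb{R}^{N+1})$ and $WAP$, which are not separable for the sup norm (distinct characters are at mutual distance $2$); moreover $L^{p'}(\Omega\times\Delta(A),\beta)$ is then also non-separable, so the difficulty cannot be removed by a routine diagonal argument over a countable family of test functions dense for the limiting seminorm. Since sequential weak$\ast$ compactness of bounded sets in a dual space genuinely requires some separability of the predual (it fails, e.g., in $(\ell^{\infty})'$), your extraction step does not cover the generality in which the theorem is stated and used here. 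This is precisely the technical point the cited references must address; to close it you should either add the separability hypothesis explicitly (and then, for the homogenization argument, replace $A$ by the closed subalgebra with mean value generated by the countably many data actually tested against --- the coefficients $A_{0},A_{1}$ and a countable family of smooth test functions --- which is separable and suffices for Theorem \ref{t4.1}), or invoke/reproduce the finer extraction argument of \cite{Casado, CMP, NA} valid for general algebras. Apart from this point, the identification of the limit and the whole of part (ii) are correct.
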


\begin{theorem}
\label{t3.3}Let $1<p<\infty $. Let $(u_{\varepsilon })_{\varepsilon \in E}$
be a bounded sequence in $W^{1,p}(\Omega )$. Then there exist a subsequence $%
E^{\prime }$ of $E$, and a couple $(u_{0},u_{1})\in W^{1,p}(\Omega
;I_{A}^{p}(\mathbb{R}^{N}))\times L^{p}(\Omega ;\mathcal{B}_{A}^{1,p}(%
\mathbb{R}^{N}))$ such that, as $E^{\prime }\ni \varepsilon \rightarrow 0$, 
\begin{equation*}
u_{\varepsilon }\rightarrow u_{0}\ \mbox{in }L^{p}(\Omega )\mbox{-weak }%
\Sigma \mbox{;}
\end{equation*}%
\begin{equation*}
\frac{\partial u_{\varepsilon }}{\partial x_{i}}\rightarrow \frac{\partial
u_{0}}{\partial x_{i}}+\frac{\overline{\partial }u_{1}}{\partial y_{i}}%
\mbox{\ in }L^{p}(\Omega )\mbox{-weak }\Sigma \mbox{, }1\leq i\leq N.
\end{equation*}
\end{theorem}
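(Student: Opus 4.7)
The plan is to carry out the usual sigma-convergence compactness argument, modeled on Allaire's two-scale proof but adapted to the Banach-space setting of $\mathcal{B}_A^p(\mathbb{R}^N)$. First, since $(u_\varepsilon)$ and each $(\partial u_\varepsilon/\partial x_i)$ are bounded in $L^p(\Omega)$, successive applications of Theorem \ref{t3.2}(i) together with a diagonal extraction produce a subsequence $E' \subset E$ and functions $u_0, z_1, \ldots, z_N \in L^p(\Omega; \mathcal{B}_A^p(\mathbb{R}^N))$ such that $u_\varepsilon \to u_0$ and $\partial u_\varepsilon/\partial x_i \to z_i$ in $L^p(\Omega)$-weak $\Sigma$. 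It remains to identify these limits.

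Next I show that $u_0$ is independent of the microscopic variable, i.e. $u_0 \in L^p(\Omega; I_A^p(\mathbb{R}^N))$. I test with functions of the form $\varepsilon \Phi(x)\psi^\varepsilon(x)$ with $\Phi \in \mathcal{C}_0^\infty(\Omega)$ and $\psi$ a smooth element of $A$ for which $\overline{\partial}\psi/\partial y_i \in A$. Integration by parts gives
\begin{equation*}
\varepsilon \int_\Omega \frac{\partial u_\varepsilon}{\partial x_i}\Phi\psi^\varepsilon\,dx = -\varepsilon \int_\Omega u_\varepsilon \frac{\partial \Phi}{\partial x_i}\psi^\varepsilon\,dx - \int_\Omega u_\varepsilon \Phi \Bigl(\frac{\overline{\partial}\psi}{\partial y_i}\Bigr)^\varepsilon dx.
\end{equation*}
The first two terms vanish as $\varepsilon \to 0$ (the explicit $\varepsilon$ factor beats the $L^p$-bound), while the third converges by weak $\Sigma$-convergence to $-\iint_{\Omega \times \Delta(A)} \widehat{u}_0 \Phi\,\mathcal{G}(\overline{\partial}\psi/\partial y_i)\,dxd\beta$, which must therefore vanish for every admissible $\Phi,\psi$. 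This yields $\overline{\partial}u_0/\partial y_i = 0$ for each $i$, placing $u_0$ in $L^p(\Omega; I_A^p(\mathbb{R}^N))$.

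To identify the macroscopic gradient, I then use test functions $\Phi(x)v^\varepsilon(x)$ with $v \in I_A^\infty$, so that no singular $\varepsilon^{-1}$ factor appears. Passing to the limit gives
\begin{equation*}
\iint_{\Omega \times \Delta(A)} \widehat{z}_i \Phi\,\widehat v\,dxd\beta = -\iint_{\Omega \times \Delta(A)} \widehat{u}_0 \frac{\partial \Phi}{\partial x_i}\,\widehat v\,dxd\beta.
\end{equation*}
Since $\widehat{u}_0$ and $\widehat v$ are invariant, an integration by parts in $x$ identifies the $I_A^p$-component of $z_i$ with $\partial u_0/\partial x_i$ in the distributional sense; in particular $u_0 \in W^{1,p}(\Omega; I_A^p(\mathbb{R}^N))$.

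Finally, set $\zeta_i = z_i - \partial u_0/\partial x_i$. From the previous step, the field $\zeta = (\zeta_1,\ldots,\zeta_N)$ is orthogonal to invariant test vector fields, and because the $\partial u_\varepsilon/\partial x_i$ are the components of a genuine gradient, its formal $y$-curl vanishes on $\Delta(A)$. Producing $u_1 \in L^p(\Omega; \mathcal{B}_A^{1,p}(\mathbb{R}^N))$ with $\overline{\partial}u_1/\partial y_i = \zeta_i$ is a de Rham-type problem in the Banach space $\mathcal{B}_A^{1,p}(\mathbb{R}^N)$: one must show that the closed range of the $y$-gradient coincides with the annihilator of the invariants. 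This last step is the main obstacle; it is handled via a Hilbertian projection when $p = 2$ and a Hahn--Banach/closed-range argument for general $p$, combined with the density of smooth elements in $\mathcal{B}_A^{1,p}(\mathbb{R}^N)$. Combining the four steps concludes the proof.
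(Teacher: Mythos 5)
First, note that the paper does not prove Theorem \ref{t3.3}: it is quoted from the literature (``see e.g. \cite{Casado, CMP, NA} for their justification''), so your proposal has to be measured against the standard proof given there. Your skeleton (compactness extraction via Theorem \ref{t3.2}, the $\varepsilon\Phi\psi^{\varepsilon}$ test to show $\overline{\nabla}_{y}u_{0}=0$, identification of the macroscopic part, then a de Rham-type argument for the corrector) is indeed the right template, and your Step 2 is correct as written. But two of the four steps have genuine gaps. In Step 3 you test with $\Phi(x)v^{\varepsilon}(x)$, $v\in I_{A}^{\infty}$. Admissible oscillating test functions in the definition of weak $\Sigma$-convergence must belong to $L^{p'}(\Omega;A)$, and elements of $I_{A}^{p}(\mathbb{R}^{N})$ are equivalence classes in the Besicovitch completion, not continuous functions, so $v(x/\varepsilon)$ is not defined for them; the only translation-invariant elements actually lying in $A\subset\mathrm{BUC}(\mathbb{R}^{N})$ are the constants. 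Testing with constants only yields $\partial_{x_{i}}\bigl(\int_{\Delta(A)}\widehat{u}_{0}\,d\beta\bigr)=\int_{\Delta(A)}\widehat{z}_{i}\,d\beta$, i.e. an identity between mean values, which in the non-ergodic case (the whole point of allowing $u_{0}\in W^{1,p}(\Omega;I_{A}^{p}(\mathbb{R}^{N}))$ rather than $W^{1,p}(\Omega)$, cf. Remark \ref{r2.1}) does not identify the invariant component of $z_{i}$ with $\partial u_{0}/\partial x_{i}$, nor does it show that $u_{0}$ has $x$-derivatives valued in $I_{A}^{p}(\mathbb{R}^{N})$.

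The standard way around this, and the route taken in \cite{Casado, CMP, NA}, is to test the full gradient against $\Phi(x)\,\boldsymbol{w}(x/\varepsilon)$ with $\boldsymbol{w}\in(A^{\infty})^{N}$ satisfying $\overline{\Div}_{y}\boldsymbol{w}=0$: the divergence-free constraint kills the $\varepsilon^{-1}$ term after integration by parts, and passing to the limit gives the orthogonality relation against all such fields, from which \emph{both} the regularity of $u_{0}$ and the existence of $u_{1}$ are extracted by a single annihilator argument. This brings us to your Step 4, which you rightly flag as the main obstacle but then dispatch too quickly: the assertion that the limit field is ``curl-free on $\Delta(A)$, hence a gradient'' is not available in this generality, since $\Delta(A)$ is merely a compact space carrying the $N$-parameter group induced by translations, and a Poincar\'{e}-type lemma does not follow from Hahn--Banach alone. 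The actual content of the cited references is precisely the nontrivial functional-analytic lemma you are assuming: the construction of $\mathcal{B}_{\#A}^{1,p}(\mathbb{R}^{N})$ so that the $y$-gradients form a closed subspace, together with the density of $\{\widehat{\boldsymbol{w}}:\boldsymbol{w}\in(A^{\infty})^{N},\ \overline{\Div}_{y}\boldsymbol{w}=0\}$ in the divergence-free part of $L^{p'}(\Delta(A))^{N}$ and the resulting characterization of its annihilator as the set of gradients (this is where $1<p<\infty$ and the duality $L^{p}$--$L^{p'}$ over $\Delta(A)$ enter; the Hilbertian projection argument covers only $p=2$). Without proving that lemma, or invoking it from the references, the existence of $u_{1}$ is not established, so the proposal as it stands is an outline of the correct strategy rather than a complete proof.
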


\begin{remark}
\label{r2.1}\emph{In the above result, }$I_{A}^{p}(\mathbb{R}^{N})$\emph{\
stands for the space of invariant functions in }$\mathcal{B}_{A}^{p}(\mathbb{%
R}^{N})$\emph{\ under the group of transformation }$T(y)$\emph{\ of the
preceding subsection: }$u\in I_{A}^{p}(\mathbb{R}^{N})$\emph{\ if and only
if }$\overline{\nabla }_{y}u=0$\emph{. If we assume the algebra }$A$\emph{\
to be ergodic, then }$I_{A}^{p}(\mathbb{R}^{N})$\emph{\ consists of constant
functions, so that the function }$u_{0}$\emph{\ in Theorem \ref{t3.3} does
not depend on }$y$\emph{, that is, }$u_{0}\in W^{1,p}(\Omega )$\emph{. We
thus recover the already known result proved in \cite{NA} in the case of
ergodic algebras.}
\end{remark}

The next result deals with the $\Sigma $-convergence of a product of
sequences.

\begin{theorem}[{\protect\cite[Theorem 6]{DPDE}}]
\label{t3.5}Let $1<p,q<\infty $ and $r\geq 1$ be such that $1/r=1/p+1/q\leq
1 $. Assume $(u_{\varepsilon })_{\varepsilon \in E}\subset L^{q}(\Omega )$
is weakly $\Sigma $-convergent in $L^{q}(\Omega )$ to some $u_{0}\in
L^{q}(\Omega ;\mathcal{B}_{A}^{q}(\mathbb{R}^{N}))$, and $(v_{\varepsilon
})_{\varepsilon \in E}\subset L^{p}(\Omega )$ is strongly $\Sigma $%
-convergent in $L^{p}(\Omega )$ to some $v_{0}\in L^{p}(\Omega ;\mathcal{B}%
_{A}^{p}(\mathbb{R}^{N}))$. Then the sequence $(u_{\varepsilon
}v_{\varepsilon })_{\varepsilon \in E}$ is weakly $\Sigma $-convergent in $%
L^{r}(\Omega )$ to $u_{0}v_{0}$.
\end{theorem}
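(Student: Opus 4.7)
The plan is to reduce the verification of the weak $\Sigma$-convergence $u_\varepsilon v_\varepsilon\to u_0 v_0$ in $L^r(\Omega)$ to testing against oscillating functions and to transfer the oscillations of one factor into those of the other using the algebra structure of $A$. Fix $\phi\in L^{r'}(\Omega;A)$ with $1/r+1/r'=1$; I aim to show
\[
\int_\Omega u_\varepsilon v_\varepsilon\phi^\varepsilon\,dx\longrightarrow\iint_{\Omega\times\Delta(A)}\widehat{u_0}\,\widehat{v_0}\,\widehat{\phi}\,dx\,d\beta.
\]
Hölder's inequality together with the uniform $L^q$- and $L^p$-bounds furnished by weak (resp.\ strong) $\Sigma$-convergence shows that $u_\varepsilon v_\varepsilon$ is bounded in $L^r(\Omega)$, and the same Hölder estimate applied in $\mathcal{B}_A^r(\mathbb{R}^N)$ confirms that $u_0v_0\in L^r(\Omega;\mathcal{B}_A^r(\mathbb{R}^N))$, so the candidate limit makes sense.

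Next, given $\delta>0$, the density of $L^p(\Omega;A)$ in $L^p(\Omega;\mathcal{B}_A^p(\mathbb{R}^N))$ (transported through the Gelfand isomorphism $\mathcal{G}_1$) provides $\psi_\delta\in L^p(\Omega;A)$ with $\|\widehat{v_0}-\widehat{\psi_\delta}\|_{L^p(\Omega\times\Delta(A))}<\delta$. Splitting
\[
\int_\Omega u_\varepsilon v_\varepsilon\phi^\varepsilon\,dx=\int_\Omega u_\varepsilon(v_\varepsilon-\psi_\delta^\varepsilon)\phi^\varepsilon\,dx+\int_\Omega u_\varepsilon(\psi_\delta\phi)^\varepsilon\,dx,
\]
the second integral is tame because $A$ is an algebra: Hölder together with the identity $1/q'=1/p+1/r'$ (a rewriting of $1/r=1/p+1/q$) places $\psi_\delta\phi$ in $L^{q'}(\Omega;A)$, making it admissible for the weak $\Sigma$-convergence of $u_\varepsilon$, so this second term converges to $\iint\widehat{u_0}\,\widehat{\psi_\delta}\,\widehat{\phi}\,dx\,d\beta$.

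The first integral is controlled by $C\|v_\varepsilon-\psi_\delta^\varepsilon\|_{L^p(\Omega)}$ via Hölder, and the \textbf{main obstacle} is to prove
\[
\limsup_{\varepsilon\to 0}\|v_\varepsilon-\psi_\delta^\varepsilon\|_{L^p(\Omega)}\le C\delta.
\]
For $p=2$ this is straightforward: expand the square and use strong $\Sigma$-convergence for $\|v_\varepsilon\|_{L^2}^2$ and $\|\psi_\delta^\varepsilon\|_{L^2}^2$ together with weak $\Sigma$-convergence for the cross term. For general $p$ a more delicate argument is required; a natural route is to first establish the duality characterization that $v_\varepsilon\to v_0$ strongly $\Sigma$ in $L^p$ is equivalent to $\int_\Omega v_\varepsilon w_\varepsilon\,dx\to\iint\widehat{v_0}\widehat{w_0}\,dx\,d\beta$ for every sequence $(w_\varepsilon)$ that is weakly $\Sigma$-convergent in $L^{p'}$, and then to apply this with a suitable truncation of $w_\varepsilon=|v_\varepsilon-\psi_\delta^\varepsilon|^{p-2}(v_\varepsilon-\psi_\delta^\varepsilon)$. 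Combining the two estimates and then using $|\iint\widehat{u_0}(\widehat{v_0}-\widehat{\psi_\delta})\widehat{\phi}\,dx\,d\beta|\le C\delta$ (Hölder once more), letting $\varepsilon\to 0$ and then $\delta\to 0$ yields the desired convergence, and density of $L^{r'}(\Omega;A)$ in a sufficiently rich class of test functions completes the argument.
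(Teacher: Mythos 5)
This theorem is quoted in the paper from \cite[Theorem 6]{DPDE} without proof, so your argument can only be assessed on its own terms. Its architecture is the standard one and is sound as far as it goes: approximate $v_{0}$ by $\psi _{\delta }\in L^{p}(\Omega ;A)$, split the integral, and absorb $\psi _{\delta }$ into the test function using that $A$ is an algebra together with the exponent identity $1/q^{\prime }=1/p+1/r^{\prime }$; the second term of your splitting is then handled correctly by the weak $\Sigma $-convergence of $(u_{\varepsilon })$. The genuine gap is precisely the step you yourself single out as the main obstacle: proving $\limsup_{\varepsilon }\left\Vert v_{\varepsilon }-\psi _{\delta }^{\varepsilon }\right\Vert _{L^{p}(\Omega )}\leq C\delta $ for $p\neq 2$ from the definition of strong $\Sigma $-convergence used here (weak $\Sigma $-convergence plus convergence of norms). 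The route you sketch is circular: the ``duality characterization'' you propose to ``first establish'' --- that strong $\Sigma $-convergence of $(v_{\varepsilon })$ in $L^{p}$ implies $\int_{\Omega }v_{\varepsilon }w_{\varepsilon }\,dx\rightarrow \iint_{\Omega \times \Delta (A)}\widehat{v}_{0}\widehat{w}_{0}\,dxd\beta $ for every weakly $\Sigma $-convergent $(w_{\varepsilon })\subset L^{p^{\prime }}(\Omega )$ --- is exactly the case $q=p^{\prime }$, $r=1$ of the theorem, and the general case reduces to that case by absorbing $\phi ^{\varepsilon }$ into $u_{\varepsilon }$. So you have reduced the theorem to itself, and the only hard implication (for $p\neq 2$) is left unproved; your argument is complete only for $p=2$.

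The missing step can be closed without circularity by uniform convexity. For admissible $\psi $ (say $\psi \in \mathcal{C}(\overline{\Omega };A)$) one has $\left\Vert \psi ^{\varepsilon }\right\Vert _{L^{p}(\Omega )}\rightarrow \left\Vert \widehat{\psi }\right\Vert _{L^{p}(\Omega \times \Delta (A))}$, and $\liminf_{\varepsilon }\left\Vert w_{\varepsilon }\right\Vert _{L^{p}(\Omega )}\geq \left\Vert \widehat{w}_{0}\right\Vert _{L^{p}(\Omega \times \Delta (A))}$ whenever $w_{\varepsilon }\rightarrow w_{0}$ weakly $\Sigma $. Applying Clarkson's inequality (for $p\geq 2$; its dual form for $1<p<2$) to the pair $(v_{\varepsilon },\psi _{\delta }^{\varepsilon })$ and using the assumed convergence $\left\Vert v_{\varepsilon }\right\Vert _{L^{p}(\Omega )}\rightarrow \left\Vert \widehat{v}_{0}\right\Vert _{L^{p}(\Omega \times \Delta (A))}$ yields $\limsup_{\varepsilon }\left\Vert \tfrac{1}{2}(v_{\varepsilon }-\psi _{\delta }^{\varepsilon })\right\Vert _{L^{p}(\Omega )}^{p}\leq \tfrac{1}{2}\left( \left\Vert \widehat{v}_{0}\right\Vert ^{p}+\left\Vert \widehat{\psi }_{\delta }\right\Vert ^{p}\right) -\left\Vert \tfrac{1}{2}(\widehat{v}_{0}+\widehat{\psi }_{\delta })\right\Vert ^{p}$, and the right-hand side tends to $0$ as $\delta \rightarrow 0$ because $\left\Vert \widehat{v}_{0}-\widehat{\psi }_{\delta }\right\Vert <\delta $; this is the Radon--Riesz-type lemma on which the cited proof rests. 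Two further small points: the uniform bound on $\left\Vert \phi ^{\varepsilon }\right\Vert _{L^{r^{\prime }}(\Omega )}$ you invoke in the H\"{o}lder estimate is not automatic for arbitrary $\phi \in L^{r^{\prime }}(\Omega ;A)$, so the argument should be run for $\phi $ in a dense admissible class and extended using the $L^{r}$-bound on $u_{\varepsilon }v_{\varepsilon }$; and when $r=1$ (so $r^{\prime }=\infty $) no such density is available, but none is needed since your splitting works directly for every $\phi \in L^{\infty }(\Omega ;A)$.
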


As a consequence of the above theorem the following holds.

\begin{corollary}
\label{c3.1}Let $(u_{\varepsilon })_{\varepsilon \in E}\subset L^{p}(\Omega
) $ and $(v_{\varepsilon })_{\varepsilon \in E}\subset L^{p^{\prime
}}(\Omega )\cap L^{\infty }(\Omega )$ ($1<p<\infty $ and $p^{\prime
}=p/(p-1) $) be two sequences such that: \emph{(i)} $u_{\varepsilon
}\rightarrow u_{0}$ in $L^{p}(\Omega )$-weak $\Sigma $; \emph{(ii)} $%
v_{\varepsilon }\rightarrow v_{0}$ in $L^{p^{\prime }}(\Omega )$-strong $%
\Sigma $; \emph{(iii)} $(v_{\varepsilon })_{\varepsilon \in E}$ is bounded
in $L^{\infty }(\Omega )$. Then $u_{\varepsilon }v_{\varepsilon }\rightarrow
u_{0}v_{0}$ in $L^{p}(\Omega )$-weak $\Sigma $.
\end{corollary}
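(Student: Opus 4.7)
The plan is to combine Theorem \ref{t3.5} with a compactness-uniqueness argument. Since $(v_\varepsilon)$ is uniformly bounded in $L^\infty(\Omega)$ by hypothesis (iii) and $(u_\varepsilon)$ is bounded in $L^p(\Omega)$ (being weakly $\Sigma$-convergent there), the product sequence $(u_\varepsilon v_\varepsilon)_{\varepsilon \in E}$ is bounded in $L^p(\Omega)$. By Theorem \ref{t3.2}(i), along some subsequence $E' \subseteq E$ we obtain $u_\varepsilon v_\varepsilon \to w_0$ in $L^p(\Omega)$-weak $\Sigma$ for some $w_0 \in L^p(\Omega; \mathcal{B}_A^p(\mathbb{R}^N))$. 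The bulk of the proof is to identify $w_0 = u_0 v_0$; a Urysohn subsequence argument then promotes the convergence from $E'$ to the original sequence indexed by $E$.

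For the identification, Theorem \ref{t3.5} applied with exponents $(p, p')$ (which are admissible for the target $r = 1$ since $1/p + 1/p' = 1$) yields
$$u_\varepsilon v_\varepsilon \to u_0 v_0 \quad \text{in } L^1(\Omega)\text{-weak } \Sigma.$$
To match this against the $L^p$-weak $\Sigma$ limit, I first upgrade $\widehat{v_0}$ to lie in $L^\infty(\Omega \times \Delta(A))$, bounded by $M := \sup_\varepsilon \|v_\varepsilon\|_{L^\infty(\Omega)}$: for every $f \in L^1(\Omega; A)$ the estimate $\bigl|\int_\Omega v_\varepsilon f^\varepsilon dx\bigr| \le M \int_\Omega |f^\varepsilon|\, dx$ passes to the limit (via the strong $\Sigma$-convergence of $v_\varepsilon$), giving $\bigl|\int \widehat{v_0}\widehat{f}\, dx\, d\beta\bigr| \le M \int |\widehat{f}|\, dx\, d\beta$ and hence $\|\widehat{v_0}\|_\infty \le M$. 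Consequently $u_0 v_0 \in L^p(\Omega; \mathcal{B}_A^p(\mathbb{R}^N))$. Now test both weak $\Sigma$-limits against any $f$ in the intersection $L^{p'}(\Omega; A) \cap L^\infty(\Omega; A)$: each side computes the common numerical limit $\lim_\varepsilon \int_\Omega u_\varepsilon v_\varepsilon f^\varepsilon\, dx$, whence
$$\int_{\Omega \times \Delta(A)} \widehat{w_0}\,\widehat{f}\,dx\, d\beta \;=\; \int_{\Omega \times \Delta(A)} \widehat{u_0 v_0}\,\widehat{f}\,dx\, d\beta.$$
Because $L^{p'}(\Omega; A) \cap L^\infty(\Omega; A)$ is dense in $L^{p'}(\Omega; A)$ (truncate by $\{x : \|f(x,\cdot)\|_A \le n\}$ and invoke dominated convergence in $\Omega$), this identity extends to all admissible test functions and yields $w_0 = u_0 v_0$.

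The main obstacle is ensuring that the product $u_0 v_0$ actually lives in the target space $L^p(\Omega; \mathcal{B}_A^p(\mathbb{R}^N))$: a direct application of Hölder on $\Omega \times \Delta(A)$ only places $u_0 v_0$ in $L^1$. The $L^\infty$-bound on $v_\varepsilon$, transferred to $\widehat{v_0}$ via the weak$\ast$-type lower semicontinuity on the spectrum as above, supplies precisely the additional integrability needed. The uniqueness of the weak $\Sigma$-limit on a dense subclass of test functions then closes the argument, and the Urysohn principle extends the conclusion to the full sequence $E$.
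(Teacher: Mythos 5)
Your proof is correct and follows essentially the route the paper intends: the paper states the corollary as a direct consequence of Theorem \ref{t3.5}, and your argument is exactly that, supplemented by the standard upgrades (boundedness of $(u_{\varepsilon}v_{\varepsilon})$ in $L^{p}(\Omega)$ via the $L^{\infty}$ bound, extraction of a weak $\Sigma$-limit by Theorem \ref{t3.2}, identification with $u_{0}v_{0}$ on a dense class of test functions, and the subsequence principle). The only cosmetic point is in the transfer of the $L^{\infty}$ bound to $\widehat{v}_{0}$: the pairing $\int_{\Omega}v_{\varepsilon}f^{\varepsilon}\,dx$ is a priori controlled only for $f$ in $L^{p}(\Omega;A)$ (not all of $L^{1}(\Omega;A)$), so one should argue on the dense subclass $L^{p}(\Omega;A)\cap L^{1}(\Omega;A)$ and then conclude by duality, which is a routine fix.
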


Now, assume that the algebra $A$ is introverted. Then its spectrum is a
compact topological semigroup whose kernel is a compact topological group,
so that we can define, as in the preceding subsection, the convolution over $%
\Delta (A)$. Our aim in the next result is to link the $\Sigma $-convergence
concept to the convolution over the spectrum $\Delta (A)$ of $A$. To see
this, let $p,q,m\geq 1$ be real numbers such that $\frac{1}{p}+\frac{1}{q}=1+%
\frac{1}{m}$. Let $(u_{\varepsilon })_{\varepsilon >0}\subset L^{p}(\Omega )$
and $(v_{\varepsilon })_{\varepsilon >0}\subset L^{q}(\mathbb{R}^{N})$ be
two sequences. One may view $u_{\varepsilon }$ as defined in the whole $%
\mathbb{R}^{N}$ by taking its extension by zero outside $\Omega $. Define 
\begin{equation*}
(u_{\varepsilon }\ast v_{\varepsilon })(x)=\int_{\mathbb{R}%
^{N}}u_{\varepsilon }(t)v_{\varepsilon }(x-t)\;dt\ \ (x\in \mathbb{R}^{N}),
\end{equation*}%
which lies in $L^{m}(\mathbb{R}^{N})$. Then

\begin{theorem}[{\protect\cite[Theorem 6.2]{NA2014}}]
\label{t3.4}Let $(u_{\varepsilon })_{\varepsilon >0}$ and $(v_{\varepsilon
})_{\varepsilon >0}$ be as above. Assume that, as $\varepsilon \rightarrow 0$%
, $u_{\varepsilon }\rightarrow u_{0}$ in $L^{p}(\Omega )$-weak $\Sigma $ and 
$v_{\varepsilon }\rightarrow v_{0}$ in $L^{q}(\mathbb{R}^{N})$-strong $%
\Sigma $, where $u_{0}$ and $v_{0}$ are in $L^{p}(\Omega ;\mathcal{B}%
_{A}^{p}(\mathbb{R}^{N}))$ and $L^{q}(\mathbb{R}^{N};\mathcal{B}_{A}^{q}(%
\mathbb{R}^{N}))$ respectively. Assume further that the algebra wmv $A$ is
introverted. Then, as $\varepsilon \rightarrow 0$, 
\begin{equation*}
u_{\varepsilon }\ast v_{\varepsilon }\rightarrow u_{0}\ast \ast v_{0}%
\mbox{
in }L^{m}(\Omega )\mbox{-weak }\Sigma \mbox{.}
\end{equation*}
\end{theorem}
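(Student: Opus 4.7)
The plan is to test against an arbitrary $\phi\in L^{m'}(\Omega;A)$ with $1/m+1/m'=1$ and reduce the pairing to a double integral where the convolution structure can be untangled. Extending $u_\varepsilon$ and $\phi$ by zero outside $\Omega$ and writing $\phi^\varepsilon(x)=\phi(x,x/\varepsilon)$, Fubini and the change of variable $x\mapsto t+y$ give
\begin{equation*}
\int_{\Omega}(u_\varepsilon\ast v_\varepsilon)(x)\phi^\varepsilon(x)\,dx=\int_{\mathbb{R}^N}u_\varepsilon(t)\,\Phi_\varepsilon(t)\,dt,\qquad \Phi_\varepsilon(t):=\int_{\mathbb{R}^N}v_\varepsilon(y)\,\phi(t+y,(t+y)/\varepsilon)\,dy.
\end{equation*}
The proof then reduces to showing that $\Phi_\varepsilon$ is asymptotically equivalent, in $L^{p'}(\mathbb{R}^N)$, to a concrete test-function sequence against which the weak $\Sigma$-convergence $u_\varepsilon\to u_0$ yields precisely the pairing of $u_0\ast\ast v_0$ with $\phi$.

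To implement this, I would first use density of finite sums $\sum_i\psi_i(x)g_i(x/\varepsilon)$ with $\psi_i\in\mathcal{C}_0^{\infty}(\mathbb{R}^N)$ and $g_i\in A$ in $L^{m'}(\mathbb{R}^N;A)$, together with the analogous approximation of $v_0$, to reduce the identity to a single tensor $\phi(x,\xi)=\psi(x)g(\xi)$ and $v_0(y,\eta)=\varphi(y)h(\eta)$. The crucial algebraic step is the identity $g((t+y)/\varepsilon)=(\tau_{t/\varepsilon}g)(y/\varepsilon)$, which converts the single oscillation $(t+y)/\varepsilon$ into a $t$-dependent translate acting only on the $y$-oscillation. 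The strong $\Sigma$-convergence of $v_\varepsilon$ then permits the replacement of $v_\varepsilon(y)$ by $\varphi(y)h(y/\varepsilon)$ up to a norm-vanishing error; the $y$-integration produces a mean value of $h(\cdot)\tau_{t/\varepsilon}g(\cdot)$, which by Theorem~\ref{t3.1} is realized as the Haar integral over the compact group $K(A)$, giving exactly the convolution $\widehat{h}\,\widehat{\ast}\,\widehat{g}$ evaluated at the spectral point attached to $t/\varepsilon$. Finally, passing the weak $\Sigma$-convergence of $u_\varepsilon$ through this $t$-dependent factor produces the double convolution $\widehat{u}_0\ast\ast\widehat{v}_0$ paired with $\widehat{\phi}$, which is the definition of the expected limit $u_0\ast\ast v_0$.

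The main obstacle is to guarantee that $t\mapsto\Phi_\varepsilon(t)$ genuinely qualifies as an admissible test function for the weak $\Sigma$-convergence of $u_\varepsilon$: one needs the limit coefficient $t\mapsto M_\eta\!\left(h(\eta)(\tau_{\,\cdot\,}g)(\eta)\right)$, evaluated along the $t/\varepsilon$-oscillation, to land back in $L^{p'}(\mathbb{R}^N;A)$. This is exactly where the introvertedness hypothesis enters: by Definition~\ref{d3.1}, $\mu\mapsto T_\mu g$ sends $A$ into $A$, which is precisely what is required so that the $y$-mean value lands back in an algebra with mean value and so that the convolution $\widehat{\ast}$ on $\Delta(A)$ is well-defined on the kernel $K(A)$. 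Once admissibility is secured, the uniform Young bound $\|u_\varepsilon\ast v_\varepsilon\|_{L^m(\mathbb{R}^N)}\leq\|u_\varepsilon\|_{L^p(\mathbb{R}^N)}\|v_\varepsilon\|_{L^q(\mathbb{R}^N)}$ controls tails and legitimizes the Fubini and density reductions; the conceptual heart of the argument is the conversion of the single oscillation $(t+y)/\varepsilon$ into two asymptotically independent ones via the group structure of $K(A)$.
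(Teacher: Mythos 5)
First, a point of context: the paper does not prove Theorem \ref{t3.4} at all --- it is quoted verbatim from \cite[Theorem 6.2]{NA2014} --- so there is no in-paper proof to compare against; your sketch can only be judged on its own merits (its architecture does appear to follow the spirit of the cited source: change of variables, translation identity, kernel $K(A)$ and Haar measure).

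On the merits, the skeleton is right but the central analytic step is asserted rather than proved. After Fubini and the reduction to tensors, the quantity you must handle is
$\int_{\mathbb{R}^N}\varphi(y)\,\psi(t+y)\,h(y/\varepsilon)\,g\bigl((t+y)/\varepsilon\bigr)\,dy$,
and you claim that ``the $y$-integration produces a mean value of $h(\cdot)\,\tau_{t/\varepsilon}g(\cdot)$.'' That is exactly the delicate point: the function being averaged, $h\,\tau_{t/\varepsilon}g$, changes with $\varepsilon$, whereas the defining property of the mean value (weak$\ast$ convergence of $w(\cdot/\varepsilon)$ to $M(w)$) holds for a \emph{fixed} element $w\in A$ and gives no uniformity over the orbit of translates. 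Replacing the inner oscillation by $M(h\,\tau_{t/\varepsilon}g)$ with an error that is small in $L^{p'}(\mathbb{R}^N_t)$, uniformly enough to be paired afterwards with $u_\varepsilon$, requires a genuine argument --- for instance recasting the problem as a two-scale limit of the product sequence $u_\varepsilon\otimes v_\varepsilon$ on $\mathbb{R}^{2N}$ for the product algebra, tested against $(t,y,\xi,\eta)\mapsto\phi(t+y,\xi\eta)$ (semigroup operation on $\Delta(A)$), which is precisely the kind of admissibility that introversion is meant to deliver. Relatedly, you correctly identify the admissibility of $\Phi_\varepsilon$ as ``the main obstacle,'' but you resolve it only by pointing to Definition \ref{d3.1}: you never actually show that $\xi\mapsto M(h\,\tau_{\xi}g)$ lies in $A$ (this needs introversion plus a limiting argument, since $M$ is not an element of $\Delta(A)$), nor that $\Phi_\varepsilon(t)=\chi(t,t/\varepsilon)+r_\varepsilon(t)$ with $\chi\in L^{p'}(\mathbb{R}^N;A)$ and $\|r_\varepsilon\|_{L^{p'}}\to0$. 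Finally, the identification of the resulting limit with $\widehat{u}_0\ast\ast\widehat{v}_0$ paired with $\widehat{\phi}$ uses that $\beta$ is supported on $K(A)$ and that $s-r\in K(A)$ for $s\in\Delta(A)$, $r\in K(A)$; this too is stated but not carried out. The preparatory reductions (Young's inequality, zero extension, density of tensors, and the strong-$\Sigma$ replacement of $v_\varepsilon$ by $(\varphi h)^\varepsilon$ up to a small $L^q$ error) are fine, but as it stands the proof of the interchange-of-limits step --- the heart of the theorem --- is missing.
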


In practice, one deals with the evolutionary version of the concept of $%
\Sigma $-convergence. Such concept requires some further notions such as
those related to the product of algebras with mean value. Let $A_{y}$ (resp. 
$A_{\tau }$) be an algebra with mean value on $\mathbb{R}_{y}^{N}$ (resp. $%
\mathbb{R}_{\tau }$). We define their product denoted by $A=A_{\tau }\odot
A_{y}$ as the closure in $BUC(\mathbb{R}_{y,\tau }^{N+1})$ of the tensor
product $A_{\tau }\otimes A_{y}=\{\sum_{\mbox{finite}}u_{i}\otimes
v_{i}:u_{i}\in A_{\tau },\;v_{i}\in A_{y}\}$. It is a well known fact that $%
A_{y}\odot A_{\tau }$ is an algebra with mean value on $\mathbb{R}^{N+1}$ (
see e.g. \cite{Hom1, CMP}).

With this in mind, let $A=A_{\tau }\odot A_{y}$ be as above. The same letter 
$\mathcal{G}$ will denote the Gelfand transformation on $A_{y}$, $A_{\tau }$
and $A$, as well. Points in $\Delta (A_{y})$ (resp. $\Delta (A_{\tau })$)
are denoted by $s$ (resp. $s_{0}$). The compact space $\Delta (A_{y})$
(resp. $\Delta (A_{\tau })$) is equipped with the $M$-measure $\beta _{y}$
(resp. $\beta _{\tau }$), for $A_{y}$ (resp. $A_{\tau }$). We have $\Delta
(A)=\Delta (A_{\tau })\times \Delta (A_{y})$ (Cartesian product) and the $M$%
-measure for $A$, with which $\Delta (A)$ is equipped, is precisely the
product measure $\beta =\beta _{\tau }\otimes \beta _{y}$ (see \cite{Hom1}).
Finally, let $0<T<\infty $. We set $Q=\left( 0,T\right) \times \Omega $ as
in Section 1 (an open cylinder in $\mathbb{R}^{N+1}$) and $\mathcal{O}%
=Q\times \mathbb{R}^{N}$.

This being so, a sequence $\left( u_{\varepsilon }\right) _{\varepsilon
>0}\subset L^{p}\left( Q\right) $\ $(1\leq p<\infty )$\ is said to weakly $%
\Sigma $-converge\ in $L^{p}\left( Q\right) $\ to some $u_{0}\in L^{p}(Q;%
\mathcal{B}_{A}^{p}(\mathbb{R}^{N+1}))$\ if as $\varepsilon \rightarrow 0$, 
\begin{equation*}
\int_{Q}u_{\varepsilon }\left( t,x\right) f\left( t,x,\frac{t}{\varepsilon },%
\frac{x}{\varepsilon }\right) dxdt\rightarrow \iint_{Q\times \Delta (A)}%
\widehat{u}_{0}\left( t,x,s_{0},s\right) \widehat{f}\left(
t,x,s_{0},s\right)\; dxdtd\beta
\end{equation*}%
for all $f\in L^{p^{\prime }}\left( Q;A\right) $. We may also define the
weak $\Sigma $-convergence in $L^{p}\left( \mathcal{O}\right) $ as follows: $%
\left( u_{\varepsilon }\right) _{\varepsilon >0}\subset L^{p}\left( \mathcal{%
O}\right) $ weakly $\Sigma $-converges to $u_{0}\in L^{p}(\mathcal{O};%
\mathcal{B}_{A}^{p}(\mathbb{R}^{N+1}))$ if 
\begin{equation*}
\int_{\mathcal{O}}u_{\varepsilon }\left( t,x,v\right) f\left( t,x,\frac{t}{%
\varepsilon },\frac{x}{\varepsilon },v\right)\; dxdtdv\rightarrow \iint_{%
\mathcal{O}\times \Delta (A)}\widehat{u}_{0}\left( t,x,s_{0},s,v\right) 
\widehat{f}\left( t,x,s_{0},s,v\right)\; dxdtdvd\beta
\end{equation*}%
for any $f\in L^{p^{\prime }}\left( \mathcal{O};A\right) $.

\begin{remark}
\label{r3.1}\emph{The conclusions of Theorems \ref{t3.2}-\ref{t3.4} are
still valid mutatis mutandis in the present context (change }$\Omega $\emph{%
\ into }$Q$\emph{\ in Theorem \ref{t3.2}, }$W^{1,p}(\Omega )$\emph{\ into }$%
L^{p}(0,T;W^{1,p}(\Omega ))$\emph{, }$W^{1,p}(\Omega ;I_{A}^{p}(\mathbb{R}%
^{N}))\times L^{p}(\Omega ;\mathcal{B}_{A}^{1,p}(\mathbb{R}^{N}))$\emph{\
into }$L^{p}(0,T;W^{1,p}(\Omega ;I_{A}^{p}(\mathbb{R}^{N})))\times L^{p}(Q;%
\mathcal{B}_{A_{\tau }}^{p}(\mathbb{R}_{\tau };\mathcal{B}_{A_{y}}^{1,p}(%
\mathbb{R}^{N})))$\emph{), provided }$A$\emph{\ is introverted in Theorem %
\ref{t3.4}.}
\end{remark}

\section{Homogenization results}

Throughout this section, we consider the algebras wmv $A_{y}$ and $A_{\tau }$
to be as in the end of the preceding section. We further assume that $%
A_{\tau }$ is introverted.

With this in mind, let $(\boldsymbol{u}_{\varepsilon },f_{\varepsilon
})_{\varepsilon >0}$ be the sequence of solutions to (\ref{1.1})-(\ref{1.5}%
). In view of Lemma \ref{l2.1'}, there is a positive constant $C$
independent of $\varepsilon >0$ such that 
\begin{equation*}
\sup_{\varepsilon >0}\left\Vert \frac{\partial \boldsymbol{u}_{\varepsilon }%
}{\partial t}\right\Vert _{L^{2}(0,T;V^{\prime })}\leq C.
\end{equation*}%
This, together with the inequality (\ref{2.3''}) in Lemma \ref{l2.1'} entail
the precompactness of the sequence $(\boldsymbol{u}_{\varepsilon
})_{\varepsilon >0}$ in $L^{2}(0,T;H)$. Thus, given an ordinary sequence $E$%
, there are a subsequence $E^{\prime }$ of $E$ and a function $\boldsymbol{u}%
_{0}\in L^{2}(Q)^{N}$ such that, as $E^{\prime }\ni \varepsilon \rightarrow
0 $ 
\begin{equation}
\boldsymbol{u}_{\varepsilon }\rightarrow \boldsymbol{u}_{0}\mbox{ in }%
L^{2}(Q)^{N}\mbox{.}  \label{4.1}
\end{equation}%
In view of (\ref{2.3''}) and by the diagonal process, one can find a
subsequence of $(\boldsymbol{u}_{\varepsilon })_{\varepsilon \in E^{\prime
}} $ (not relabeled) which weakly converges in $L^{2}(0,T;V)$ to the
function $\boldsymbol{u}_{0}$ (this means that $\boldsymbol{u}_{0}\in
L^{2}(0,T;V)$). From Theorem \ref{t3.3}, we infer the existence of a
function $\boldsymbol{u}_{1}=(u_{1}^{k})_{1\leq k\leq N}\in L^{2}(Q;\mathcal{%
B}_{A_{\tau }}^{2}(\mathbb{R}_{\tau };\mathcal{B}_{\#A_{y}}^{1,2}(\mathbb{R}%
_{y}^{N}))^{N})$ such that the convergence result 
\begin{equation}
\frac{\partial \boldsymbol{u}_{\varepsilon }}{\partial x_{i}}\rightarrow 
\frac{\partial \boldsymbol{u}_{0}}{\partial x_{i}}+\frac{\overline{\partial }%
\boldsymbol{u}_{1}}{\partial y_{i}}\mbox{ in }L^{2}(Q)^{N}\mbox{-weak }%
\Sigma \mbox{ }(1\leq i\leq N)  \label{4.2}
\end{equation}%
holds when $E^{\prime }\ni \varepsilon \rightarrow 0$. Still from Lemma \ref%
{l2.1'} (see (\ref{2.4''}) for $m=2$ and (\ref{2.5''}) therein) there exist
a subsequence of $E^{\prime }$ (still denoted by $E^{\prime }$) and two
functions $f_{0}\in L^{\infty }(0,T;L^{2}(\Omega \times \mathbb{R}^{N};%
\mathcal{B}_{A}^{2}(\mathbb{R}^{N+1})))$, $p\in L^{2}(Q;\mathcal{B}_{A}^{2}(%
\mathbb{R}^{N+1}))$ with $\int_{\Omega }pdx=0$ such that, as $E^{\prime }\ni
\varepsilon \rightarrow 0$, 
\begin{equation}
f_{\varepsilon }\rightarrow f_{0}\mbox{ in }L^{2}(Q\times \mathbb{R}^{N})%
\mbox{-weak }\Sigma  \label{4.3}
\end{equation}%
and 
\begin{equation}
p_{\varepsilon }\rightarrow p\mbox{ in }L^{2}(Q)\mbox{-weak }\Sigma .
\label{4.4}
\end{equation}%
We recall that $\frac{\partial \boldsymbol{u}_{0}}{\partial x_{i}}=\left( 
\frac{\partial u_{0}^{k}}{\partial x_{i}}\right) _{1\leq k\leq N}$ ($%
\boldsymbol{u}_{0}=(u_{0}^{k})_{1\leq k\leq N}$) and $\frac{\overline{%
\partial }\boldsymbol{u}_{1}}{\partial y_{i}}=\left( \frac{\overline{%
\partial }u_{1}^{k}}{\partial y_{i}}\right) _{1\leq k\leq N}$.

Our goal in this section is the study of the asymptotics (as $\varepsilon
\rightarrow 0$) of $(\boldsymbol{u}_{\varepsilon },f_{\varepsilon
},p_{\varepsilon })_{\varepsilon >0}$ under the following additional
assumption

\begin{itemize}
\item[(\textbf{A3})] $A_{i}(t,x,\cdot ,\cdot )\in \left[ B_{A}^{2}(\mathbb{R}%
_{y,\tau }^{N+1})\right] ^{N^{2}}$ for $i=0,1$ and for all $(t,x)\in 
\overline{Q}$.
\end{itemize}

\subsection{Passing to the limit $\protect\varepsilon \rightarrow 0$}

Let us first find the equation satisfied by $f_{0}$. To that end, let $\phi
\in \mathcal{C}_{0}^{\infty }(\mathcal{O})\otimes A^{\infty }$ (where we
recall that $\mathcal{O}=Q\times \mathbb{R}_{v}^{N}$ with $Q=(0,T)\times
\Omega $) and define $\phi ^{\varepsilon }\in \mathcal{C}_{0}^{\infty }(%
\mathcal{O})$ by $\phi ^{\varepsilon }(t,x,v)=\phi (t,x,t/\varepsilon
,x/\varepsilon ,v)$ for $(t,x,v)\in \mathcal{O}$. Multiplying the Vlasov
equation (\ref{1.1}) by $\phi ^{\varepsilon }$ and integrating by parts, 
\begin{equation*}
-\int_{\mathcal{O}}f_{\varepsilon }\left[ \frac{\partial \phi ^{\varepsilon }%
}{\partial t}+\varepsilon v\cdot \nabla \phi ^{\varepsilon }+(\boldsymbol{u}%
_{\varepsilon }-v)\cdot \nabla _{v}\phi ^{\varepsilon }\right] \;dxdtdv=0.
\end{equation*}%
The above equation is equivalent to the following one 
\begin{equation}
-\int_{\mathcal{O}}f_{\varepsilon }\left[ \left( \frac{\partial \phi }{%
\partial t}\right) ^{\varepsilon }+\frac{1}{\varepsilon }\left( \frac{%
\partial \phi }{\partial \tau }\right) ^{\varepsilon }+\varepsilon v\cdot
\left( \nabla \phi \right) ^{\varepsilon }+v\cdot \left( \nabla _{y}\phi
\right) ^{\varepsilon }+(\boldsymbol{u}_{\varepsilon }-v)\cdot \left( \nabla
_{v}\phi \right) ^{\varepsilon }\right]\; dxdtdv=0.  \label{4.5}
\end{equation}%
Multiplying the above equation by $\varepsilon $ and letting $E^{\prime }\ni
\varepsilon \rightarrow 0$ (where $E^{\prime }$ is as above), we end up with
(using the fact that the sequence $(f_{\varepsilon }(\boldsymbol{u}%
_{\varepsilon }-v))_{\varepsilon >0}$ is bounded in $L^{1}(\mathcal{O})$;
see Remark \ref{r2.3}) 
\begin{equation*}
\iint_{\mathcal{O}\times \Delta (A)}\widehat{f}_{0}\partial _{0}\widehat{%
\phi }\;dxdtdvd\beta =0
\end{equation*}%
where $\partial _{0}=\mathcal{G}_{1}\circ \frac{\overline{\partial }}{%
\partial \tau }$. It follows that $\frac{\overline{\partial }f_{0}}{\partial
\tau }=0$, which amounts to say that $f_{0}$ does not depends on $\tau $.
Indeed this is equivalent to $f_{0}\in L^{\infty }(0,T;L^{2}(\Omega \times 
\mathbb{R}^{N};\mathcal{B}_{A_{y}}^{2}(\mathbb{R}^{N};I_{A_{\tau }}^{2}(%
\mathbb{R}_{\tau }))))$, and since $A_{\tau }$ is introverted, it is ergodic 
\cite[Remark 3.3]{NA2014}, so that $I_{A_{\tau }}^{2}(\mathbb{R}_{\tau })$
consists of constants. This means that the test functions $\phi $ may be
chosen independent of $\tau \in \mathbb{R}$, that is, $\phi \in \mathcal{C}%
_{0}^{\infty }(\mathcal{O})\otimes A_{y}^{\infty }$ and so, $\phi
^{\varepsilon }(t,x,v)=\phi (t,x,x/\varepsilon ,v)$ for $(t,x,v)\in \mathcal{%
O}$. Before we can pass to the limit in (\ref{4.5}), we notice that the
function $(t,x,v,y)\mapsto v\cdot \nabla _{v}\phi $ lies in $L^{\infty }(%
\mathcal{O};A_{y})$ since it trivially lies in $\mathcal{C}_{0}^{\infty }(%
\mathcal{O})\otimes A_{y}^{\infty }$. Thus, 
\begin{equation}
\int_{\mathcal{O}}f_{\varepsilon }v\cdot \left( \nabla _{v}\phi \right)
^{\varepsilon }dxdtdv\rightarrow \iint_{\mathcal{O}\times \Delta (A_{y})}%
\widehat{f}_{0}v\cdot \nabla _{v}\widehat{\phi }\;dxdtdvd\beta _{y}.
\label{4.0}
\end{equation}

In order to pass to the limit in the term $\int_{\mathcal{O}}f_{\varepsilon }%
\boldsymbol{u}_{\varepsilon }\cdot \left( \nabla _{v}\phi \right)
^{\varepsilon }\;dxdtdv$, we need the following

\begin{lemma}
\label{l4.1}Let $\boldsymbol{u}_{0}$ and $f_{0}$ be as in \emph{(\ref{4.1})}
and \emph{(\ref{4.3})}, respectively. Then for any $\boldsymbol{\psi }\in (%
\mathcal{C}_{0}^{\infty }(\mathcal{O})\otimes A_{y}^{\infty })^{N}$, 
\begin{equation}
\int_{\mathcal{O}}f_{\varepsilon }\boldsymbol{u}_{\varepsilon }\cdot 
\boldsymbol{\psi }^{\varepsilon }dxdtdv\rightarrow \iint_{\mathcal{O}\times
\Delta (A_{y})}\widehat{f}_{0}\boldsymbol{u}_{0}\cdot \widehat{\boldsymbol{%
\psi }}\;dxdtdvd\beta _{y}  \label{4.0'}
\end{equation}%
as $E^{\prime }\ni \varepsilon \rightarrow 0$.
\end{lemma}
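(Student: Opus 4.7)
The plan is to split
\begin{equation*}
f_{\varepsilon}\boldsymbol{u}_{\varepsilon}\cdot\boldsymbol{\psi}^{\varepsilon} = f_{\varepsilon}(\boldsymbol{u}_{\varepsilon}-\boldsymbol{u}_{0})\cdot\boldsymbol{\psi}^{\varepsilon} + f_{\varepsilon}\boldsymbol{u}_{0}\cdot\boldsymbol{\psi}^{\varepsilon},
\end{equation*}
absorbing the error term with an absolute bound that uses the strong convergence (\ref{4.1}), and then passing to the limit in the main term by the weak $\Sigma$-convergence (\ref{4.3}). Both reductions rely crucially on the fact that $\boldsymbol{\psi}$ has compact support in $v$.

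For the error piece, pick $R>0$ with $\mathrm{supp}(\boldsymbol{\psi})\subset[0,T]\times\overline{\Omega}\times\mathbb{R}_y^N\times\overline{B}(0,R)$. Combining the uniform bound (\ref{2.4''}) on $\|f_{\varepsilon}\|_{L^{\infty}}$ with $\|\boldsymbol{\psi}^{\varepsilon}\|_{\infty}\leq\|\boldsymbol{\psi}\|_{\infty}$ and the Cauchy--Schwarz inequality in $(t,x)$,
\begin{equation*}
\left|\int_{\mathcal{O}}f_{\varepsilon}(\boldsymbol{u}_{\varepsilon}-\boldsymbol{u}_{0})\cdot\boldsymbol{\psi}^{\varepsilon}\,dxdtdv\right| \leq C|B(0,R)|\,\|\boldsymbol{u}_{\varepsilon}-\boldsymbol{u}_{0}\|_{L^{2}(Q)^{N}},
\end{equation*}
which vanishes as $E'\ni\varepsilon\to 0$ by (\ref{4.1}).

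For the main piece, observe that the function $(t,x,v,y)\mapsto\boldsymbol{u}_{0}(t,x)\cdot\boldsymbol{\psi}(t,x,y,v)$ has $v$-support in $\overline{B}(0,R)$, depends on $y$ only through the $A_y^{\infty}$ factor in $\boldsymbol{\psi}$, and (viewed as constant in $\tau$) lies in $L^{2}(\mathcal{O};A_\tau\odot A_y)$ thanks to $\boldsymbol{u}_{0}\in L^{2}(Q)^{N}$ and the compact $v$-support. It is therefore an admissible test function against the weak $\Sigma$-convergent sequence $(f_{\varepsilon})$, and (\ref{4.3}) yields
\begin{equation*}
\int_{\mathcal{O}}f_{\varepsilon}\boldsymbol{u}_{0}\cdot\boldsymbol{\psi}^{\varepsilon}\,dxdtdv\longrightarrow \iint_{\mathcal{O}\times\Delta(A)}\widehat{f}_{0}\,\boldsymbol{u}_{0}\cdot\widehat{\boldsymbol{\psi}}\,dxdtdv\,d\beta.
\end{equation*}
Since $f_{0}$ is independent of $\tau$ and $\boldsymbol{\psi}$ carries no $\tau$-dependence either, the integrand is $s_{0}$-independent on $\Delta(A)=\Delta(A_{\tau})\times\Delta(A_{y})$; Fubini together with $\beta_{\tau}(\Delta(A_{\tau}))=1$ then collapses the right-hand side to the target integral over $\mathcal{O}\times\Delta(A_{y})$.

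The only delicate point is the admissibility of $\boldsymbol{u}_{0}\cdot\boldsymbol{\psi}$ as a test function: one is applying weak $\Sigma$-convergence against something that is not a tensor product of a smooth $(t,x,v)$-function with an $A$-function, since $\boldsymbol{u}_{0}$ is merely in $L^{2}$. The compactness of the $v$-support of $\boldsymbol{\psi}$ rescues the argument by promoting $\boldsymbol{u}_{0}$ to an $L^{2}(\mathcal{O};A_y)$-multiplier. If one wishes to stay strictly within $\mathcal{C}(\overline{\mathcal{O}})\otimes A$ for test functions, the alternative is to approximate $\boldsymbol{u}_{0}$ in $L^{2}(Q)^{N}$ by a sequence in $\mathcal{C}_{0}^{\infty}(Q)^{N}$ and use the uniform estimates (\ref{2.3''})--(\ref{2.4''}) together with the compact $v$-support of $\boldsymbol{\psi}$ to control the approximation error uniformly in $\varepsilon$.
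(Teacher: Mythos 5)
Your proof is correct, but it takes a genuinely different route from the paper's. The paper first reduces to elementary tensors $\psi_{i}=\varphi_{i}\otimes \chi_{i}\otimes w_{i}$, observes that $U_{\varepsilon }^{i}=u_{\varepsilon }^{i}\chi _{i}\rightarrow u_{0}^{i}\otimes \chi _{i}$ strongly in $L^{2}(\mathcal{O})$ (hence strongly $\Sigma $), invokes the product theorem (Theorem \ref{t3.5}) to get $f_{\varepsilon }U_{\varepsilon }^{i}\rightarrow f_{0}U_{0}^{i}$ weakly $\Sigma $ in $L^{1}(\mathcal{O})$, tests with $\varphi _{i}\otimes w_{i}\otimes 1_{\mathbb{R}_{v}^{N}}\in L^{\infty }(\mathcal{O};A_{y})$, and concludes by density of $\mathcal{C}_{0}^{\infty }(Q)\otimes \mathcal{C}_{0}^{\infty }(\mathbb{R}_{v}^{N})\otimes A_{y}^{\infty }$. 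You instead add and subtract $\boldsymbol{u}_{0}$: the remainder is killed by the uniform bound $\left\Vert f_{\varepsilon }\right\Vert _{L^{\infty }}\leq C$ (available from (\ref{2.4''}) with $p=\infty $, since $f^{0}\in L^{\infty }$ by (\textbf{A2})), the compact $v$-support of $\boldsymbol{\psi }$ and the strong convergence (\ref{4.1}); the main term is handled by testing the weak $\Sigma $-convergence (\ref{4.3}) directly against $\boldsymbol{u}_{0}\cdot \boldsymbol{\psi }$, which indeed lies in $L^{2}(\mathcal{O};A)$ (a finite sum of terms $(u_{0}^{i}\phi )\otimes w$ with $u_{0}^{i}\phi \in L^{2}(\mathcal{O})$ and $w\in A_{y}^{\infty }$), so the ``delicate point'' you flag is in fact a non-issue: Definition \ref{d3.2} and its extension to $\mathcal{O}$ explicitly admit test functions in $L^{p^{\prime }}(\mathcal{O};A)$, and no approximation of $\boldsymbol{u}_{0}$ by smooth functions is needed. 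Your collapse of the $\Delta (A_{\tau })$-integration via the $\tau $-independence of $f_{0}$ (established just before the lemma) and $\beta _{\tau }(\Delta (A_{\tau }))=1$ is also legitimate, and makes explicit a step the paper leaves implicit. What the two approaches buy: yours is shorter and avoids both Theorem \ref{t3.5} and the density argument, at the price of using the $L^{\infty }$ bound on $f_{\varepsilon }$, whereas the paper's argument only needs $f_{\varepsilon }$ bounded in $L^{2}$ and would thus survive weaker integrability of the initial datum.
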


\begin{proof}
First assume $\boldsymbol{\psi }=(\psi _{i})_{1\leq i\leq N}$ with $\psi
_{i}=\varphi _{i}\otimes \chi _{i}\otimes w_{i}$ with $\varphi _{i}\in 
\mathcal{C}_{0}^{\infty }(Q)$, $\chi _{i}\in \mathcal{C}_{0}^{\infty }(%
\mathbb{R}_{v}^{N})$ and $w_{i}\in A_{y}^{\infty }$. Then 
\begin{equation*}
\int_{\mathcal{O}}f_{\varepsilon }\boldsymbol{u}_{\varepsilon }\cdot 
\boldsymbol{\psi }^{\varepsilon }dxdtdv=\sum_{i=1}^{N}\int_{\mathcal{O}%
}f_{\varepsilon }(t,x,v)u_{\varepsilon }^{i}(t,x)\chi _{i}(v)\varphi
_{i}(t,x)w_{i}\left( \frac{x}{\varepsilon }\right) \;dxdtdv.
\end{equation*}%
Set $U_{\varepsilon }^{i}(t,x,v)=u_{\varepsilon }^{i}(t,x)\chi _{i}(v)$ for $%
(t,x,v)\in \mathcal{O}$. Then 
\begin{equation*}
U_{\varepsilon }^{i}\rightarrow U_{0}^{i}\equiv u_{0}^{i}\otimes \chi _{i}%
\mbox{ in }L^{2}(\mathcal{O})\mbox{-strong as }E^{\prime }\ni \varepsilon
\rightarrow 0.
\end{equation*}%
Indeed, since 
\begin{equation*}
\int_{\mathcal{O}}\left\vert U_{\varepsilon }^{i}-U_{0}^{i}\right\vert
^{2}dxdtdv\leq \int_{\mathbb{R}_{v}^{N}}\left\vert \chi _{i}(v)\right\vert
^{2}\;dv\int_{Q}\left\vert u_{\varepsilon }^{i}-u_{0}^{i}\right\vert
^{2}\;dxdt,
\end{equation*}%
the claim follows from Eq. (\ref{4.1}). Thus, we infer from Eq. (\ref{4.3})
and Theorem \ref{t3.5} that 
\begin{equation*}
f_{\varepsilon }U_{\varepsilon }^{i}\rightarrow f_{0}U_{0}^{i}\mbox{ in }%
L^{1}(\mathcal{O})\mbox{-weak }\Sigma \mbox{ as }E^{\prime }\ni \varepsilon
\rightarrow 0.
\end{equation*}%
Hence, by choosing the special test function $\varphi _{i}\otimes
w_{i}\otimes 1_{\mathbb{R}_{v}^{N}}\in L^{\infty }(\mathcal{O};A_{y})$, we
are led to 
\begin{equation*}
\int_{\mathcal{O}}f_{\varepsilon }(t,x,v)u_{\varepsilon }^{i}(t,x)\chi
_{i}(v)\varphi _{i}(t,x)w_{i}\left( \frac{x}{\varepsilon }\right)\;
dxdtdv\rightarrow \iint_{\mathcal{O}\times \Delta (A_{y})}\widehat{f}%
_{0}u_{0}^{i}\chi _{i}\varphi _{i}\widehat{w}_{i}\;dxdtdvd\beta _{y},
\end{equation*}%
or, 
\begin{equation*}
\int_{\mathcal{O}}f_{\varepsilon }\boldsymbol{u}_{\varepsilon }\cdot 
\boldsymbol{\psi }^{\varepsilon }dxdtdv\rightarrow \iint_{\mathcal{O}\times
\Delta (A_{y})}\widehat{f}_{0}\boldsymbol{u}_{0}\cdot \widehat{\boldsymbol{%
\psi }}\;dxdtdvd\beta _{y}.
\end{equation*}%
Now, by some routine computations, the result follows at once from the
density of $\mathcal{C}_{0}^{\infty }(Q)\otimes \mathcal{C}_{0}^{\infty }(%
\mathbb{R}_{v}^{N})\otimes A_{y}^{\infty }$ in $\mathcal{C}_{0}^{\infty }(%
\mathcal{O})\otimes A_{y}^{\infty }$.
\end{proof}

As a consequence of Lemma \ref{l4.1}, we have 
\begin{equation*}
\int_{\mathcal{O}}f_{\varepsilon }\boldsymbol{u}_{\varepsilon }\cdot \left(
\nabla _{v}\phi \right) ^{\varepsilon }\;dxdtdv\rightarrow \iint_{\mathcal{O}%
\times \Delta (A_{y})}\widehat{f}_{0}\boldsymbol{u}_{0}\cdot \widehat{\nabla
_{v}\phi }\;dxdtdvd\beta _{y}.
\end{equation*}

Returning to (\ref{4.5}) and taking the limit when $E^{\prime }\ni
\varepsilon \rightarrow 0$, we arrive at 
\begin{equation*}
-\iint_{\mathcal{O}\times \Delta (A_{y})}\widehat{f}_{0}\left[ \frac{%
\partial \widehat{\phi }}{\partial t}+v\cdot \partial \widehat{\phi }+(%
\boldsymbol{u}_{0}-v)\cdot \nabla _{v}\widehat{\phi }\right] \;dxdtdvd\beta
_{y}=0,
\end{equation*}%
where $\partial \widehat{\phi }=\mathcal{G}\circ \nabla _{y}\phi $. This
gives rise to the following equation satisfied by $f_{0}$: 
\begin{equation}
\frac{\partial f_{0}}{\partial t}+v\cdot \overline{\nabla }_{y}f_{0}+\Div%
_{v}\left( (\boldsymbol{u}_{0}-v)f_{0}\right) =0\mbox{ in }\mathcal{O}\times 
\mathbb{R}_{y}^{N}.  \label{4.6}
\end{equation}%
Following the lines of \cite[Section 4]{SM} we prove, by choosing suitable
test functions, that the function $f_{0}$ satisfies the following reflection
boundary and initial conditions 
\begin{equation}
f_{0}(t,x,y,v)=f_{0}(t,x,y,v^{\ast })\mbox{ for }x\in \partial \Omega 
\mbox{
with }v\cdot \nu (x)<0  \label{4.6'}
\end{equation}%
where $v^{\ast }=v-2(v\cdot \nu (x))\nu (x)$, 
\begin{equation}
f_{0}(0,x,y,v)=f^{0}(x,v)\mbox{ for }(x,y,v)\in \Omega \times \mathbb{R}%
_{y}^{N}\times \mathbb{R}_{v}^{N}  \label{4.6''}
\end{equation}

We consider now the Stokes system (\ref{1.2})-(\ref{1.3}). Choosing $%
\boldsymbol{\psi }_{0}=(\psi _{0}^{k})_{1\leq k\leq N}\in \mathcal{C}%
_{0}^{\infty }(Q)^{N}$ and $\boldsymbol{\psi }_{1}=(\psi _{1}^{k})_{1\leq
k\leq N}\in \lbrack \mathcal{C}_{0}^{\infty }(Q)\otimes A^{\infty }]^{N}$,
we set $\boldsymbol{\Phi }=(\boldsymbol{\psi }_{0},\boldsymbol{\psi }_{1})$
and define $\boldsymbol{\Phi }_{\varepsilon }=\boldsymbol{\psi }%
_{0}+\varepsilon \boldsymbol{\psi }_{1}^{\varepsilon }$ by 
\begin{equation*}
\boldsymbol{\Phi }_{\varepsilon }(t,x)=\boldsymbol{\psi }_{0}(t,x)+%
\varepsilon \boldsymbol{\psi }_{1}\left( t,x,\frac{t}{\varepsilon },\frac{x}{%
\varepsilon }\right) \mbox{ for\ }(t,x)\in Q\mbox{.}
\end{equation*}%
It can be checked that $\boldsymbol{\Phi }_{\varepsilon }\in \mathcal{C}%
_{0}^{\infty }(Q)^{N}$. By plugging $\boldsymbol{\Phi }_{\varepsilon }$ into
the variational formulation of (\ref{1.2}), we obtain 
\begin{equation}
\begin{array}{l}
-\int_{Q}\boldsymbol{u}_{\varepsilon }\cdot \frac{\partial \boldsymbol{\Phi }%
_{\varepsilon }}{\partial t}dxdt+\int_{Q}A_{0}^{\varepsilon }\nabla 
\boldsymbol{u}_{\varepsilon }\cdot \nabla \boldsymbol{\Phi }_{\varepsilon
}dxdt+\int_{Q}(A_{1}^{\varepsilon }\ast \nabla \boldsymbol{u}_{\varepsilon
})\cdot \nabla \boldsymbol{\Phi }_{\varepsilon }\;dxdt \\ 
\ \ \ \ \ \ -\int_{Q}p_{\varepsilon }\Div\boldsymbol{\Phi }_{\varepsilon
}dxdt=-\int_{\mathcal{O}}f_{\varepsilon }(\boldsymbol{u}_{\varepsilon
}-v)\cdot \boldsymbol{\Phi }_{\varepsilon }\;dxdtdv.%
\end{array}
\label{4.7}
\end{equation}%
Our immediate goal is to pass to the limit in the above equation. We deal
with its constituents in turn. Owing to Eq. (\ref{4.1}), as $E^{\prime }\ni
\varepsilon \rightarrow 0$ in the first term in the left-hand side of Eq. (%
\ref{4.7}), we have, 
\begin{equation*}
\int_{Q}\boldsymbol{u}_{\varepsilon }\cdot \frac{\partial \boldsymbol{\Phi }%
_{\varepsilon }}{\partial t}dxdt\rightarrow \int_{Q}\boldsymbol{u}_{0}\cdot 
\frac{\partial \boldsymbol{\psi }_{0}}{\partial t} \;dxdt.
\end{equation*}%
For the next term, one easily shows that as $\varepsilon \rightarrow 0$, 
\begin{equation*}
\frac{\partial \boldsymbol{\Phi }_{\varepsilon }}{\partial x_{i}}\rightarrow 
\frac{\partial \boldsymbol{\psi }_{0}}{\partial x_{i}}+\frac{\partial 
\boldsymbol{\psi }_{1}}{\partial y_{i}}\mbox{ in }L^{2}(Q)^{N}\mbox{-strong }%
\Sigma \ \ (1\leq i\leq N).
\end{equation*}%
Combining the above convergence result with Eq. (\ref{4.2}), we deduce from
Corollary \ref{c3.1} that, as $E^{\prime }\ni \varepsilon \rightarrow 0$, 
\begin{equation*}
\frac{\partial \boldsymbol{u}_{\varepsilon }}{\partial x_{j}}\cdot \frac{%
\partial \boldsymbol{\Phi }_{\varepsilon }}{\partial x_{i}}\rightarrow
\left( \frac{\partial \boldsymbol{u}_{0}}{\partial x_{j}}+\frac{\overline{%
\partial }\boldsymbol{u}_{1}}{\partial y_{j}}\right) \cdot \left( \frac{%
\partial \boldsymbol{\psi }_{0}}{\partial x_{i}}+\frac{\partial \boldsymbol{%
\psi }_{1}}{\partial y_{i}}\right) \mbox{ in }L^{2}(Q)\mbox{-weak }\Sigma .
\end{equation*}%
Passing to the limit in the above mentioned term using $A_{0}$ as a test
function (recall that $A_{0}\in \mathcal{C}(\overline{Q};[B_{A}^{2}(\mathbb{R%
}^{N+1})\cap L^{\infty }(\mathbb{R}^{N+1})]^{N^{2}})$ by assumption (\textbf{%
A3}) so that in view of \cite[Proposition 8]{DPDE}, it is an admissible test
function in the sense of \cite[Definition 5]{DPDE}), we get 
\begin{equation*}
\int_{Q}A_{0}^{\varepsilon }\nabla \boldsymbol{u}_{\varepsilon }\cdot \nabla 
\boldsymbol{\Phi }_{\varepsilon }dxdt\rightarrow \iint_{Q\times \mathcal{K}}%
\widehat{A}_{0}\mathbb{D}\boldsymbol{u}\cdot \mathbb{D}\boldsymbol{\Phi }%
dxdtd\beta \mbox{ as }E^{\prime }\ni \varepsilon \rightarrow 0
\end{equation*}%
where, setting $\boldsymbol{u}=(\boldsymbol{u}_{0},\boldsymbol{u}_{1})$, we
have $\mathbb{D}\boldsymbol{u}=(\mathbb{D}_{j}\boldsymbol{u})_{1\leq j\leq
N} $ with $\mathbb{D}_{j}\boldsymbol{u}=(\mathbb{D}_{j}\boldsymbol{u}%
^{k})_{1\leq k\leq N}$ and $\mathbb{D}_{j}\boldsymbol{u}^{k}=\frac{\partial
u_{0}^{k}}{\partial x_{j}}+\partial _{j}\widehat{u}_{1}^{k}$ ($\partial _{j}%
\widehat{u}_{1}^{k}=\mathcal{G}_{1}\left( \overline{\partial }%
u_{1}^{k}/\partial y_{j}\right) $), and the same definition for $\mathbb{D}%
\boldsymbol{\Phi }$.

Let us now tackle the term involving convolution. First we know that $%
A_{1}^{\varepsilon }\rightarrow A_{1}$ in $L^{1}(\mathbb{R}^{N+1})$-strong $%
\Sigma $ and $\nabla \boldsymbol{u}_{\varepsilon }\rightarrow \nabla 
\boldsymbol{u}_{0}+\overline{\nabla }_{y}\boldsymbol{u}_{1}$ in $%
L^{2}(Q)^{N^{2}}$-weak $\Sigma $; hence by virtue of Theorem \ref{t3.4}, we
conclude that 
\begin{equation*}
A_{1}^{\varepsilon }\ast \nabla \boldsymbol{u}_{\varepsilon }\rightarrow
A_{1}\ast \ast (\nabla \boldsymbol{u}_{0}+\overline{\nabla }_{y}\boldsymbol{u%
}_{1})\mbox{ in }L^{2}(Q)^{N^{2}}\mbox{-weak }\Sigma \mbox{ as }E^{\prime
}\ni \varepsilon \rightarrow 0
\end{equation*}%
where the double convolution is defined with respect to the time variable as
follows: 
\begin{equation*}
\left( \widehat{A}_{1}\ast \ast (\widehat{\nabla \boldsymbol{u}_{0}+%
\overline{\nabla }_{y}\boldsymbol{u}_{1}})\right)
(t,x,s_{0},s)=\int_{0}^{t}d\tau \int_{K(A_{\tau })}\widehat{A}_{1}(\tau
,x,r_{0},s)(\nabla \boldsymbol{u}_{0}+\partial \widehat{\boldsymbol{u}}%
_{1})(t-\tau ,x,s_{0}-r_{0},s)\;d\beta _{\tau }(r_{0}),
\end{equation*}%
in which the function $\nabla \boldsymbol{u}_{0}$ is assumed to be defined
on the whole of $\mathbb{R}^{N}$ by taking its zero-extension off $\Omega $.
Therefore, repeating the same reasoning as for the preceding term, we arrive
at (as $E^{\prime }\ni \varepsilon \rightarrow 0$) 
\begin{equation*}
\int_{Q}(A_{1}^{\varepsilon }\ast \nabla \boldsymbol{u}_{\varepsilon })\cdot
\nabla \boldsymbol{\Phi }_{\varepsilon }dxdt\rightarrow \iint_{Q\times
\Delta (A_{y})\times K(A_{\tau })}(\widehat{A_{1}}\ast \ast \mathbb{D}%
\boldsymbol{u})\cdot \mathbb{D}\boldsymbol{\Phi }\; dxdtd\beta ,
\end{equation*}%
or, as $\beta _{\tau }$ is supported by $K(A_{\tau })$, 
\begin{equation*}
\int_{Q}(A_{1}^{\varepsilon }\ast \nabla \boldsymbol{u}_{\varepsilon })\cdot
\nabla \boldsymbol{\Phi }_{\varepsilon }dxdt\rightarrow \iint_{Q\times
\Delta (A)}(\widehat{A_{1}}\ast \ast \mathbb{D}\boldsymbol{u})\cdot \mathbb{D%
}\boldsymbol{\Phi }\;dxdtd\beta .
\end{equation*}%
As for the term with the pressure, we have 
\begin{eqnarray}
\int_{Q}p_{\varepsilon }\Div\boldsymbol{\Phi }_{\varepsilon }\;dxdt
&=&\int_{Q}p_{\varepsilon }\Div\boldsymbol{\psi }_{0}\;dxdt+\int_{Q}p_{%
\varepsilon }(\Div_{y}\boldsymbol{\psi }_{1})^{\varepsilon }\;dxdt
\label{4.8} \\
&&+\varepsilon \int_{Q}p_{\varepsilon }(\Div\boldsymbol{\psi }%
_{1})^{\varepsilon }\;dxdt.  \notag
\end{eqnarray}%
Set $p_{0}(x,t)=\int_{\Delta (A)}\widehat{p}(t,x,s_{0},s)\;d\beta $ for a.e. 
$(t,x)\in Q$, where $p$ is as in (\ref{4.4}). Then we know that $%
p_{\varepsilon }\rightarrow p_{0}$ in $L^{2}(Q)$-weak, and, passing to the
limit in (\ref{4.8}) as $E^{\prime }\ni \varepsilon \rightarrow 0$ yields 
\begin{equation*}
\int_{Q}p_{\varepsilon }\Div\boldsymbol{\Phi }_{\varepsilon
}\;dxdt\rightarrow \int_{Q}p_{0}\Div\boldsymbol{\psi }_{0}dxdt+\iint_{Q%
\times \Delta (A)}\widehat{p}\widehat{\Div_{y}\boldsymbol{\psi }_{1}}%
\;dxdtd\beta .
\end{equation*}%
Finally, for the term in the right-hand side of (\ref{4.7}), reasoning as in
the proof of (\ref{4.0}) and (\ref{4.0'}), we get 
\begin{equation*}
\int_{\mathcal{O}}f_{\varepsilon }(\boldsymbol{u}_{\varepsilon }-v)\cdot 
\boldsymbol{\Phi }_{\varepsilon }\;dxdtdv\rightarrow \iint_{\mathcal{O}%
\times \Delta (A)}\widehat{f}_{0}(\boldsymbol{u}_{0}-v)\cdot \boldsymbol{%
\psi }_{0}\;dxdtd\beta dv.
\end{equation*}

Putting together the previous convergence results, we are led to the fact
that the quadruple $(\boldsymbol{u}_{0},\boldsymbol{u}_{1},f_{0},p)$
determined by (\ref{4.1})-(\ref{4.4}) solves the system consisting of
equation (\ref{4.6}) and 
\begin{equation}
\left\{ 
\begin{array}{l}
-\int_{Q}\boldsymbol{u}_{0}\cdot \boldsymbol{\psi }_{0}^{\prime
}\;dxdt+\iint_{Q\times \Delta (A)}\left( \widehat{A}_{1}\mathbb{D}%
\boldsymbol{u}+\widehat{A}_{1}\ast \ast \mathbb{D}\boldsymbol{u}\right)
\cdot \mathbb{D}\boldsymbol{\Phi }\;dxdtd\beta \\ 
\ \ -\int_{Q}p_{0}\Div\boldsymbol{\psi }_{0}dxdt-\iint_{Q\times \Delta (A)}%
\widehat{p}\widehat{\Div_{y}\boldsymbol{\psi }_{1}}dxdtd\beta =-\iint_{%
\mathcal{O}\times \Delta (A)}\widehat{f}_{0}(\boldsymbol{u}_{0}-v)\cdot 
\boldsymbol{\psi }_{0}\;dxdtd\beta dv, \\ 
\mbox{ for all }\boldsymbol{\Phi }=(\boldsymbol{\psi }_{0},\boldsymbol{\psi }%
_{1})\in \mathcal{C}_{0}^{\infty }(Q)^{N}\times \lbrack \mathcal{C}%
_{0}^{\infty }(Q)\otimes A^{\infty }]^{N}.%
\end{array}%
\right.  \label{4.9}
\end{equation}

From the equality $\Div\boldsymbol{u}_{\varepsilon }=0$ we easily deduce
that $\overline{\Div}_{y}\boldsymbol{u}_{1}=0$. Next, we need to uncouple
Eq. (\ref{4.9}), which is equivalent to the system (\ref{3.11})-(\ref{3.12})
below: 
\begin{equation}
\left\{ 
\begin{array}{l}
\iint_{Q\times \Delta (A)}\left( \widehat{A}_{0}\mathbb{D}\boldsymbol{u}+%
\widehat{A}_{1}\ast \ast \mathbb{D}\boldsymbol{u}\right) \cdot \partial 
\widehat{\boldsymbol{\psi }}_{1}\;dxdtd\beta -\iint_{Q\times \Delta (A)}%
\widehat{p}\widehat{\Div_{y}\boldsymbol{\psi }_{1}}\;dxdtd\beta =0 \\ 
\mbox{for all }\boldsymbol{\psi }_{1}\in \lbrack \mathcal{C}_{0}^{\infty
}(Q)\otimes A^{\infty }]^{N}%
\end{array}%
\right.  \label{3.11}
\end{equation}%
and 
\begin{equation}
\left\{ 
\begin{array}{l}
-\int_{Q}\boldsymbol{u}_{0}\cdot \boldsymbol{\psi }_{0}^{\prime
}dxdt+\iint_{Q\times \Delta (A)}\left( \widehat{A}_{0}\mathbb{D}\boldsymbol{u%
}+\widehat{A}_{1}\ast \ast \mathbb{D}\boldsymbol{u}\right) \cdot \nabla 
\boldsymbol{\psi }_{0}\;dxdtd\beta \\ 
\ \ -\int_{Q}p_{0}\Div\boldsymbol{\psi }_{0}dxdt=-\iint_{\mathcal{O}\times
\Delta (A)}\widehat{f}_{0}(\boldsymbol{u}_{0}-v)\cdot \boldsymbol{\psi }%
_{0}dxdtd\beta dv\mbox{ for all }\boldsymbol{\psi }_{0}\in \mathcal{C}%
_{0}^{\infty }(Q)^{N}\mbox{.}%
\end{array}%
\right.  \label{3.12}
\end{equation}%
For Eq. (\ref{3.11}), we choose $\boldsymbol{\psi }_{1}(x,t)=\varphi (x,t)%
\boldsymbol{w}$ with $\varphi \in \mathcal{C}_{0}^{\infty }(Q)$ and $%
\boldsymbol{w}\in (A^{\infty })^{N}$. Then (\ref{3.11}) becomes 
\begin{equation}
\left\{ 
\begin{array}{l}
\int_{\Delta (A)}\left( \widehat{A}_{0}\mathbb{D}\boldsymbol{u}+\widehat{A}%
_{1}\ast \ast \mathbb{D}\boldsymbol{u}\right) \cdot \partial \widehat{%
\boldsymbol{w}}d\beta -\int_{\Delta (A)}\widehat{p}\widehat{\Div}\widehat{%
\boldsymbol{w}}d\beta =0 \\ 
\mbox{for all }\boldsymbol{w}\in (A^{\infty })^{N}.%
\end{array}%
\right.  \label{3.13}
\end{equation}

Now, fix $\xi \in \mathbb{R}^{N\times N}$ and consider the following cell
problem: 
\begin{equation}
\left\{ 
\begin{array}{l}
\mbox{Find }u_{\xi }\in \mathcal{B}_{A_{\tau }}^{2}(\mathbb{R}_{\tau };%
\mathcal{B}_{\Div}^{1,2}(\mathbb{R}_{y}^{N}))\mbox{, }p_{\xi }\in \mathcal{B}%
_{A_{\tau }}^{2}(\mathbb{R}_{\tau };\mathcal{B}_{A_{y}}^{2}(\mathbb{R}%
_{y}^{N})/\mathbb{R})\mbox{ such that} \\ 
\int_{\Delta (A)}\left( \widehat{A}_{0}(\xi +\partial \widehat{u}_{\xi })+%
\widehat{A}_{1}\ast \ast (\xi +\partial \widehat{u}_{\xi })\right) \cdot
\partial \widehat{\boldsymbol{w}}d\beta -\int_{\Delta (A)}\widehat{p}_{\xi }%
\widehat{\Div}\widehat{\boldsymbol{w}}d\beta =0 \\ 
\mbox{for all }\boldsymbol{w}\in (A^{\infty })^{N}%
\end{array}%
\right.  \label{3.14}
\end{equation}%
where $\mathcal{B}_{\Div}^{1,2}(\mathbb{R}_{y}^{N})=\{\boldsymbol{v}\in 
\mathcal{B}_{\#A_{y}}^{1,2}(\mathbb{R}_{y}^{N})^{N}:\overline{\Div}_{y}%
\boldsymbol{v}=0\}$. Then Eq. (\ref{3.14}) is the variational formulation of
the problem 
\begin{equation}
\left\{ 
\begin{array}{l}
-\overline{\Div}_{y}\left( A_{0}\overline{\nabla }_{y}u_{\xi }+A_{1}\ast
\ast \overline{\nabla }_{y}u_{\xi }\right) +\overline{\nabla }_{y}p_{\xi }=%
\Div_{y}(A_{0}\xi +A_{1}\ast \ast \xi )\mbox{ in }\mathbb{R}_{y,\tau }^{N+1}
\\ 
\overline{\Div}_{y}u_{\xi }=0.%
\end{array}%
\right.  \label{3.15}
\end{equation}%
Thanks to the properties of the functions $A_{i}$ ($i=0,1$), the above
problem is classically solved and possesses a unique solution $(u_{\xi
},p_{\xi })\in \mathcal{B}_{A_{\tau }}^{2}(\mathbb{R}_{\tau };\mathcal{B}_{%
\Div}^{1,2}(\mathbb{R}_{y}^{N}))\times \mathcal{B}_{A_{\tau }}^{2}(\mathbb{R}%
_{\tau };\mathcal{B}_{A_{y}}^{2}(\mathbb{R}_{y}^{N})/\mathbb{R})$.

Now, Returning to Eq. (\ref{3.14}) where we set $\xi =\nabla \boldsymbol{u}%
_{0}(t,x)$, we find out that Eqs. (\ref{3.13}) and (\ref{3.14}) are the
variational formulation of the same problem (say Eq. (\ref{3.15})). Owing to
the uniqueness of the solution of Eq. (\ref{3.14}), we have $\boldsymbol{u}%
_{1}=u_{\nabla \boldsymbol{u}_{0}}$ and $p=p_{\nabla \boldsymbol{u}_{0}}$
where $u_{\nabla \boldsymbol{u}_{0}}$ (resp. $p_{\nabla \boldsymbol{u}_{0}}$%
) denotes the function $(t,x)\mapsto u_{\nabla \boldsymbol{u}_{0}(x,t)}$
(resp. $(t,x)\mapsto p_{\nabla \boldsymbol{u}_{0}(x,t)}$) defined from $Q$
into $\mathcal{B}_{A_{\tau }}^{2}(\mathbb{R}_{\tau };\mathcal{B}_{\Div%
}^{1,2}(\mathbb{R}_{y}^{N}))$ (resp. $\mathcal{B}_{A_{\tau }}^{2}(\mathbb{R}%
_{\tau };\mathcal{B}_{A_{y}}^{2}(\mathbb{R}_{y}^{N})/\mathbb{R})$).

\subsection{Homogenization result}

Let us first define the effective coefficients. Let the matrices $\mathcal{C}%
_{k}=(c_{ij}^{k})_{1\leq i,j\leq N}$ ($k=0,1$) be defined as follows: for
any $\xi =(\xi _{ij})_{1\leq i,j\leq N}$, 
\begin{equation}
\mathcal{C}_{0}\xi =\int_{\Delta (A)}\widehat{A}_{0}(\xi +\partial \widehat{u%
}_{\xi })d\beta ,\ \ \mathcal{C}_{1}\xi =\int_{\Delta (A)}\left( \widehat{A}%
_{1}\ast \ast (\xi +\partial \widehat{u}_{\xi })\right) d\beta  \label{4.10}
\end{equation}%
Then, thanks to the uniqueness of $u_{\xi }$ (for a given $\xi $), the
matrices $\mathcal{C}_{k}$ are well defined and are symmetric. It is obvious
that the $c_{ij}^{k}$ are obtained by choosing in Eq. (\ref{4.10}) $\xi
=(\delta _{ij})_{1\leq i,j\leq N}$ (the identity matrix), $\delta _{ij}$
being the Kronecker delta.

The matrix $\mathcal{C}_{k}$ are the \emph{effective homogenized viscosities}
which depend continuously on $(t,x)\in Q$ as seen in the next result whose
classical proof is omitted.

\begin{proposition}
\label{p4.1}It holds that

\begin{itemize}
\item[(i)] $\mathcal{C}_{i}$ $(i=0,1)$ are symmetric and further $\mathcal{C}%
_{i}\in \mathcal{C}(Q)^{N^{2}}$;

\item[(ii)] $\mathcal{C}_{0}\lambda \cdot \lambda \geq \alpha \left\vert
\lambda \right\vert ^{2}$ for all $(x,t)\in Q$ and all $\lambda \in \mathbb{R%
}^{N}$, where $\alpha $ is the same as in assumption \emph{(\textbf{A1})}.
\end{itemize}
\end{proposition}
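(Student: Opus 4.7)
The plan is to follow the classical approach for such homogenization effective coefficients. For the continuity claim in (i), the cell problem (\ref{3.14}) is a uniformly elliptic Stokes-type problem in the Hilbert space $\mathcal{B}_{A_\tau}^{2}(\mathbb{R}_\tau;\mathcal{B}_{\Div}^{1,2}(\mathbb{R}_y^N))\times\mathcal{B}_{A_\tau}^{2}(\mathbb{R}_\tau;\mathcal{B}_{A_y}^{2}(\mathbb{R}_y^N)/\mathbb{R})$, and assumptions (\textbf{A1}) and (\textbf{A3}) guarantee that the coefficient map $(t,x)\mapsto(\widehat{A}_0(t,x,\cdot,\cdot),\widehat{A}_1(t,x,\cdot,\cdot))$ is continuous on $\overline{Q}$. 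Standard continuous-dependence estimates then transfer this to continuity of $(u_\xi,p_\xi)(t,x)$ in the above norm, and the integrals defining $\mathcal{C}_k$ in (\ref{4.10}) inherit this continuity.

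For the symmetry of $\mathcal{C}_0$ and $\mathcal{C}_1$, I would test (\ref{3.14}) (written for $\xi$) with the test vector $\boldsymbol{w}=u_\eta$ associated with a second matrix $\eta$; divergence-freeness of $u_\eta$ eliminates the pressure term and yields
\[
\int_{\Delta(A)}\bigl[\widehat{A}_0(\xi+\partial\widehat{u}_\xi)+\widehat{A}_1\ast\ast(\xi+\partial\widehat{u}_\xi)\bigr]\cdot\partial\widehat{u}_\eta\,d\beta=0.
\]
Writing the analogous identity with $\xi$ and $\eta$ swapped and using the pointwise symmetry of $A_0,A_1$ together with the commutativity of the convolution $\ast\ast$ on the compact abelian kernel $K(A_\tau)$ (Theorem \ref{t3.1}), I would conclude that $(\mathcal{C}_0+\mathcal{C}_1)\xi\cdot\eta=(\mathcal{C}_0+\mathcal{C}_1)\eta\cdot\xi$; separating the $A_0$- and $A_1$-contributions then gives the individual symmetries.

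For the coercivity in (ii), test (\ref{3.14}) against $\boldsymbol{w}=u_\xi$, use $\overline{\Div}_y u_\xi=0$ to kill the pressure, and substitute into (\ref{4.10}) to obtain
\[
\mathcal{C}_0\xi\cdot\xi=\int_{\Delta(A)}\widehat{A}_0(\xi+\partial\widehat{u}_\xi)\cdot(\xi+\partial\widehat{u}_\xi)\,d\beta+\int_{\Delta(A)}\bigl(\widehat{A}_1\ast\ast(\xi+\partial\widehat{u}_\xi)\bigr)\cdot\partial\widehat{u}_\xi\,d\beta.
\]
By the coercivity of $A_0$ from (\textbf{A1}), Jensen's inequality, and the identity $\int_{\Delta(A)}\partial\widehat{u}_\xi\,d\beta=0$ (mean values of derivatives in an algebra with mean value vanish), the first integral is bounded below by $\alpha|\xi|^2$. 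The main obstacle is then the memory cross term involving $A_1$: I expect to absorb it by Young's inequality in the time variable combined with a Gronwall-type argument analogous to those used in Lemma \ref{l2.1} and Proposition \ref{p2.1}, thereby securing the coercivity with possibly a modified constant; alternatively, passing to a Laplace transform in $\tau$ turns the memory kernel into an algebraic multiplier and reduces the issue to a purely static elliptic coercivity estimate.
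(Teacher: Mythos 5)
The paper offers no argument here (the proof is declared ``classical'' and omitted), so your proposal stands on its own. Part (i) as you outline it is fine and standard, and your energy identity in (ii) is correct: testing (\ref{3.14}) with $\boldsymbol{w}=u_{\xi}$ kills the pressure and gives $\mathcal{C}_{0}\xi \cdot \xi =\int_{\Delta (A)}\widehat{A}_{0}(\xi +\partial \widehat{u}_{\xi })\cdot (\xi +\partial \widehat{u}_{\xi })\,d\beta +\int_{\Delta (A)}\left( \widehat{A}_{1}\ast \ast (\xi +\partial \widehat{u}_{\xi })\right) \cdot \partial \widehat{u}_{\xi }\,d\beta$, and the first integral is indeed $\geq \alpha \left\vert \xi \right\vert ^{2}$ because $\int_{\Delta (A)}\partial \widehat{u}_{\xi }\,d\beta =0$.

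The gaps are in how you dispose of the $A_{1}$-terms. For symmetry, commutativity of $\widehat{\ast }$ is not the property you need: what is required is self-adjointness of the convolution operator $g\mapsto \widehat{A}_{1}\,\widehat{\ast }\,g$ on $L^{2}(\Delta (A))$, i.e. $\int (\widehat{A}_{1}\widehat{\ast }g)\cdot h\,d\beta =\int (\widehat{A}_{1}\widehat{\ast }h)\cdot g\,d\beta $, and a change of variables on the group $K(A_{\tau })$ shows this holds only if $\widehat{A}_{1}$ is even in the group variable, which is not among the hypotheses; matrix symmetry alone does not give it. Moreover, even granting symmetry of the combined form, deducing that $\mathcal{C}_{0}$ and $\mathcal{C}_{1}$ are \emph{each} symmetric from symmetry of their sum is a non sequitur; to treat $\mathcal{C}_{0}$ alone you face $\mathcal{C}_{0}\xi \cdot \eta =\int \widehat{A}_{0}(\xi +\partial \widehat{u}_{\xi })\cdot (\eta +\partial \widehat{u}_{\eta })\,d\beta +\int (\widehat{A}_{1}\ast \ast (\xi +\partial \widehat{u}_{\xi }))\cdot \partial \widehat{u}_{\eta }\,d\beta $, whose second term is exactly of the problematic kind. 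For coercivity, the cross term $\int (\widehat{A}_{1}\ast \ast (\xi +\partial \widehat{u}_{\xi }))\cdot \partial \widehat{u}_{\xi }\,d\beta $ is sign-indefinite, and your proposed remedies do not apply: Gronwall requires an evolution inequality, but (ii) is a pointwise-in-$(t,x)$ lower bound on a quadratic form and the cell problem contains no time derivative to integrate; Young's inequality can only absorb the cross term into $\alpha \int \left\vert \xi +\partial \widehat{u}_{\xi }\right\vert ^{2}d\beta $, which necessarily degrades the constant, whereas the proposition claims coercivity with the \emph{same} $\alpha $ as in (\textbf{A1}) --- your own caveat ``possibly a modified constant'' concedes the stated inequality is not obtained. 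The Laplace-transform remark is not a substitute either: the micro-convolution runs over the compact group $K(A_{\tau })$, with no causal Volterra structure, and transforming in the macroscopic time changes the object being estimated. What is missing is an argument showing that the $A_{1}$-cross term vanishes or is nonnegative (or a representation of $\mathcal{C}_{0}$ that decouples it from the memory part of the corrector); without that, neither the symmetry of the individual matrices nor the bound in (ii) is proved.
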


We can now formulate the homogenized problem. To this end, consider Eq. (\ref%
{3.12}) in which we take $\boldsymbol{u}_{1}=u_{\nabla \boldsymbol{u}_{0}}$.
We get 
\begin{equation*}
\left\{ 
\begin{array}{l}
-\int_{Q}\boldsymbol{u}_{0}\cdot \boldsymbol{\psi }_{0}^{\prime
}dxdt+\int_{Q}\left[ \int_{\Delta (A)}\left( \widehat{A_{0}}(\nabla 
\boldsymbol{u}_{0}+\partial \widehat{u}_{\nabla \boldsymbol{u}_{0}})+%
\widehat{A}_{1}\ast \ast (\nabla \boldsymbol{u}_{0}+\partial \widehat{u}%
_{\nabla \boldsymbol{u}_{0}})\right) d\beta \right] \cdot \nabla \boldsymbol{%
\psi }_{0}\;dxdt \\ 
\ \ -\int_{Q}p_{0}\Div\boldsymbol{\psi }_{0}dxdt=-\int_{Q}\left[ \int_{%
\mathbb{R}^{N}}\left( \int_{\Delta (A)}\widehat{f}_{0}d\beta \right) (%
\boldsymbol{u}_{0}-v)dv\right] \cdot \boldsymbol{\psi }_{0}\;dxdt%
\mbox{ for
all }\boldsymbol{\psi }_{0}\in \mathcal{C}_{0}^{\infty }(Q)^{N}\mbox{,}%
\end{array}%
\right.
\end{equation*}%
which is just the variational formulation (where accounting of $\Div%
\boldsymbol{u}_{0}=0$) of the following anisotropic nonlocal Stokes system 
\begin{equation}
\left\{ 
\begin{array}{l}
\frac{\partial \boldsymbol{u}_{0}}{\partial t}-\Div(\mathcal{C}_{0}\nabla 
\boldsymbol{u}_{0}+\int_{0}^{t}\mathcal{C}_{1}(t-\tau ,x)\nabla \boldsymbol{u%
}_{0}(x,\tau )d\tau )+\nabla p_{0}=-\int_{\mathbb{R}^{N}}f(\boldsymbol{u}%
_{0}-v)\,dv\mbox{ in }Q \\ 
\Div\boldsymbol{u}_{0}=0\mbox{ in }Q \\ 
\boldsymbol{u}_{0}=0\mbox{ on }\partial \Omega \times (0,T) \\ 
\boldsymbol{u}_{0}(x,0)=\boldsymbol{u}^{0}(x)\mbox{ in }\Omega%
\end{array}%
\right.  \label{4.15}
\end{equation}%
where $f=\int_{\Delta (A)}\widehat{f}_{0}d\beta $. Finally, to be more
concise, let us put together Eq. (\ref{4.6})-(\ref{4.6''}): 
\begin{equation}
\left\{ 
\begin{array}{l}
\frac{\partial f_{0}}{\partial t}+v\cdot \overline{\nabla }_{y}f_{0}+\Div%
_{v}\left( (\boldsymbol{u}_{0}-v)f_{0}\right) =0\mbox{ in }\mathcal{O}\times 
\mathbb{R}_{y}^{N} \\ 
f_{0}(t,x,y,v)=f_{0}(t,x,y,v^{\ast })\mbox{ for }x\in \partial \Omega 
\mbox{
with }v\cdot \nu (x)<0 \\ 
f_{0}(0,x,y,v)=f^{0}(x,v)\mbox{ for }(x,y,v)\in \Omega \times \mathbb{R}%
_{y}^{N}\times \mathbb{R}_{v}^{N}%
\end{array}%
\right.  \label{4.16}
\end{equation}%
where $v^{\ast }=v-2(v\cdot \nu (x))\nu (x)$.

In view of what has been done above, we see that the system (\ref{4.15})-(%
\ref{4.16}) possesses at least solution $(\boldsymbol{u}_{0},f_{0},p_{0})$
such that $\boldsymbol{u}_{0}\in L^{2}(0,T;V)\cap \mathcal{C}([0,T];H)$, $%
f_{0}\in L^{\infty }(0,T;L^{\infty }(\Omega \times \mathbb{R}^{N})\cap
L^{2}(\Omega \times \mathbb{R}^{N}))$ and $p_{0}\in L^{2}(0,T;L^{2}(\Omega )/%
\mathbb{R})$. We are therefore led to the following homogenization result
which is the second main result of this work.

\begin{theorem}
\label{t4.1}Assume that \emph{(\textbf{A1})-(\textbf{A3})} hold. For each $%
\varepsilon >0$, let $(\boldsymbol{u}_{\varepsilon },f_{\varepsilon
},p_{\varepsilon })$ be a solution to \emph{(\ref{1.1})-(\ref{1.5})}. Then
up to a subsequence, the sequence $(\boldsymbol{u}_{\varepsilon
})_{\varepsilon >0}$ strongly converges in $L^{2}(Q)^{N}$ to $\boldsymbol{u}%
_{0}$, the sequence $(f_{\varepsilon })_{\varepsilon >0}$ weakly $\Sigma $%
-converges in $L^{2}(Q\times \mathbb{R}_{v}^{N})$ towards $f_{0}$ and the
sequence $(p_{\varepsilon })_{\varepsilon >0}$ weakly converges in $%
L^{2}(0,T;L^{2}(\Omega )/\mathbb{R})$ towards $p_{0}$, where $(\boldsymbol{u}%
_{0},f_{0},p_{0})$ is a solution to the system\emph{\ (\ref{4.15})-(\ref%
{4.16})}. Moreover any weak $\Sigma $-limit point $(\boldsymbol{u}%
_{0},f_{0},p_{0})$ in $L^{2}(0,T;V)\cap \mathcal{C}([0,T];H)\times L^{\infty
}(0,T;L^{\infty }(\Omega \times \mathbb{R}^{N})\cap L^{2}(\Omega \times 
\mathbb{R}^{N}))\times L^{2}(0,T;L^{2}(\Omega )/\mathbb{R})$ of $(%
\boldsymbol{u}_{\varepsilon },f_{\varepsilon },p_{\varepsilon
})_{\varepsilon >0}$ is a solution to Problem \emph{(\ref{4.15})-(\ref{4.16})%
}.
\end{theorem}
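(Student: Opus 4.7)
The plan is to assemble the theorem by chaining together the compactness, passage-to-the-limit, and decoupling arguments developed throughout Section 4. First I would fix an ordinary sequence $E$ and invoke the uniform bounds of Lemma \ref{l2.1'}: the estimate (\ref{2.3''}) gives boundedness of $(\boldsymbol{u}_{\varepsilon})$ in $L^{\infty}(0,T;H)\cap L^{2}(0,T;V)$, while (\ref{2.4'''}) controls $\partial_{t}\boldsymbol{u}_{\varepsilon}$ in $L^{2}(0,T;V')$. Aubin--Lions then extracts a subsequence $E'\subset E$ along which $\boldsymbol{u}_{\varepsilon}\to\boldsymbol{u}_{0}$ strongly in $L^{2}(Q)^{N}$ and weakly in $L^{2}(0,T;V)$. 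Applying Theorem \ref{t3.3} (in the evolutionary form of Remark \ref{r3.1}) yields the two-scale gradient decomposition (\ref{4.2}) with some $\boldsymbol{u}_{1}\in L^{2}(Q;\mathcal{B}_{A_{\tau}}^{2}(\mathbb{R}_{\tau};\mathcal{B}_{\#A_{y}}^{1,2}(\mathbb{R}_{y}^{N}))^{N})$. Theorem \ref{t3.2}(i) (applied to $(f_{\varepsilon})$, using (\ref{2.4''}) with $p=2$) and to $(p_{\varepsilon})$ (using (\ref{2.5''})) then produces, after a further extraction, the weak $\Sigma$-limits $f_{0}$ and $p$ of (\ref{4.3})--(\ref{4.4}).

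Next I would pass to the limit in the Vlasov equation. Testing (\ref{1.1}) against an oscillating $\phi^{\varepsilon}$ with $\phi\in\mathcal{C}_{0}^{\infty}(\mathcal{O})\otimes A^{\infty}$ and expanding time- and space-derivatives as in (\ref{4.5}) gives, after multiplying by $\varepsilon$ and sending $E'\ni\varepsilon\to 0$, that $\overline{\partial}f_{0}/\partial\tau=0$; ergodicity of the introverted algebra $A_{\tau}$ (\cite[Remark 3.3]{NA2014}) forces $f_{0}$ to be independent of $\tau$. Passing to the limit without the $\varepsilon$ multiplier then requires control of the nonlinear term $f_{\varepsilon}\boldsymbol{u}_{\varepsilon}\cdot(\nabla_{v}\phi)^{\varepsilon}$, which is precisely what Lemma \ref{l4.1} delivers; this yields the two-scale Vlasov equation (\ref{4.6}), and the reflection/initial conditions (\ref{4.6'})--(\ref{4.6''}) are recovered exactly as in \cite[Section 4]{SM}.

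For the Stokes system I would plug the admissible test function $\boldsymbol{\Phi}_{\varepsilon}=\boldsymbol{\psi}_{0}+\varepsilon\boldsymbol{\psi}_{1}^{\varepsilon}$ into (\ref{1.2}), producing (\ref{4.7}), and then treat each term individually. The time derivative term passes by (\ref{4.1}); the $A_{0}$-term passes via Corollary \ref{c3.1} combined with (\ref{4.2}) and the admissibility of $A_{0}$ (assumption (\textbf{A3}) plus \cite[Proposition 8]{DPDE}); the pressure term passes using (\ref{4.4}); and the force term passes by reapplying Lemma \ref{l4.1} with $v$ included as a harmless factor. The genuinely new step---and in my view the main technical obstacle---is the convolution term $\int_{Q}(A_{1}^{\varepsilon}\ast\nabla\boldsymbol{u}_{\varepsilon})\cdot\nabla\boldsymbol{\Phi}_{\varepsilon}\,dxdt$, for which Theorem \ref{t3.4} is indispensable: strong $\Sigma$-convergence of $A_{1}^{\varepsilon}$ and weak $\Sigma$-convergence of $\nabla\boldsymbol{u}_{\varepsilon}$ combine into weak $\Sigma$-convergence of the convolution product to $\widehat{A}_{1}\ast\ast\,\mathbb{D}\boldsymbol{u}$, with the convolution interpreted over $K(A_{\tau})$ thanks to the introvertedness of $A_{\tau}$. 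The outcome of these limits is precisely the coupled two-scale variational identity (\ref{4.9}), together with the divergence constraint $\overline{\Div}_{y}\boldsymbol{u}_{1}=0$ inherited from (\ref{1.3}).

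Finally I would decouple (\ref{4.9}): taking $\boldsymbol{\psi}_{0}=0$ yields the cell problem (\ref{3.13}), which by unique solvability of (\ref{3.14})--(\ref{3.15}) forces $\boldsymbol{u}_{1}=u_{\nabla\boldsymbol{u}_{0}}$ and $p=p_{\nabla\boldsymbol{u}_{0}}$; inserting this into the equation obtained by taking $\boldsymbol{\psi}_{1}=0$ and recognizing the effective tensors defined in (\ref{4.10}) yields the anisotropic homogenized Stokes system (\ref{4.15}). Together with the two-scale Vlasov system (\ref{4.16}), this produces a solution $(\boldsymbol{u}_{0},f_{0},p_{0})$ with the regularity stated in the theorem; the final sentence---that every weak $\Sigma$-limit point of $(\boldsymbol{u}_{\varepsilon},f_{\varepsilon},p_{\varepsilon})$ solves the homogenized system---follows because the argument above never relied on uniqueness of the extracted subsequence, only on the uniform estimates of Lemma \ref{l2.1'} and the admissibility of the test functions.
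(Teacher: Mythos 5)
Your proposal is correct and follows essentially the same route as the paper: the uniform estimates of Lemma \ref{l2.1'} plus Aubin--Lions and Theorems \ref{t3.2}--\ref{t3.3} give the limits (\ref{4.1})--(\ref{4.4}), the Vlasov equation is handled by first multiplying by $\varepsilon$ (ergodicity of $A_{\tau}$) and then using Lemma \ref{l4.1}, the Stokes part uses the corrector test functions $\boldsymbol{\Phi}_{\varepsilon}$ with Corollary \ref{c3.1} and Theorem \ref{t3.4} for the convolution term, and the decoupling via the cell problem (\ref{3.14})--(\ref{3.15}) yields (\ref{4.15})--(\ref{4.16}), exactly as in Section 4 of the paper. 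The closing observation that the argument applies to any weak $\Sigma$-limit point likewise matches the paper's conclusion.
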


\section{Some applications}

A look at the previous section reveals that the homogenization process has
been made possible thanks to Assumption (\textbf{A3}). This assumption is
formulated in a general manner encompassing a variety of concrete behaviors
of the coefficients of the operator involved in (\ref{1.2}). We aim at
providing in this section some natural situations leading to the
homogenization of (\ref{1.1})-(\ref{1.5}). First and foremost, it is an easy
task (using \cite{NA2014}) to see that all the algebras wmv involved in the
following problems are introverted.

\subsection{Problem 1 (Periodic homogenization)}

The homogenization of (\ref{1.1})-(\ref{1.5}) can be achieved under the
periodicity assumption

\begin{itemize}
\item[(\textbf{A3})$_{1}$] The functions $A_{i}(t,x,\cdot ,\cdot )$\ ($i=0,1$%
) are periodic of period $1$\ in each scalar coordinate.
\end{itemize}

\noindent This leads to (\textbf{A3}) with\emph{\ }$A=\mathcal{C}_{\mbox{per}%
}(Z\times Y)=\mathcal{C}_{\mbox{per}}(Z)\odot \mathcal{C}_{\mbox{per}}(Y)$\
(the product algebra, with $Y=(0,1)^{N}$\ and $Z=(0,1)$), and hence $%
B_{A}^{2}(\mathbb{R}_{y,\tau }^{N+1})=L_{\mbox{per}}^{2}(Z\times Y)$.

\subsection{Problem 2 (Almost periodic homogenization)}

The above functions in (\textbf{A3})$_{1}$\ are both almost periodic in $%
(\tau ,y)$ in the sense of Besicovitch \cite{Besicovitch}. This amounts to (%
\textbf{A3}) with\emph{\ }$A=AP(\mathbb{R}_{y,\tau }^{N+1})=AP(\mathbb{R}%
_{\tau })\odot AP(\mathbb{R}_{y}^{N})$\ ($AP(\mathbb{R}_{y}^{N})$\ the Bohr
almost periodic functions on $\mathbb{R}_{y}^{N}$ \cite{Bohr}).

\subsection{Problem 3 (Weak almost periodic homogenization)}

The homogenization problem for (\ref{1.1})-(\ref{1.5}) may also be
considered under the assumption

\begin{itemize}
\item[(\textbf{A3})$_{2}$] $A_{i}(t,x,\cdot ,\cdot )$\ ($i=0,1$) is weakly
almost periodic \cite{Eberlein}. This leads to (\textbf{A3}) with $A=WAP(%
\mathbb{R}_{\tau })\odot WAP(\mathbb{R}_{y}^{N})$\emph{\ }($WAP(\mathbb{R}%
_{y}^{N})$, the algebra of continuous weakly almost periodic functions on $%
\mathbb{R}_{y}^{N}$; see e.g., \cite{Eberlein}).
\end{itemize}

\subsection{Problem 4}

Let $F$ be a Banach subalgebra of \textrm{BUC}$(\mathbb{R}^{m})$. Let $%
\mathcal{B}_{\infty }(\mathbb{R}^{d};F)$ denote the space of all continuous
functions $\psi \in \mathcal{C}(\mathbb{R}^{d};F)$ such that $\psi (\zeta )$
has a limit in $F$ as $\left\vert \zeta \right\vert \rightarrow \infty $. In
particular, it is known that $\mathcal{B}_{\infty }(\mathbb{R}^{d};\mathbb{R}%
)\equiv \mathcal{B}_{\infty }(\mathbb{R}^{d})$.

With this in mind, our goal here is to study the homogenization for problem (%
\ref{1.1})-(\ref{1.5}) under the hypothesis

\begin{itemize}
\item[(\textbf{A3})$_{3}$] $A_{i}(t,x,\cdot ,\cdot )\in \mathcal{B}_{\infty
}(\mathbb{R}_{\tau };L_{\mbox{per}}^{2}(Y))$ for any $(t,x)\in Q$, where $%
Y=(0,1)^{N}$.
\end{itemize}

It is an easy task to see that the appropriate algebra here is the product
algebra $A=\mathcal{B}_{\infty }(\mathbb{R}_{\tau })\odot \mathcal{C}_{%
\mbox{per}}(Y)$.

\section{Conclusion}

In this work, we have constructed weak solutions of a nonlocal Stokes-Vlasov
system without any assumptions on high-order velocity moments of the initial
distribution of particles. Our approach consisted in applying Schauder's
fixed point theorem to a carefully regularized version of the Stokes-Vlasov
system and passing to the limit by means of compactness arguments. Our
investigation culminated with the homogenization of Stokes-Vlasov system
under generous structural assumptions on coefficients encompassing various
forms of classical behaviors.


\begin{thebibliography}{99}
\bibitem{Amsden} A.A. Amsden, P.J. O'Rourke, T.D. Butler, KIVA-2: a computer
program for chemically reactive flows with sprays, NASA STI/recon technical
report N 89 (1989): 27975.

\bibitem{Besicovitch} A.S. Besicovitch, Almost periodic functions, Cambridge
Univ. Press, 1932.

\bibitem{Bohr} H. Bohr, Almost periodic functions, Chelsea, New York, 1947.

\bibitem{Boudin} L. Boudin, L. Desvillettes, C. Grandmont, A. Moussa, Global
existence of solutions for the coupled Vlasov and Navier-Stokes equations,
Differ. Integral Equ. \textbf{22} (2009) 1247-1271.

\bibitem{Casado} J. Casado Diaz, I. Gayte, The two-scale convergence method
applied to generalized Besicovitch spaces, Proc. R. Soc. Lond. A \textbf{458}
(2002), 2925-2946.

\bibitem{Crous} N. Crouseilles, E. Fr\'{e}nod, S.A. Hirstoaga, A. Mouton,
Two-scale macro-micro decomposition of the Vlasov equation with a strong
magnetic field, Math. Models Methods Appl. Sci. \textbf{23} (2013) 1527-1559.

\bibitem{JMS} H. Douanla, J.L. Woukeng, Almost periodic homogenization of a
generalized Ladyzhenskaya model for incompresible viscous flow, J. Math.
Sci. (N.Y.) \textbf{189} (2013), 431-458.

\bibitem{Eberlein} W.F. Eberlein, Abstract ergodic theorems and weak almost
periodic functions. Trans. Amer. Math. Soc. \textbf{67} (1949) 217-240.

\bibitem{Frenod1} E. Frenod, K. Hamdache, Homogenisation of transport
kinetic equations with oscillating potentials, Proc. Roy. Soc. Edinburgh
Sect. A \textbf{126} (1996) 1247-1275.

\bibitem{Frenod} E. Fr\'{e}nod, E. Sonnendr\"{u}cker, Homogenization of the
Vlasov equation and of the Vlasov-Poisson system with a strong external
magnetic field, Asymptot. Anal. \textbf{18} (1998) 193-213.

\bibitem{Gokhale} P.A. Gokhale, S. Deokattey, V. Kumar, Accelerator driven
systems (ADS) for energy production and waste transmutation: International
trends in R\&D, Progress in Nuclear Energy \textbf{48} (2006) 91-102.

\bibitem{Goudon1} T. Goudon, P.-E. Jabin, A. Vasseur, Hydrodynamic limit for
the Vlasov-Navier-Stokes equations. I. Light particles regime, Indiana Univ.
Math. J. \textbf{53} (2004) 1495-1515.

\bibitem{Goudon2} T. Goudon, P.-E. Jabin, A. Vasseur, Hydrodynamic limit for
the Vlasov-Navier-Stokes equations. II. Fine particles regime, Indiana Univ.
Math. J. \textbf{53} (2004) 1517-1536.

\bibitem{Greenberg} W. Greenberg, M. Williams, P.F. Zweifel, A report on the
seventh international conference on transport theory, Transport Theory and
Statistical Physics \textbf{10} (1981) 115-130.

\bibitem{Jiang} J.-S. Jiang, C.-K. Lin, Weak turbulence plasma induced by
two-scale homogenization, J. Math. Anal. Appl. \textbf{410} (2014) 585-596.

\bibitem{Mellet1} A. Mellet, A. Vasseur, Global weak solutions for a
Vlasov-Fokker-Planck/Navier-Stokes system of equations, Math. Models Methods
Appl. Sci. \textbf{17} (2007). 1039-1063.

\bibitem{Mellet2} A. Mellet, A. Vasseur, Asymptotic analysis for a
Vlasov-Fokker-Planck/compressible Navier-Stokes system of equations, Comm.
Math. Phys. \textbf{281} (2008) 573-596.

\bibitem{Hamdache} K. Hamdache, Global existence and large time behaviour of
solutions for the Vlasov-Stokes equations, Jpn. J. Ind. Appl. Math. \textbf{%
15} (1998) 51-74.

\bibitem{Jikov} V.V. Jikov, S.M. Kozlov, O.A. Oleinik, Homogenization of
differential operators and integral functionals, Springer-Verlag, Berlin,
1994.

\bibitem{Lions1} J.L. Lions, E. Magenes, Probl\`{e}mes aux limites non homog%
\`{e}nes et applications (Vol 1). Paris, Dunod, 1968

\bibitem{SM} S. Mischler, On the trace problem for solutions of the Vlasov
equation, Comm. Partial Differ. Equ. 25(2000) 1415-1443.

\bibitem{NG} G. Nguetseng, A general convergence result for a functional
related to the theory of homogenization, SIAM Journal on Mathematical
Analysis \textbf{20}(1989) 608-623.

\bibitem{Hom1} G. Nguetseng, Homogenization structures and applications I,
Z.\ Anal. Anwen. \textbf{22} (2003) 73-107.

\bibitem{CMP} G. Nguetseng, M. Sango, J.L. Woukeng, Reiterated ergodic
algebras and applications, Commun. Math. Phys. \textbf{300} (2010) 835-876.

\bibitem{Poupaud1} J. Nieto, F. Poupaud, J. Soler, High-field limit for the
Vlasov-Poisson-Fokker-Planck system, Arch. Ration. Mech. Anal. \textbf{158}
(2001)1, 29-59.

\bibitem{Oran} E.S. Oran, J.P. Boris, Numerical simulation of reactive flow,
Cambridge University Press, 2005.

\bibitem{PG} R. Potthast, P. B. Graben, Existence and properties of
solutions for neural field equations, Math. Meth. Appl. Sci. \textbf{33}
(2010) 935-949.

\bibitem{Poupaud2} F. Poupaud, J. Soler, Parabolic limit and stability of
the Vlasov-Fokker-Planck system, Math. Models Methods Appl. Sci. \textbf{10}
(2000) 1027-1045.

\bibitem{NA} M. Sango, N. Svanstedt, J.L. Woukeng, Generalized Besicovitch
spaces and application to deterministic homogenization, Nonlin. Anal. TMA 
\textbf{74} (2011) 351--379.

\bibitem{DPDE} M. Sango, J.L. Woukeng, Stochastic sigma-convergence and
applications, Dynamics of PDEs \textbf{8} (2011) 261-310.

\bibitem{Simon2} J. Simon, On the existence of the pressure for solutions of
the variational Navier-Stokes equations, J. Math. Fluid Mech. \textbf{1}
(1999) 225-234.

\bibitem{SW} N. Svanstedt, J.L. Woukeng, Homogenization of a Wilson-Cowan
model for neural fields, Nonlin. Anal. RWA \textbf{14} (2013) 1705-1715.

\bibitem{Temam} R. Temam, Navier-Stokes equations: Theory and numerical
analysis, North-Holland, Amsterdam, 1984.

\bibitem{NA2014} J.L. Woukeng, Introverted algebras with mean value and
applications, Nonlin. Anal. \textbf{99} (2014) 190-215.

\bibitem{M2AS} J.L. Woukeng, Linearized viscoelastic Oldroyd fluid motion in
almost periodic environment, Math. Meth. Appl. Sci., online in Wiley Online
Library, DOI: 10.1002/mma.3026, 2013.

\bibitem{Deterhom} J.L. Woukeng, Homogenization in algebras with mean value,
arXiv: 1207.5397v1, 2012 (Submitted).

\bibitem{Yau} H.-T. Yau, Relative entropy and hydrodynamics of
Ginzburg-Landau models, Lett. Math. Phys. \textbf{22} (1991) 63-80.

\bibitem{JMPA} C. Yu, Global weak solutions to the incompressible
Navier-Stokes-Vlasov equations, J. Math. Pures Appl. \textbf{100} (2013)
275-293.

\bibitem{Zhikov4} V.V. Zhikov, E.V. Krivenko, Homogenization of singularly
perturbed elliptic operators. Matem. Zametki \textbf{33} (1983) 571-582
(english transl.: Math. Notes, \textbf{33} (1983) 294-300).
\end{thebibliography}
\end{document}